\DeclareMathSymbol{\blacktriangle}{\mathord}{AMSa}{"4E}
\DeclareMathSymbol{\nsubseteq}{\mathrel}{AMSb}{"2A}
\DeclareMathSymbol{\lesssim}{\mathord}{AMSa}{"2E}
\DeclareMathSymbol{\gtrsim}{\mathord}{AMSa}{"26}
\title[]{A Frobenius-Nirenberg theorem with parameter}
\author[]{Xianghong Gong}
 \address{Department of Mathematics,
 University of Wisconsin - Madison, Madison, WI 53706, U.S.A.}
 \email{gong@math.wisc.edu}
 \keywords{Newlander-Nirenberg theorem with parameter, complex Frobenius structures with parameter, analytic family of Frobenius structures,  Nash-Moser rapid iteration}
 \subjclass[2010]{32Q60, 32V05, 52C12}
\newcommand{\dist}{\operatorname{dist}}
\newcommand{\DD}[2]{\frac{\partial #1}{\partial #2}}
\newtheorem{thm}{Theorem}[section]
\newtheorem{cor}[thm]{Corollary}
\newtheorem{prop}[thm]{Proposition}
\newtheorem{lemma}[thm]{Lemma}
\theoremstyle{definition}
\newtheorem{defn}[thm]{Definition}
\newtheorem{exmp}[thm]{Example}
\newtheorem{rem}[thm]{Remark}
\renewcommand{\th}[1]{\begin{thm}\label{#1}}
\newcommand{\eth}{\end{thm}}
\newcommand{\co}[1]{\begin{cor}\label{#1}}
\newcommand{\eco}{\end{cor}}
\renewcommand{\le}[1]{\begin{lemma}\label{#1}}
\newcommand{\ele}{\end{lemma}}
\newcommand{\pr}[1]{\begin{prop}\label{#1}}
\newcommand{\epr}{\end{prop}}
\newcommand{\ps}{the cone property}
\newcommand{\ga}{\begin{gather}}
\newcommand{\ega}{\end{gather}}
\newcommand{\gan}{\begin{gather*}}
\newcommand{\egan}{\end{gather*}}
\newcommand{\al}{\begin{align}}
\newcommand{\eal}{\end{align}}
\newcommand{\aln}{\begin{align*}}
\newcommand{\ealn}{\end{align*}}
\newcommand{\eq}[1]{\begin{equation}\label{#1}}
\newcommand{\eeq}{\end{equation}}
\newcommand{\A}{\mathcal A}
\newcommand{\B}{\mathcal B}
\newcommand{\ci}{~\cite}
\newcommand{\f}[2]{\frac{#1}{#2}}
\newcommand{\mfl}[1]{\lfloor #1\rfloor}
\newcommand{\ple}{\,\lesssim\,}
\newcommand{\cc}{{\bf C}}
\newcommand{\nn}{{\bf N}}
\newcommand{\rr}{{\bf R}}
\newcommand{\ov}{\overline}
\newcommand{\RE}{\operatorname{Re}}
\newcommand{\IM}{\operatorname{Im}}
\newcommand{\rank}{\operatorname{rank}}
\newcommand{\cL}{\mathcal}
\newcommand{\all}{\alpha}
\newcommand{\gaa}{\gamma}
\newcommand{\del}{\delta}
\newcommand{\var}{\varphi}
\newcommand{\e}{\epsilon}
\newcommand{\om}{\omega}
\newcommand{\Om}{\Omega}
\newcommand{\la}{\lambda}
\newcommand{\pd}{\partial}
\newcommand{\re}[1]{(\ref{#1})}
\newcommand{\rea}[1]{$(\ref{#1})$}
\newcommand{\rl}[1]{Lemma~\ref{#1}}
\newcommand{\rp}[1]{Proposition~\ref{#1}}
\newcommand{\rt}[1]{Theorem~\ref{#1}}
\newcommand{\rd}[1]{Definition~\ref{#1}}
\newcommand{\rrem}[1]{Remark~\ref{#1}}
\newcommand{\rla}[1]{Lemma~$\ref{#1}$}
\newcommand{\rpa}[1]{Proposition~$\ref{#1}$}
\newcommand{\rta}[1]{Theorem~$\ref{#1}$}
\newcommand{\db}{\overline\partial}
\newcounter{pp}
\newcommand{\bpp}{\begin{list}{$\hspace{-1em}\alph{pp})$}{\usecounter{pp}}}
\newcommand{\epp}{\end{list}}
\newcounter{ppp}
\newcommand{\bppp}{\begin{list}{$\hspace{-1em}(\roman{ppp})$}{\usecounter{ppp}}}
\newcommand{\eppp}{\end{list}}
\begin{document}
\begin{abstract}
The Newlander-Nirenberg theorem says that a formally integrable complex structure
is locally equivalent to the   complex structure in the complex Euclidean space.
We will show two results about the Newlander-Nirenberg theorem
with parameter. The first   extends  the Newlander-Nirenberg theorem to a parametric version,
and its proof  yields a sharp regularity result as Webster's proof for  the Newlander-Nirenberg theorem.
 The second  concerns a   version of
 Nirenberg's complex Frobenius theorem and its proof yields a result with a mild loss of regularity.
%Such a loss of regularity has also occurred in the local CR embedding   problem.
\end{abstract}

%{\hfill \today}

 \maketitle

\tableofcontents

\setcounter{section}{0}
\setcounter{thm}{0}\setcounter{equation}{0}
\section{Introduction}\label{sec1}
Let $D$ be a domain  in $\rr^N$ and let $S$ be a complex subbundle of   $T(D)\otimes \cc$.
  The rank of
 $S$ over $\cc$ is denoted by $\rank_\cc S$. Assume that $S$ is of class $\cL C^r$.
%By definition,   $S$ is locally spanned by  complex-valued vector fields in $D$ of class $\cL C^r$.
We say
that $S$ is {\it formally integrable} if $X,Y$ are $\mathcal C^1$ sections of $S$ then  the Lie bracket $[X,Y]$
remains a
section of $S$;
%Let $\ov S$ be the complex vector bundle of which the fibers are the complex conjugates of $S_p$.
it   is {\it CR} %on $D\subset\rr^N$
  if %the complex space
  $S+\ov {S}$ is  a complex bundle; and it is {\it Levi-flat} if both $S$ and  $S+\ov S$ are formally integrable.
When $S$ is a CR vector bundle, we  call $\rank_\cc(S+  \ov S)-\rank_\cc S$ the {\it CR dimension} of $S$. When $S$ has CR dimension $0$ additionally, i.e. $\ov S=S$, $S$ is the standard (real) Frobenius structure.
 We first formulate a finite smoothness result analogous to  a theorem of Nirenberg~\ci{Ni57} for the $\cL C^\infty$ case.
\pr{th0}
 Let $S$ be a Levi-flat CR vector bundle of class $\cL C^r$ with $r\in(1,\infty)$ in a domain $D\subset\rr^N$.
For each $p\in D$
 there exist a neighborhood $U$ of $p$ and a diffeomorphism  $F$ in $\cL C^a(U)$ for all $a<r$
 %\in\cL C^{r_-}( U):=\bigcap_{a<r}\cL C^{a}( U)$
    such that $F_*S$  is spanned by
\eq{stcfs}
\pd_{\ov z_1},\dots,
\pd_{\ov z_n}, \ \pd_{t_{1}},\dots\pd_{t_{M}},
%\pd_{x^\all}+\sqrt{-1}\pd_{x^{n+\all}}\  (
%1\leq\all\leq n),\qquad \pd_{x^j}\
%(2n+1\leq j\leq 2n+m).
\eeq
where  $n$ is the CR dimension of $S$, and $M+n$ equals $\rank_\cc S$, the rank of $S$.
\epr

Throughout the paper, $y:=(x,t,\xi)$ are the coordinates of $\rr^N=\rr^{2n}\times\rr^M\times\rr^L$, while $z$    with $z_j=x_j+ix_{n+j}$ are the complex coordinates of $\rr^{2n}$.  We have chosen coordinates such that for the standard Levi-flat structure \re{stcfs}, the $z$ space is     the holomorphic leave space, $(\RE z,\IM z,t)$ space is    the total leave space, and the $\xi$ space is transversal to the total leave space.
%  The existence of such an $F$ is equivalent to
%the existence of  complex-valued  functions $f=(f_1,\ldots, f_{n})$ and  real functions $g=(g_1,\dots, g_L)$ of class  $C^{a_-}( U)$
%  so that $f,g$ are annihilated by sections of $X$, while   the Jacobian matrix of $( f,\ov f, g)$  has
%  rank $N-M$ (see \rp{solmap}). % Here $L=N-M-2n$.
%%Moreover, if $r=\infty$, then $F$ is $\cL C^\infty$.

Let us discuss special cases of \rp{th0}.
%It holds for $r=\infty$, which is called  Nirenberg's  complex Frobenius theorem~\ci{Ni57}.
When the $S$ defines a  real    structure  additionally, i.e., $S=\ov S$,  the result is the classical Frobenius theorem~\ci{Fr77} (see also Hawkins~\ci{Ha13}) and it actually holds  for $F\in \cL C^r$ (see Guggenheimer~\ci{Gu62}, Narasimhan~\ci{Na73}, F. and R. Nevanlinna~\ci{NN73}, and \rp{rfp} below).
When $S$ defines a complex structure, i.e. its CR dimension is $\f{N}{2}$, the result with a better regularity is the well-known  Newlander-Nirenberg theorem ~\ci{NN57};   it is sharp  with
 $F\in \cL C^{r+1}$  when $r\in(1,\infty]\setminus\nn$,
 which is a theorem of Webster~\ci{We89}.

We now formulate results in a parametric version.  We  first define   H\"{o}lder spaces with parameter.
Let  $\nn$ be the set of non-negative integers and let  $I=[0,1]$. For $r\in[0,\infty)$, let $[r]$ denote  the largest integer $\leq r$ and set $\{r\}=r-[r]$.
 When  $D$ is bounded and $0\leq s\leq r\leq\infty$,
 by an element  in
 $\cL C^{r,s}(\ov D )$   we   mean
 a family $\{u^\la;\la\in I\}$ of functions $u^\la$ in $\ov D$  having the property that if $i,j\in\nn$, $i+j\leq r$ with $j\leq s$, then
$\pd_y^i\pd_\la^ju^\la(y)$ are continuous
in $\ov D \times I$ and have bounded $\cL C^{\{r\}}(\ov D)$-norms ($r<\infty$) as $\la$ varies in $I$ and
  bounded $\cL C^{\{ s\}
}(I)$-norms ($s<\infty$) as $y$ vary in $\ov D$.
%Set $\cL C^{\infty, s}(\ov D )=\cap_{r} \cL C^{r, s}(\ov D )$
%and $\cL C^{\infty, \infty}(\ov D )=\cap_{s} \cL C^{\infty, s}(\ov D )$.
We say that a family $\{S^\la\}$ of CR vector bundles in $D$  is of class $\cL C^{r,s}$,  if    each vector $v\in S^{\la_0}_{p_0}$ extends to a vector field
\ga\nonumber
 a_{1}^\la(y)\pd_{y_1}+\dots+ a_{N}^\la(y)\pd_{y_N}, \quad \{a_{j}^\la\}\in \cL C^{r,s}(\ov D).
\end{gather}
%with $\{a_{j}^\la\}\in C^{r,s}(\ov D)$.
Set $\cL C^{r_-,s}(  U )=\bigcap_{a<r}\cL C^{a,s}(  U )$ and  $\cL C^{r_-,s_-}(  U )=\bigcap_{a<r,b<s}\cL C^{a,b}(  U )$.
%, and
%$\cL C^{r_-}(  U\times I ):=\bigcap_{a<r}\cL C^{a}(  U \times I)$.
%Assume that $X_i=\{X_i^ t\}$ are of class $\cL C^{r,s}(\ov D )$ if
% $\{a^ \la_{il}\}\in\cL C^{r,s}(\ov D )$.
%For a CR vector bundle $S$, we call $n=\rank(S)-\rank(S\cap\ov S)$    the {\it CR dimension} of $S$.

Let us first state a result for a family of complex structures.
\pr{pnl} Let $s\in\nn$.
 Let $D$ be a domain in $\rr^{2n}$.
Let $\{S^\la\}$ be a family of
complex structures in $D$ of class $\cL C^{r,s}$ with $r>1$.
For each $p\in D$
 there exist a neighborhood $U$ of $p$ and  a family $\{F^\la\}\in\cL C^{1,0}( U )$  of diffeomorphisms $F^\la$  such that   each $F^\la_*S^\la$ is spanned by $\pd_{\ov{z_1}}, \dots, \pd_{\ov z_n}$.  If $r\not\in\nn$, the
  $\{F^\la\}$ is in $\cL C^{r+1,s}(  U )$ when   $r> s+1$.
If $r,s$ are in $\nn$ and     $ r\geq s+3$, then $\{F^\la\}\in\cL C^{(r+1)_-,s}(  U )$. \epr
The above result for $n=1$, $r\in(0,\infty)\setminus\nn$ and $s\in\nn$   is due to   Bertrand-Gong-Rosay~\ci{BGR14}.  % for the Newlander-Nirenberg theorem.
The main result of this paper is the following.
\th{thm1}
 Let $D$ be a domain in $\rr^N$ with $N\geq2$.
Let $\{S^\la\}$ be a family of
Levi-flat CR vector bundles $S^\la$ in $D$ of class $\cL C^{r,s}$ with $r>1$. Assume that  $S^\la$ have constant CR dimension $n$ and   rank $M+n$.
For each $p\in D$
 there exist a neighborhood $U$ of $p$ and  a family $\{F^\la \}\in\cL C^{1,0}( U )$  of diffeomorphisms $F^\la $   so that  each $F^ \la_*S^\la$ is spanned by \rea{stcfs}. Moreover, the $\{F^\la \}$ satisfies the   following   properties\,$:$
\bppp
\item $\{F^\la \}\in\cL C^{\infty,s}(  U )$,  provided $r=\infty$, and $s\in\{0\}\cup [1,\infty]$.
\item
 $\{F^\la \}\in\cL C^{r_-,s}(  U )$, provided
$(${\it a}$)$ $  r>s+2\in\nn$, or $(${\it b}$)$ $s=0$ and $  r>1$.
  \item
 $\{F^\la \}\in\cL C^{r_-,s_-}(  U )$, provided $(a)$ $ r\geq s+3\geq4$ and $ \{r\}\geq\{s\}>0$,  or $(b)$  $r=s>1$.   \eppp
\eth
Note that \rp{th0} is a special case of ($ii$) with $s=0$.

%
%
%  The loss of derivatives in an arbitrarily small H\"{o}lder exponent has also occurred in the regularity result by  Gong-Webster~\ci{GW12}
%  for the local CR embedding problem, which improves regularity results of Webster~\ci{We89} and Ma-Michel~\ci{MM94}.  However,
%  it is
% unknown if such a loss  is necessary in both cases.
%The above theorem for the $\cL C^{\infty,\infty}$ case
%is due to Nirenberg~\ci{Ni57}.
%%In \rp{nithm}, we use Nirenberg's argument shows that when
%%$r-1>s\in\nn$ there exists $\{F^t\}\in C^{r,s}$ satisfying the conditions in the theorem.
%%Nirenberg reduced the normaliation of complex Frobenius structures to two separate normalizations for the complex structures and the real Frobenius structures. We indicate the loss of regularity in Nirenberg's approach for the finite smooth  case in \rp{nithm}.
% See also Tr{e}ves's book~\ci{Tr92}*{pp. 296-297} for another proof when $s\in\nn$ and $r=\infty$. Treves uses a method in ~\ci{Ni57} combining with the method in Webster~\ci{We89}. We will also use
%these  methods  to prove \rp{nithm},  a weak form of \rt{thm1}.
%%The main purpose of this paper is to provide sharp regularity results for the complex Frobenius structures.

 We will also study an {\it  analytic} family of Frobenius structures.
 We will show that if $a\in(1,\infty)\setminus\nn$,  the real analytic family $\{P^\la\}$ of $\cL C^a$ vector fields in $\rr^2$, defined by
\eq{Plambda}
P^\la =\pd_{x_1}+(|x_2|^a+\la)\pd_{x_2},
\eeq
cannot be transformed into the span of $\pd_{x_1}$ by a family $\{F^\la \}$ of diffeomorphisms with $\pd_\la F^\la \in\cL C^a$.       In  contrast, Nirenberg~\ci{Ni57} showed that an analytic family of $\cL C^\infty$ {\it complex} structures can be transformed into the standard complex structure by an  {\it analytic} family of $\cL C^\infty$  diffeomorphisms. In~\ci{Ni57} Nirenberg raised the question if
 an {analytic}   family of $\cL C^\infty$ complex Frobenius structures can be transformed into the standard complex Frobenius structures by a real analytic family of $\cL C^\infty$ diffeomorphisms. The above example gives a negative answer to a finite smooth version of the analogous question; see \rp{countex} for the proof.
 %It is plausible that the answer to Nirenberg's question should be negative.
However, we do not know a counter-example to Nirenberg's original question.
% We will extend Nirenberg's result to the case of real analytic family of  $C^r$ complex structures for %$r\in(1,\infty)\setminus\nn$.

  The loss of derivatives in an arbitrarily small H\"{o}lder exponent has also occurred in the regularity result by  Gong-Webster~\ci{GW12}
  for the local CR embedding problem, which improves regularity results of Webster~\ci{We89} and Ma-Michel~\ci{MM94}.  However,
  it is
 unknown if such a loss  is necessary in ~\ci{GW12} and our results.
\rt{thm1} for the $\cL C^{\infty,\infty}$ case
is due to Nirenberg~\ci{Ni57}.
%In \rp{nithm}, we use Nirenberg's argument shows that when
%$r-1>s\in\nn$ there exists $\{F^t\}\in C^{r,s}$ satisfying the conditions in the theorem.
%Nirenberg reduced the normalization of complex Frobenius structures to two separate normalizations for the complex structures and the real Frobenius structures. We indicate the loss of regularity in Nirenberg's approach for the finite smooth  case in \rp{nithm}.
 See also Tr{e}ves's book~\ci{Tr92}*{pp. 296-297} for another proof when $s\in\nn$ and $r=\infty$. Treves uses a method in ~\ci{Ni57} combining with the method in Webster~\ci{We89}. We will also use
these  methods  to prove \rp{nithm},  a weak form of \rt{thm1}.
%The main purpose of this paper is to provide sharp regularity results for the complex Frobenius structures.

We now  mention
  several proofs of the Newlander-Nirenberg theorem.
  Nijenhuis and Woolf~\ci{NW63} reduced the required smoothness in~\ci{NN57}.
   Kohn~\ci{Ko63} gave a proof for the $\cL C^\infty$ case based on a solution  to
 the $\ov\pd$-Neumann problem. H\"{o}rmander~\ci{Ho65}  studied the $L^2$ regularity of solutions of first-order differential operators defined by the complex Frobenius
structures that satisfy  a convexity condition.
 Malgrange~\ci{Ma69} proved the Newlander-Nirenberg theorem
  by a reduction to partial differential equations with real analytic coefficients,  for which the
  quasi-linear elliptic theory applies.
 Newlander-Nirenberg~\ci{NN57} and  Nijenhuis-Woolf~\ci{NW63}  proved a parametric  version of Newlander-Nirenberg theorem.   Hill-Taylor~\cites{HT03, HT07}  proved the Newlander-Nirenberg type theorem for complex Frobenius structures of less than $\mathcal C^1$ smoothness.
We note that  Nirenberg's complex Frobenius theorem   has an application  in Kodaira-Spencer~\ci{KS61}. Furthermore, associated to the vector fields \re{stcfs} there is a natural   complex differential $\cL D:=\db+d_t$, which acts on exterior differential forms in $\cc^n\times\rr^M$ and satisfies $\cL D^2=0$.
Hanges and Jocobowitz~\ci{HJ97} proved the interior $\cL C^\infty$ regularity of $\cL D$ equations for smoothly bounded domains in $\cc^n\times\rr^M$.
%For fundamental solutions of linear partial differential equations with constant coefficients depending on parameters, see works of Tr\`{e}ves~\ci{Tr62a},~\ci{Tr62b}, and Mantilik~\ci{Ma91}.

Our proofs are motivated by Webster's  methods~\cites{We89, We89a} of  homotopy
formulae.
The proof of the first part of \rp{pnl},  i.e. for the case of  $s\in\nn$ and $r\in(s+1,\infty)\setminus\nn$, relies   on two ingredients. The first is the gain of one derivative in the interior estimate of Koppelman-Leray homotopy formula and the second is a KAM rapid convergence. Both are adapted from the work of Webster~\ci{We89}. We also need improved iteration methods    in  Gong-Webster~\cites{GW10, GW12}.
  For the general complex Frobenius structures,  we use a homotopy formula due to Tr{e}ves~\ci{Tr92}, which is a combination of   the Poincar\'e lemma and
    the Bochner-Martinelli and Koppelman-Leray  formulae.
However, the estimates for this homotopy formula do not gain any derivative; see~\cite{Tr92} and section~\ref{sec5-}. In our proofs   the chain rule is indispensable. The chain rule requires us to take less derivatives in parameter and thus to use the spaces $\cL C^{r,s}$; see~\rrem{rsrem}.  The space $C_*^{r,s}$, defined in section~\ref{sec2}, is not suitable for our proof of \rt{thm1} and this is illustrated by the negative results, Propositions~\ref{countex} and \ref{cex},  on the real analytic family of vector fields \re{Plambda}.

The paper is organized as follows.
In section 2, we describe   H\"{o}lder spaces and give an initial
normalization for the complex Frobenius structures.  In section 3,
we describe   some sharp interpolation inequalities in  H\"{o}lder norms. For the exposition purpose,  the proofs of these inequalities are given in the appendix.
In section 4 we   adapt a proof of the Frobenius theorem in F.~and R.~ Nevanlinna~\ci{NN73}
for a parametric version and we  provide  examples to discuss the exact regularity of \rt{thm1}. In section 5, we recall the homotopy formula for $\cL D$ by  Tr{e}ves~\ci{Tr92} and  adapt Webster's interior estimates~\ci{We89} for the homotopy operators for forms depending on a parameter.  In section 6 we prove \rp{pnl}.  In section 7, we derive a differential complex arising from a complex Frobenius structure and apply Tr{e}ves's homotopy formula to describe a general rapid iteration procedure to be used in section~\ref{sec7}.   In section~\ref{sec7},  using a Nash-Moser smoothing technique, we prove the main result.

\setcounter{thm}{0}\setcounter{equation}{0}
\section{H\"{o}lder spaces and an initial normalization
}\label{sec2}

In this section, we  discuss properties of the H\"{o}lder spaces mentioned in the introduction.   The reader can find applications of  these spaces in Nijenhuis and Woolf~\ci{NW63}, Bertrand-Gong~\ci{BG14}, and Gong-Kim~\ci{GK14}.
We also obtain an initial normalization of complex Frobenius structures at a given point $p$.

  Recall that $z_\all=x_\all+ix_{n+\all}$, and that $y=(\RE z,\IM z,t,\xi)$ are coordinates of $\rr^N=\rr^{2n}\times\rr^M\times\rr^{L}$.
Let $k,j$ be non negative integers and let $\all,\beta\in[0,1)$.  Let $D$ be a bounded domain in $\rr^N$ and $I$ be a finite and closed
 interval in $\rr$.
The   $\cL C^{k+\all}(\ov D)$ is the standard H\"{o}lder space  with norm
$|\cdot|_{\ov D;k+\all}$.
For  a family  $u:=\{u^\la\}$ of functions $u^\la $ in $\ov D$, by $u %\{u^ t\}
\in{\cL C}_*^{k+\all, j+\beta}(\ov D)$  we mean that
$\pd_y^ m \pd_\la^iu^\la(y)$  are
  continuous in $\ov D\times I $  and have bounded $\cL C^{\all}$ norms on $\ov D$ as $\la$ varies in $I$  and  bounded $\cL C^{\beta}$ norms
 on $I$ as $y$ vary in $\ov D$, for all $ m \leq k$ and $i\leq j$.
  We will take $I=[0,1]$ unless specified otherwise.
  Throughout the paper $\pd^k$ denotes the set of partial derivatives of order $k$ in $y\in\rr^N$, and  ${\mathbf B}^M_\rho$ denotes the ball  in $\rr^M$ of radius $\rho$ centered   at the origin.
% $C_{a}$ denotes a constant depends on an upper bound of $a$.

%For  non-negative integers $k, j$ and $\all,\beta\in[0,1)$,
The norm on  $\cL C^{k+\all,j+\beta}_*(\ov D )$  is defined  by
 \ga
 \nonumber
|u|_{D;k+\all,j+\beta }=\max_{  0\leq i\leq j,\ell\leq k}
\{\sup_{\la\in I}|\{\pd^i\pd_\la^\ell u^\la\} |_{\ov D;\all},\ \sup_{y\in \ov D} |\{\pd_y^i\pd_ \bullet^Iu^ \bullet\} |_{ I;\beta}\}.
\end{gather}
Assuming further that   $k\geq j$,  we define
\ga
\cL C^{k+\all,j+\beta}(\ov D)= \bigcap_{i=0}^j{\cL C}_*^{k-i+\all, i+\beta}(\ov D),\nonumber
\\
\nonumber %   \label{defudk}
 \|u\|_{D;k+\all,j+\beta}=\max\{|u|_{D;k-i+\all, i+\beta}\colon 0\leq i\leq j\}.
  \end{gather}
%Obviously, we have
%$\cL C^s(\ov D )\subset\cL C^{r,s}(\ov D)\subset\cL C^r(\ov D )$, which  are the same if and only if $s=r$.
When the domain $D$ is clear in the context, we   abbreviate
$|\cdot|_{D;r}$,  $|\cdot|_{D; r,s}$, $|\cdot|_{D;r,s}$,  $\|\cdot\|_{D;r,s}$ by $|\cdot|_{r}$, $|\cdot|_{r,s}$, and $\|\cdot\|_{r,s}$, respectively. Note that the spaces $\cL C_*^{a,b}, \cL C^{r,s}$ are the same as $\cL B_*^{a,b},\cL B^{r,s}$  in~\ci{BG14}.

To simplify notation,  let $\pd_{\ov\all}=\pd_{\ov z_\all}, \pd_{\all}=\pd_{z_\all}$,
$\pd'_ m =\pd_{t_{ m }}$, and $\pd_\ell''=\pd_{\xi_{\ell}}$.
%To normalize $\{S^\la \}$ near a point $p\in D$, we   assume that $p$ is the origin and require that
%all changes of   coordinates   fix the origin.
Throughout the paper,
%by $A\ple B$ we mean that $A\leq CB$ for some constant $C$. To simplify expressions,
repeated lower indices are summed over. The Greek letters $\all,\beta,\gamma$ have the range $1,\dots, n$, and indices $\ell, \ell'$ have the range $1,\dots, L:=N-M-2n$ and the indices $m,m'$ have the range $1,\dots, M$.
%By definition,
%$S_p$  and $S_p+ \ov S_p$ have constant ranks $K+n$ and $K+2n$ over $\cc$,
%respectively. Then $S_p\cap\ov S_p$ has
%constant rank $K$  over $\cc$.
\le{intialcod} Let $ r\geq \max\{1,s\}$.
Let $\{S^\la \}\in \cL C^{r,s}(\ov D)$ be a family of CR vector bundles in a domain $D$ of $\rr^N$ with constant rank $M+n$ and CR dimension $n$.
 For each $p\in D$,  there exists a family   of affine transformations $\var^\la $ of $\rr^N$, sending $p$ to the origin,
  so that
  the coefficients of $\var^\la$ are in $\cL C^s(I)$ in $\la$,  while the fiber at the origin of each $\var^\la _*S$ is spanned by $
\pd'_{ m }, \pd_{\ov \all}$ with $1\leq  m \leq M$, $1\leq\all\leq n$. Consequently,
 near the origin of $\rr^N$, $\var^\la _*S^\la $ has a unique  basis
\al \label{xiza1}
X^\la _{ m }&=\pd'_{ m }+ \RE (B^\la _{ m \beta} \pd_\beta)+b^\la _{ m  \ell}\pd_{\ell}'',  \qquad 1\leq  m \leq M,\\ Z^\la _{\ov\all}&=\pd_{\ov\all}+ A^\la _{\ov\all \beta}\pd_\beta+a^\la _{\ov\all \ell}\pd_{\ell}'', \ \ \  \qquad\qquad
  1\leq \all\leq n,\label{xiza2}
 \end{align}
where $a^\la _{\ov\all \ell}, A^\la _{\ov\all\beta}, B^\la _{ m \beta}$ are complex-valued,   $b^\la _{ m  \ell}$ are real-valued, and they
vanish at the origin.
\ele
\begin{proof}
 It is convenient to work with $K^\la $, the set of complex-valued $1$-forms in $\rr^N$ that
annihilate $S^\la $.  We are normalizing the linear part of $S^\la $ at a point $p$. We may assume that $p=0$ and for simplicity   $ K^\la ,S^\la $ denote the fibers of $K^\la ,S^\la $ at the origin.
%Then $S_*^\la $ has  a basis $\omega_\all^\la , \theta_ m ^\la $ ($1\leq \all\leq n$, $1\leq  m \leq L$) satisfying the follows: the $\theta_ m ^\la $ are real-valued and there are real-valued $1$-forms  $\eta_m^\la \, (1\leq m\leq M)$ such that $\omega_\all^\la , \theta_ m ^\la ,\ov{\om_\all^\la }, \eta_m^\la $ form a basis of $S^\la _*$, while  the  coefficients of the $1$-forms  are $\cL C^{s}$ functions in $[0,1]$.

In the following, all basis  are over $\cc$.  Note that $L=\dim_\cc K^\la \cap\ov{K^\la }$.
%For each $\theta\in K^\la \cap\ov{K^\la }$, $\RE\theta$ and $ \IM\theta$ are still in $K^\la \cap\ov{K^\la }$. Thus
For $\la$ near a given value,   $K^\la \cap\ov{K^\la }$ has
a basis    of real $1$-forms  $\eta_1^\la ,\dots, \eta_L^\la $  of   which the   coefficients  are $\cL C^s$ functions in $\la$. We can also   find $ \om_{\all}^\la $,  $1\leq\all\leq n$,   so that   $\{\om_\all^\la ,\eta_ \ell ^\la \}$ is a basis of
$K^\la $.
Then   $\{\om_\all^\la ,\eta_ \ell ^\la , \ov\om_\all^\la \}$ is a basis of
$K^\la +\ov {K^\la }$.  Finally we complete them with real forms $\theta_1^\la ,\dots, \theta_M^\la $ so that
 $\{\om_\all^\la ,\eta_\ell ^\la , \ov\om_\all^\la ,\theta_m^\la \}$ is a basis of
$T_0^*\rr^N\otimes\cc$. All coefficients of the forms are  of class $\cL C^s$ for $\la$ near a given value.

We cover $I$ by intervals $I_1,\dots, I_k$ which are either open or of the forms $[0,a), (b,1]$ so that $I_j$ contains $[\la_{j-1},\la_j]$, with $\la_0=0$ and $\la_k=1$. Furthermore, in $I_j$ there are bases
$$\{\eta_{j\ell }^\la \},\quad \{\eta_{j \ell }^\la ,\om_{j\all}^\la \},\quad \{\eta_{j \ell }^\la ,\om_{j\all}^\la ,\ov{\om_{j\all}^\la }, \theta_{jm}^\la \}$$
for  $K^\la \cap \ov{K^\la }$, $K^\la $,  and $T_0^*\rr^N\otimes\cc$, respectively. % Here $t_0=1$ and $t_k=1$.
%Moreover,
%  the coefficients of the $1$-forms are $\cL C^{s}$ functions in $t\in I_j$, and $\theta_{j, m }^\la , \eta^\la _{j,\all}$ are real-valued.
 We also assume the coefficients of these forms are in $\cL C^s(I_j)$. Next, we match the bases at the end points of $[\la_{j-1},\la_j]$.  At $\la=\la_1$, we have
\gan
\eta^{\la_1}_{1 \ell }=A_{ \ell   \ell'}\eta^{\la_1}_{2\ell '}, \quad \om^{\la_1}_{1\all}=B_{ \all \beta}\om^{\la_1}_{2 \beta}+b_{ \all\ell }\eta^{\la_1}_{2\ell },\\
 \theta^{\la_1}_{1m}=c_{mm'}\theta^{\la_1}_{2m'}+\RE\{C_{m\all}\om_{2\all}^{\la_1}\}+d_{m\ell}\eta_{2\ell }^{\la_1}.
\end{gather*}
 Let us replace $\theta_{2m }^\la $, $\om_{2\all}^\la ,\eta_{2\ell}^\la $ by  the above linear combinations in which $\la_1$ is replaced by  the variable $\la\in I_2$. We repeat  the procedure  for $I_3,\dots, I_p$ successively. Thus we may assume that at the origin of $\rr^N$
$$
\eta_{j\ell }^{\la_{j}}=\eta_{(j+1) \ell }^{\la_{j}}, \quad \om_{j\all}^{\la_{j}}=\om_{(j+1)\all}^{\la_{j}},\quad \theta_{jm}^{\la_{j}}=\theta_{(j+1)m}^{\la_{j}}.
$$
%In other words, we have matched the bases  at points $t_j$.
We have obtained continuous bases that are piecewise smooth in class $\cL C^s$.  Using a partition of unity for $I$,  we find  bases $\{\eta_{ \ell }^\la \}$, $\{\eta_{ \ell }^\la ,\om_{ \all}^\la \}$,    $\{\eta_{\ell }^\la ,\om_{ \all}^\la ,\ov{\om_{ \all}^\la }, \theta_{m}^\la \}$ of $(S^\la)^* \cap \ov{(S^\la )^*}$, $(S^\la)^*$,    and $T_0^*\rr^N\otimes\cc$,
respectively. Moreover,
  the  coefficients of the bases  are  in $\cL C^{s}(I)$, and $\theta_{  m }^\la ,\eta^\la _{\ell}$ remain   real-valued.

We   express uniquely
 $$
 \om^\la _\all=df^\la _\all, \quad \theta_m^\la =dg_m^\la , \quad\eta_\ell ^\la =dh_\ell ^\la ,
 $$
where $f^\la _\all,g_ m ^\la ,h_\ell^\la $ are linear functions in $(\RE z,\IM z, t,\xi)$.
% of which the coefficients are $\cL C^s([0,1])$.
  Let
$$
\var^\la \colon \hat z_\all=f_\all^\la (z,t,\xi), \quad \hat t_{m}=g_m^\la (z,t,\xi), \quad \hat \xi_ \ell =h^\la _ \ell (z,t,\xi).
$$
Then $\{\var^\la \}$  is $\cL C^{\infty,s}$ near the origin, and at the origin   the pull-backs of
  $d\hat z_\all,d\hat t_ m $ via $\var^\la $ are $\om^\la _\all$  and $\theta_ m ^\la $, respectively.
Using the new linear coordinates, we have found, with possible new linear combination, bases
 \aln
 \om_\all^\la &=\tilde A_{\all\beta}^\la dz_\beta+\tilde B^\la _{\all\beta}d\ov z_\beta+\tilde C_{\all m}^\la \, dt_m+\tilde D_{\all \ell }^\la d\xi_ \ell ,\\
 \eta_\ell ^\la &=\RE\{\tilde a_{\ell \all}^\la dz_\all\}+\tilde c_{\ell  m}^\la \, dt_{m}+\tilde d_{\ell   \ell'}^\la d\xi_{\ell'}.
 \end{align*}
 %Here $\tilde c^\la ,\tilde d^\la $ are real-valued.
 At the origin, $(\tilde A_{\all\beta}^\la )$ and $(\tilde d_{ \ell  \ell'}^\la )$ are identity matrices and all other coefficients
 vanish.    Near the origin, the vector fields annihilated by the forms have a unique basis of the form \re{xiza1}-\re{xiza2}.
 The initial normalization is achieved.
\end{proof}

 Following Webster~\ci{We89}, we call $\{X_m^\la ,Z_\all^\la \}$  an {\it adapted frame} of $S^\la $. The adapted frame will be useful to reformulate the integrability via differential forms for the $ d_t+\db$ complex.
% Near the origin,  the integrability condition becomes
%\eq{}
%[Z^\la _{\ov\all},Z^\la _{\ov\beta}]=0, \quad [X^\la _i,X^\la _j]=0, \quad [\ov Z_\all^\la ,X_j]'=0.
%\eeq
%while the Levi-flat  condition becomes
%\eq{}
%[Z^\la _\all,\ov Z^\la _{\beta}]=0\mod(Z^\la _\gaa,\ov Z^\la _\del).
%\eeq

Another consequence of the proof of \rl{intialcod} is to  identify suitable homogeneous solutions for $\{S^\la \}$ with the normalization of $\{S^\la \}$.
\pr{solmap} Let $D$ be  a domain in $\rr^N$ and let $p\in D$.
 Let $\{S^\la \}$ be a family of CR vector bundles  in $D$ of class $\cL C^{r,s}$ with $r\geq\max\{1,s\}$. Suppose that $S^\la $ have rank $n+M$  and  CR dimension $n$. Let $L=N-2n-M$. Suppose that for each $\la_0\in I$ there are an open interval $J$ with $\la_0\in J$ or $\la_0\in\ov J$ for $\la_0=0,1$, a neighborhood $U$ of $p$, complex-valued  functions $f^\la =(f_1^\la ,\ldots, f_{n}^\la )$, and  real functions $h^\la =(h_1^\la ,\dots, h_L^\la )$ for $\la\in \ov J$  so that $\{(f^\la ,h^\la );\la\in J\}$ are in $\cL C^{r,s}( U)$,  and $f^\la ,h^\la $ are annihilated by sections of $S^\la $. Assume further that the Jacobian matrix of $( f^\la ,\ov {f^\la }, h^\la )$ has
  rank $2n+L$ at $p$.  Then there exist a neighborhood $V$ of $p$ and
$\{\var^\la ; \la\in I\}\in \cL C^{r,s}(\ov V)$
such that each  $\var^\la $ is a diffeomorphism  defined in $V$ and  $\var^\la _*(S^\la )$ is spanned by
$$
\pd_{\ov 1}, \dots,\pd_{\ov n}, \quad \pd'_1,\dots, \pd'_M.
$$
\epr
\begin{proof}
Let   $\om_\all^\la $ and $\eta^\la _\ell $ be the linear forms such that at the origin of $\rr^N$,  $\om_\all^\la =df_\all^\la $ and $\eta^\la _ \ell  =dh_ \ell  ^\la $.   By an argument similar to the proof of \rl{intialcod}, we can   find  linear real $1$-forms
 $\theta^\la _1,\dots,\theta^\la _M$ with  coefficients in $\cL C^s([0,1])$  so that $\om_\all^\la , \ov{\om_\all^\la }, \theta_m^\la ,\eta_ \ell  ^\la $ form a basis of $T_0^*\rr^N\otimes\cc$. Note that when $s\leq r<s+1$, the coefficients of $\om_\all^\la$ and $\eta_\ell^\la$ are $\cL C^{r-1}$ in $\la$. We first find $\theta_m^\la$ of which the coefficients are of $\cL C^{r-1}$ in $\la$. Then by an approximation, we can replace them by forms of which the coefficients are $\cL C^s$ in $\la$.
 Since $
 \theta^\la _m$ is linear, then
 $\theta^\la _m=dg_m^\la $ for a linear function $g_m^\la $.
Define   %  $\var^\la =(f^\la ,g^\la ,h^\la )$ % on $\cc^n\times\rr^M\times\rr^L$
%by
$$
\var^\la\colon z=f^\la (\hat z,\hat t,\hat \xi), \quad t=g^\la (\hat z,\hat t,\hat \xi), \quad \xi=h^\la (\hat z,\hat t,\hat \xi).
$$
Then $\var^\la _*S^\la $ is spanned by $\partial_{\ov\all},\pd'_ m $.
\end{proof}

%We now clarify the role of parameters. Given a family of complex Frobenius structures $S^t$ in $D\subset\rr^N$, we can also regard it as a complex Frobenius structure $S$ in $D\times[0,1]$. For a family $\{F^t\}$ of diffeomorphisms $F^t$ defined on $D$,   $F(x,t)=(F^t(x),t)$ defines a diffeomorphism on $D\times[0,1]$.
%\pr{}Let $\{S_i^t\}$, $i=1,2$ be two families of complex Frobenius structures in $D\subset\rr^{N_1}\times\rr^{N_2}$. Suppose that $0\in D$.  Suppose that all $S_i^t$ are tangent to $p'\times\rr^{N_2}$ for $p'$ near the origin $0\in\rr^{N_1}$. If there is a neighborhood $\om$ of the origin in $\rr^N$ such that $S_1$ and $S_2$ are equivalent by a diffeomorphism $F=(F_1,F_2, F_3)$ with $F_j\colon \om\times[0,1]\to\rr^{N_j}$ and $F_j(0,\la)=0$
%for $j=1,2$ and $F_3\colon \om\times[0,1]\to[0,1]$, then $(x_1,x_2)\to (F^1(x_1,x_2,\la),x_2)$ defines
%a family $\{G^\la\}$ of diffeomorphisms  transforming $\{S_1^\la\}$ into $\{S_2^t\}$.
%\epr

In the definition of  $\cL C^{r,s}$ spaces,   \rl{intialcod},
 \rp{solmap} and some propositions in the paper,  it suffices to require that $r\ge \max\{1,s\}$ without the restriction of $\{r\}\geq\{s\}$.
In this paper,  the use of the chain rule is indispensable and thus it restricts the exponents $r,s$ as shown by the following.
\begin{rem} \label{rsrem}  Let $U,V$ be bounded domains in Euclidean spaces. Suppose that there is a constant $C>0$ so that two points $p,q$ in $\ov U$ can be connected by a piecewise smooth curve of length at most $C|p-q|$. Suppose that
$G^\la$ map $\ov U$ into $\ov V$.  If  $\{f^\la\}\in\cL C^{r,s}(\ov {V} )$ and
$\{G^\la\}\in\cL C^{r,s}(\ov U)$
 then $\{f^\la\circ G^\la\}\in\cL C^{r,s}(\ov U )$, provided \eq{rsrs}
 \text{   $s=0$;  or $1\leq s \leq r$ and $\{s\}\leq\{r\}$}.
 \eeq
  \end{rem}

Assume   that $r$ and $s$ satisfy \re{rsrs}. Let $\{S^\la \}\in \cL C^{r,s}$  be as in \rl{intialcod}.
   Then the adapted frame
$\{X_m^\la ,Z_{\ov\all}^\la \}$ are in $\cL C^{r,s}$.
Since the coefficients $A^\la ,B^\la ,a^\la ,b^\la $ in the  frame
  vanish at the origin,   by dilation $(z,t,\xi)\to \del (z,t,\xi)$    we achieve
\eq{initiale}
\|\{(A^\la ,B^\la ,a^\la ,b^\la )\}\|_{B_1;r,s}\leq
\e
\eeq
for a  given $\e>0$.

Throughout the rest of paper, we assume that $r,s$ satisfy \re{rsrs}.
% In particular,
%we will not deal with  case
%$0<s<1$.

\setcounter{thm}{0}\setcounter{equation}{0}
\section{Preliminary and interpolation inequalities
}\label{sec3}

The main purpose of this section is to acquaint  the reader  with some interpolation inequalities for H\"{o}lder norms on domains with the cone property; see the appendix for the definition of the cone property.
 The reader is referred to  H\"{o}rmander~\ci{Ho76} and Gong-Webster~\ci{GW11} for the proof of these inequalities.
The parametric version of those inequalities are proved in  appendix A.

If $D, D_1, D_2$ are domains of \ps,  we have
\gan
|u|_{D;(1-\theta)a+\theta b}\leq
C_{a,b}   |u |_{D;a}^{1-\theta} |u |_{D;b}^{\theta },\label{A1-theta}
\\
 |f_1 |_{a_1+b_1} |f_2 |_{a_2+b_2}\leq C_{a,b} (|f_1|_{a_1+b_1+b_2} |f_2 |_{a_2}+ |f_1 |_{a_1} |f_2 |_{a_2+b_1+b_2}).\label{Af1f2}
\end{gather*}
  See~\cite{Ho76}*{Theorem~A.5} and~\cite{GW11}*{Proposition~A.4}.
   Here $ |f_i |_{a_i}= |f_i |_{D_i;a_i}$.  Let  $D_i$ be finitely many domains of the cone property. If $ |u_i |_{a_i}= |u_i |_{D_i;a_i}$,  then
\ga
\nonumber % \label{Aestprod}
\prod_{j=1}^m |u_j |_{b_j+a_j}\leq C^m_{a,b}\sum_{j=1}^m |u_j |_{b_j+a_1+\dots+a_m}\prod_{i\neq j} |u_i |_{b_i}.
\end{gather}

To deal with two H\"{o}lder exponents in $y,\la$ variables, we introduce
 the following notation
\ga
\nonumber |f|_{D;a,0}\odot |g|_{D';0,b}:=|f|_{D;a,0}|g|_{D';0,[b]}+
 |f|_{D;[a],0}|g|_{D';0,b},\\  \nonumber %  \label{Aqrs}
Q^*_{D,D';a,b}(f,g) := |f|_{D;a,0}\odot |g|_{D';0,b}+ |f|_{D;0,b}\odot|g|_{D';a,0}, \\
\nonumber |g|_{D';0,b}\odot|f|_{D;a,0}=|f|_{D;a,0}\odot |g|_{D';0,b}\\
\label{Aqrsfg}
Q_{D,D';r,s}(f,g):=\sum_{0\leq k\leq \mfl{s}} Q^*_{D,D';r-j,j+\{s\}}(f,g),\quad [r]\geq [s],\\
\hat Q_{D,D';r,s}(f,g):=\|f\|_{D;r,s}|g|_{D';0,0}+
|f|_{D;0,0}\|g\|_{D';r,s}+Q_{D,D';r,s}(f,g).\nonumber%+|f|_{r,0}|g|_{0,s}+|f|_{s,0}|g|_{0,r}.
\end{gather}
We will write $Q^*_{D,D';a,b}$ as $Q^*_{a,b}$, when the domains are clear from the context, and the same abbreviation applies to $Q$ and $\hat Q$. From \rl{ff-}, we have
\le{Aff-} For each $i$,  let $D_i$  be a  domain in $\rr^{n_i}$ with the cone property and let $a_i,b_i,r_i,s_i $ be non-negative real numbers.
Assume that   $(r_1,\dots,r_m, a_1,\dots, a_m)$ or $(s_1,\dots,s_m, b_1,\dots, b_m)$ is in $\nn^{2m}$, or  at most one of  $(r_1,s_1), \dots, (r_m,s_m)$
is not in $\nn^2$. Then
\aln
%\label{Aff-a}
&\prod_{i=1}^m|f_i|_{D_i;r_i,s_i}  %|f_1|_{r_1,s_1}\cdots |f_m|_{r_m,s_m}
\leq C^m_{r,s}\left\{\sum_i|f_i|_{r,s}\prod_{\ell\neq i}|f_\ell |_{0,0}
+ \sum_{ i\neq j}Q^*_{r,s}(f_i,f_j)\prod_{\ell\neq i,j}|f_ \ell|_{0,0}
\right\},\\
%\label{Aff-b}
&\prod_{i=1}^m|f_i|_{D_i;r_i+a_i,s_i+b_i}
%|f_1|_{r_1+a_1,s_1+b_1}\cdots |f_m|_{r_m+a_m,s_m+b_m}
\leq C^m_{r,s,a,b}\left\{\sum_i|f_i|_{r_i+a,s_i+b}\prod_{l\neq i}|f_ \ell |_{r_ \ell ,s_ \ell }\right.
\\
&\hspace{10em}
+ \left.\sum_{ i\neq j}|f_i|_{r_i+a,s_i}|f_j|_{r_j,s_j+b} \prod_{\ell\neq i,j}|f_ \ell|_{r_ \ell ,s_ \ell }
\right\}.\nonumber
\end{align*}
Here   $a=\sum a_i$,  $b=\sum b_i$, etc..
Assume further that $r_i\geq s_i$ for all $i$. Then
\ga\nonumber %  \label{Aff-c}
\|f_1\|_{D_1;r_1,s_1}\cdots \|f_m\|_{D_m;r_m,s_m}\leq C^m_{r,s}\sum_{i\neq j}\hat Q_{r,s}(f_i,f_j)\prod_{ \ell \neq i,j}|f_ \ell |_{0,0}.
\end{gather}
\ele

Let $D_\rho=B_\rho^{2n}\times B_\rho^{M}\times B_\rho^L$, and set $\|\cdot\|_{\rho;r,s}=\|\cdot\|_{D_\rho;r,s}$.
Throughout the paper  $s_*=0$ for $s=0$,  and $s_*=1$ for $s\geq1$.
% Define
%\ga
%\hat Q'_{r,s}(f,g)=\|f\|_{1+\{s\},\{s\}}\|g\|_{r,s}+|f|_1\|g\|_{r,s-1}+
%\|g\|_{1+\{s\},\{s\}}\|f\|_{r,s}\\
%+|g|_1\|f\|_{r,s-1}+Q'_{r,s}(f,g).
%\end{gather}
By \rp{pgrsf}, we have
\pr{Agrsfp}
 Let    $0<\rho<\infty$, $ 0<\theta<1/4$ and $\rho_j=(1-\theta)^j\rho$.
Let $F^\la =I+f^\la $ map $D_\rho$ into $\rr^N$.
Assume that $f\in\cL C^{1,0}(
 {D_{\rho}})$ and
\eq{ft0m} \nonumber %
f^\la (0)=0, \quad  |\{\pd f^\la \} |_{\rho;0,0}\leq \theta/{C_N}.
\eeq
 Then $F^\la \colon D_{\rho}\to D_{(1-\theta)^{-1}\rho}$ are injective. There are $G^\la=I+g^\la$ satisfying $g^\la (0)=0$ and
\ga\nonumber
G^\la \colon D_{\rho_1}
\to D_{\rho}, \quad F^\la\circ G^\la=I  \quad \text{on $D_{\rho_1}$},\\
G^\la\circ F^\la=I \quad \text{on $D_{\rho_2}$}.\nonumber
\end{gather} Assume further that $r,s$ satisfy \rea{rsrs},
$1/4<\rho<2$,   and   $|f|_{\rho;1,s_*}\leq1$.  Then
\ga\label{Agrsf} \nonumber %
\|g\|_{\rho_1;r,s}\leq C_r\left\{\|f\|_{\rho;r,s}+\|f\|_{\rho;s+1,s}\odot\|f\|_{\rho;r,s_*}\right\},\\
\label{AuF-1}\|\{u^\la \circ (F^\la )^{-1}\}\|_{\rho_1;r,s}\leq C_r\left\{|u |_{\rho;1,s_*} \|f\|_{\rho;r,s}+\|u\|_{\rho;s+1,s}\odot\|f\|_{\rho;r,s_*}\right.\\
  \left.\hspace{6em} +\|u\|_{\rho;r,s}+|u|_{\rho;1,0}\|f\|_{\rho;s+1,s}\odot\|f\|_{\rho;r,s_*} +\|u\|_{\rho;r,s_*}\odot\|f\|_{\rho;s+1,s}\right\}.\nonumber
\end{gather}
\epr
By \rp{uFF}, we have the following.
\pr{AuFF}Let $F_i^\la =I+f_i^\la \colon D_i\to D_{i+1}$.  Assume that $D_1,\dots, D_{m+1}$ have the cone property of which $C_*(D_i)$ are independent of $i$ $($see Appendix A for notation$)$ and
$$
f_i^\la (0)=0,\quad   |f_i|_{D_i;1,s_*} \leq 1.
$$
Let $\|f_i\|_{r,s}=\|f_i\|_{D_i;r,s}$. Suppose that $r,s$ satisfy \rea{rsrs}. Then
 \aln
&\|\{u^\la \circ F_m^\la \circ\cdots\circ F_1^\la \}\|_{D_1;r,s}\leq C_r^{m}
 \{
\|u\|_{D_{m+1};r,s}+ |u|_{D_{m+1};1,0} \sum_{i\leq j} \|f_i\|_{s+1,s}\odot\|f_j\|_{r,s_*} \\
&\quad +
\sum_i(|u|_{D_{m+1};1,s_*} \|f_i\|_{r,s}
+\|u\|_{s+1,s}\odot\|f_i\|_{r,s_*}+\|u\|_{r,s_*}\odot\|f_i\|_{s+1,s})\}.
\end{align*}
\epr

The proof of the following lemma is straightforward.
\le{Ask2k}Let   $0\leq\theta_k^*\leq\f{1}{(2+k)^2}$.
Then
$$
\prod_{k=0}^\infty(1-\theta_k^*)>1/2. %, \quad \prod(1+\theta_k)\leq e.
$$
Let $\rho_{k+1}=(1-\theta_k^*)\rho_k$, $\rho_\infty=\lim_{k\to\infty}\rho_k >0$.  Suppose that
$
F_k( D_{\rho})\subset D_{(1-\theta_k^*)^{-1}\rho}$
  for each $\rho\in(0, \rho_k)$.
 Then
$
F_i\circ\cdots\circ F_0(D_{\rho_\infty/2})\subset D_{\rho_\infty}.
$
\ele

\setcounter{thm}{0}\setcounter{equation}{0}
\section{Frobenius theorem with parameter and examples}\label{sec4}

 There are several ways to prove the classical  Frobenius theorems. In the literature there seems to  lack   examples showing exact regularity for the various types of Frobenius theorems.
 In the end of this section, we will provide some examples showing that some regularity results in this paper are almost sharp.
%In particular, we will construct an analytic family of Levi-flat CR subbundles that do not admits a real analytic family of CR functions.

First we reformulate the proof of Frobenius's  theorem in  F.~and R. Nevanlinna~\cite{NN73} for the parametric case.
 The reader can compare it with    the loss of derivatives in \rt{thm1}.

\begin{defn} Let $[r_1]\geq [ r_2]\geq [s]$.
 The $\cL C^{r_1,r_2;s}(\ov U\times \ov V)=\cL C^{r_1,r_2;s}_{x,y;\la}(\ov U\times \ov V)$ is the set of functions $u^\la (x,y)$ such  that $\pd_x^i\pd_y^j\pd_\la^ku^\la(x,y)$ are continuous in $\ov U\times\ov V\times I$ and have bounded $\cL C^{r_1-\{r_1\}}(\ov U)$ norms for $(y,\la)\in\ov V\times I$, bounded $\cL C^{r_2-\{r_2\}}(\ov V)$ norms for $(x,\la)\in\ov U\times I$, and bounded $\cL C^{s-\{s\}}(I)$ norms for $(x,y)\in\ov U\times \ov V$,  for $k\leq s, j+k\leq r_2, i+j+k\leq r_1$.
%  $\{u^\la \}\in \cL C^{r_2,s}(\ov U_1\times \ov U_2)$ and
%and $\{\pd_1^i u^\la \}$ are in  $\cL C^{r_2,s}$ for $i\leq[r_1]-[r_2]$, where $\pd_1 u^\la$ are the first-order derivatives $\pd_xu^\la (x,y)$.
\end{defn}
In next two propositions we consider (real) Frobenius structure $S$, i.e.  a complex Frobenius structure $S$ in a domain $D$ of $\rr^N$ with  CR dimension $0$, i.e. $\ov S=S$. Then  $M:=\dim_\cc S_p$ is independent of point $p\in D$ and it is the rank of $S$.
\pr{rfp}{\rm\bf(Frobenius Theorem with parameter.)} Let $r,s$ satisfy
\rea{rsrs}.
%r\geq1,\quad r\geq s,\quad \{r\}\geq\{s\}.
%\eeq
% Let $r$ and $s$ be non-negative real numbers or $\infty$.
%Assume that $r\geq s$ and $r\geq1$.
 Let $S=\{S^\la \}$ be a family
of $($real\,$)$ Frobenius structures in a domain $D$ of $ \rr^N$ of class $\cL C^{r,s}(\ov D)$. Assume that $S^\la$ have the constant rank $M$.
Near each point $p\in D$, there exists a family $\{\Phi^\la \}\in\cL C^{r,s}(\ov U)$ of diffeomorphisms $\Phi^\la $ from $U$ onto ${B_\delta^M}\times {B_{\epsilon}^{N-M}}$
such that $\Phi^ \la_* S^\la $ are the span of $\pd_{x_1},\ldots, \pd_{x_M}$. Furthermore, if $S_0^\la $ are tangent to $\rr^M\times\{0\}$, then we can take $(\Phi^\la )^{-1}\colon \tilde x=x,\tilde y=F^\la (x,y)$ with $\{F^\la \}\in\cL C^{r+1,r;,s}(\ov{B_\delta^M}\times \ov{B_{\epsilon}^{N-M}})$,  where  $x,y$ are coordinates of $\rr^M,
\rr^{N-M}$,  respectively. % and $B_1,U_2$ are neighborhoods of the origins of $\rr^M, \rr^{N-M}$ respectively.
\epr
We remark that for the standard Frobenius structure defined by $\pd_{x_1},\dots, \pd_{x_M}$ in $\rr^N$, the
$(x_1,\dots, x_M)$ space is   the leaf space of the foliation, while $y=(y_1,\dots, y_{N-M})$ is the subspace of $\rr^N$ transversal to the leaves of the foliation.
In this section
repeated lower indices $m,m'$ are summed over the range $1,\dots, M$ and repeated indices $\ell, \ell'$ are summed over  the range $1,\dots, N-M$.
\begin{proof} We adapt  the proof in \ci{NN73}, which does not involve a parameter and is for integers $r,s$. To recall the proof let us first assume that $S^\la=S$ are independent of $\la$.
By a linear change of coordinates, we may assume that $p=0$ and
the tangent space of $S_0$ is the  $x$ subspace.   Near the origin, $S$ consists of vector fields annihilated by
\eq{dyA}
dy_ \ell -A_{ \ell  m}\, dx_m, \quad 1\leq\ell\leq N-M.
\eeq
%Recall we use the summation convention. In this section the repeated lower index $j,m$ are summed over $1,\dots, M$, and repeated lower index $i,\ell$ are summed over $1,\dots, N-M$.
Write $A_{m}
=(A_{1m},\dots, A_{(N-M)m})$.
 Let $d_x$ be the differential in $x$ variable.
We seek  integral submanifolds $M\colon Y=F(x,y)$ so that for a fixed $y$
\eq{dxf}
d_xF_ \ell (x,y)=A_{\ell  m}(x,F(x,y))\, dx_m, \quad F(0,y)=y.
\eeq
Then we can verify that $\tilde x=x$ and $\tilde y=F(x,y)$ transform $\pd_{ x_m}$ into $\f{\partial}{\partial\tilde x_m}+  A_{\ell m}(\tilde x,\tilde y)\f{\partial}{\partial\tilde y_ \ell }$.
It is clear that for the integral manifolds,
  $F$ must satisfy the equation in the radial integral
\ga\label{FTF}
F  (x,y)=TF  (x,y).
\end{gather}
Here $ (TF) _ \ell  (x,y)=y _ \ell  +x_m\int_0^1A_{ \ell   m}( t x, F( t x,y))\, dt.$

To prove next proposition, let us consider a more general operator
\ga\label{Twithb}
 (TF) _ \ell  (x,y):=b_\ell(y)  +x_m\int_0^1A_{ \ell   m}( t x, F( t x,y))\, dt
\end{gather}
with $b_\ell(0)=0$ and $ b_\ell\in \cL C^{r}(\ov {B_{\del_0}^M})$. Let $F_0(x,y)=b(y)$
and $F_{n+1}=TF_n(x,y)$.
By the Picard iteration,   equations \re{FTF}-\re{Twithb} admit
a $\cL C^1$ solution $F=\lim_{n\to\infty}F_n$ in  $K_{\delta,\epsilon}:=\ov{B_\delta^M}\times \ov{B_{\epsilon}^{N-M}}$ for some positive $\e$ and $\delta$, provided  that on  $K_{\delta_0,\epsilon_0}$ \eq{}
|A(x,y)|<C, \quad |\pd_y A(x,y)|<{C}.\nonumber
\eeq
% In fact, one can show
%that with $F_0(x,y)=y$,
%the sequence
%$F_{n+1}=TF_n$ converges in $\cL C^1(K_{\delta,\epsilon})$ to $F$.

We can verify that $F$ satisfies \re{dxf}. Since \re{Twithb} is a slightly general situation, we provide some details for the reader,    using a method in \cite{NN73}*{pp.~158-163}.
%We want to show that $F$ satisfies \re{dxf}.
The
integrability condition means that $$d(A_{\ell m}(x,y)\, dx_m)=0\mod \text{span}\{dy_{\ell'}-A_{{\ell'}m}(x,y)\, dx_m\}. %,\quad Y=Y(x).
$$
Replacing $dy_{\ell'}$ by $A_{{\ell'}m'}\, dx_{m'}$   in the first identity after the differentiation, we obtain
\al\label{AiAj}
\pd_{x_{m'}}A_{\ell m}(x,y)&-\pd_{x_m}A_{\ell {m'}}(x,y)\\
&= \pd_{y_{\ell'}}A_{\ell {m'}}(x,y)A_{{\ell'}m}(x,y)-
\pd_{y_{\ell'}}A_{\ell m}(x,y)A_{{\ell'}{m'}}(x,y).\nonumber
\end{align}
To show that $d_xF(x,y)=A(x,y)dx$ for $y=F(x,y)$, it suffices to verify that
$$
(R_{n})_{\ell m}(x,y)=\pd_{x_m}(F_n)_\ell(x,y)-A_{\ell m}(x,F_{n}(x,y))$$
tends to zero in sup-norm as $n\to\infty$.
Differentiating
$$(F_{n+1})_\ell(x,y)=b_\ell(y)+x_m\int_0^1A_{\ell m}( t x,F_n( t x,y))\, dt,$$ we get
\al\nonumber%
\pd_{x_m}(F_{n+1})_\ell(x,y)&=\int_0^1\Bigl\{A_{\ell m}( t x,F_n( t x,y))  + x_{m'}  t (\pd_{x_m}A_{\ell {m'}})
( t x,F_n( t x,y))\Bigr\}dt\\
\nonumber
&\quad+ x_{m'}\int_0^1(\pd_{Y_{\ell'}}A_{\ell {m'}})( t x,F_n( t x,y)) t (\pd_{x_m}(F_n)_{\ell'})( t x,y)\, dt.
\end{align}
Adding $0=A_{\ell m}(x,F_n(x,y))-\int_0^1\pd_{t}( t A_{\ell m}( t x,F_n( t x,y))\, d t $ to the right-hand side, we
get
\aln
\pd_{x_m}(F_{n+1})_\ell(x,y)&=A_{\ell m}(x,F_n(x,y)) - x_{m'}\int_0^1t\pd_{x_{m'}}A_{\ell m}( t x,F_n( t x,y))\, dt\\
&- x_{m'}\int_0^1t\pd_{Y_{\ell'}}A_{\ell m}( t x,F_n( t x,y))\pd_{x_{m'}} (F_n)_{\ell'}( t x,y)\, dt\\
&+ x_{m'}\int_0^1 t (\pd_{x_m}A_{\ell {m'}})
( t x,F_n( t x,y))\, d t \\
&+ x_{m'}\int_0^1(\pd_{Y_{\ell'}}A_{\ell {m'}})( t x,F_n( t x,y)) t (\pd_{x_m}(F_n)_{\ell'})( t x,y)\, dt.
\end{align*}
We want to express the above in terms of  $R_n$. By
\re{AiAj} in which $F_n(x,y)$ substitutes for $y$, we simplify the right-hand side and  obtain
\aln
&(R_{n+1})_{\ell m}(x,y)=x_{m'}\int_0^1\pd_{Y_{\ell'}}A_{\ell {m'}}( t  x,F_n( t  x,y))(R_n)_{{\ell'}m}( t  x,y)\, dt-
\\
&x_{m'}\!\!\int_0^1\!\!
\pd_{Y_{\ell'}}
A_{\ell m}( t  x,F^n( t  x,y))(R_n)_{\ell'{m'}}( t  x,y)\, d t
 +A_{\ell m}(x,F_n(x))-A_{\ell m}(x,F_{n+1}(x,y)).
\end{align*}
We have $|\pd_yA(x,y)|<C$ and
$$|A(x,F_{n+1}(x,y))-A(x,F_n(x,y))|\leq C|F_{n+1}(x,y)-F_n(x,y)|.
$$
For some $0<\theta<1$, we have
$|F_{n+1}(x,y)-F_n(x,y)|\leq C\theta^n$ and
\al
|R_{n+1}|_0\leq C_0\theta^n+\theta|R_n|_0, \quad n\geq0.\nonumber%
\end{align}
Therefore,  $R_n$ tends to $0$ in the sup norm as $n\to\infty$ and  $d_xF(x,y)=A(x,F(x,y))\, dx$.

We now consider the parametric case. By the initial normalization in \rl{intialcod}, we may assume that $p=0$ and all $S_0^\la$ are tangent to the $y$-subspace.  Thus $S^\la$ are defined by \re{dyA} in which
 $A_m=(A_{1m},\dots, A_{(N-M)m})=A^\la_m$ depend on $\la$. Then we  take
 $F_0^\la(x,y)=b^\la(y)$ and $F_{n+1}^\la (x,y)=TF_n^\la(x,y)$. Thus
\eq{fLay}
(F_{n+1}^\la)_\ell (x,y)= b_\ell^\la(y)+x_m\int_0^1A_{\ell m}^\la (\theta x,F_n^\la (t x,y))\, dt.
\eeq
By the   Picard iteration, if $|A|_{K_{\delta_0,\epsilon_0};1,0}<C_0$, we find $F^\la=\lim_{n\to\infty}F_n^\la$ with $\{F^\la\}\in \cL C^{1,0}(K_{\delta,\epsilon})$ for some positive $\del,\e$. By a bootstrap argument by shrinking $\del,\e$  a few more times and keeping
 them independent of $r,s$,  one can verify the full regularity.   We briefly indicate the argument.
Let $\all=\{r\}>0$ and consider the case $[r]=[s]=1$ and $\{s\}=\beta\leq\all$. For $1\leq m\leq M$,
\al\nonumber
&\pd_{x_m}F^\la (x,y)=x_{m'}\int_0^1 t (\pd_{y_ \ell  }A_{m'}^\la )( t  x,F^\la ( t  x,y))\pd_{x_m}F_ \ell  ^\la ( t  x,y)\, d t \\
&\qquad +x_{m'}\int_0^1 t  \pd_{x_m}A_{m'}^\la ( t  x,F^\la ( t  x,y))\, d t  + \int_0^1A_m^\la ( t  x,F^\la ( t  x,y)) \, d t.\nonumber
\end{align}
By another Picard iteration, we can show that $\pd_{x_m}F\in \cL C^{\all,0}$. One can also show that $\{\pd_{y_ \ell  }F^\la \}\in \cL C^{\all,0}$.  When $s\geq1$, we can verify that $\pd_\la F^\la \in C^0$ and
\al\nonumber
\pd_{\la}F^\la (x,y)&=\pd_\la b^\la(y)+x_m\int_0^1 (\pd_{y_ \ell }A_m^\la )( t  x,F^\la ( t  x,y))\pd_{ \la}F_ \ell  ^\la ( t  x,y)\, d t \\
&\quad +x_m\int_0^1  (\pd_{\la}A_m^\la )( t  x,F^\la ( t  x,y))\, d t .\nonumber
\end{align}
By the Picard iteration, we can verify that $\{\pd_\la F^\la \}\in C_*^{0,\beta}$, using $\all\geq\beta$.

While taking derivatives one-by-one without further shrinking $\del,\e$, we can verify that $\{F^\la \}\in \cL C^{r,s}$.
\end{proof}

The above  proof for \re{dyA}, \re{FTF} and \re{Twithb}   shows the following.
\pr{imf}Let $\{A^\la\}\in \cL C^{r,s}(\ov {B_{\e_0}^M}\times\ov{B_{\del_0}^{N-M}})$ with $A^\la$ being $L\times M$ matrices satisfying the integrability condition \rea{AiAj}.  Let $b\in \cL C^{r,s}(\ov {B_{\e_0}^M})$. Assume that $A^\la(0)=0$ and $b^\la(0)=0$. There exists $\{F^\la\}\in \cL C^{r+1,r;s}(\ov{B_\del^M}\times\ov{B^{N-M}_\e})$ for   some positive  $\del$, $\e$ satisfying $F^\la(0,y)=b^\la(y)$ and
\eq{dxF} \nonumber %
d_xF^\la_\ell(x,y) = A^\la_{\ell m}(x,F^\la(x,y))\, dx_m,\quad 1\leq\ell\leq N-M.
\eeq
\epr

It is interesting that the proof by F.~and R.~Nevanlinna produces a sharp result. Let us demonstrate it
 by examples.
Our first example shows that in general  one cannot expect to gain any derivatives  in $\la$ for finite smooth case.
\begin{exmp} (Complex version.)
Let $a^\la $ be a  function that is $\cL C^\infty$
for $\la \neq0$ and is $\cL C^{1+\all}$. Suppose that $a^\la $ is independent of $z$ and $|a^\la |<1$.
Let $F^\la(z)=z+{a^\la }\ov z$. Then $F^\la_*\pd_{\ov z}$  is
$$
Z^\la=\pd_{\ov z}+\ov{a^\la }\pd_z.
$$
Thus $Z$ is of class $\cL C^{1+\all}$ in $(z,\la)\in\cc\times\rr$. When regarding $F^\la(z)$ as a mapping $\tilde F(z,\la)=(F^\la(z),\la)$ and $Z^\la(z)$ as a vector field $\tilde Z$ in $(z,\la)$, we still have
$$
\tilde F_*\pd_{\ov z}=\tilde Z.
$$
We want  to show that there does not exist a diffeomorphism $G\colon (z,\la)\to G(\la,z)$ so
that $G$ is defined in
a neighborhood of origin in $\cc\times\rr$ and of class $ \cL C^{1+\beta}$ in $(z,\la)$ for  $\beta>\all$,  while $G$ transform $\tilde Z$ into $\pd_{\ov z}$.

Suppose that such a  $G$ exists. Then $G(F^\la(z),\la)=H(z,\la)$ is
 holomorphic in $z$. Write $H(z,\la)=(h(z,\la),r(z,\la))$ and $G(z,\la)=(g(z,\la),s(z,\la))$, where $r$ and $s$ are real-valued. Since $r(z,\la)$ is holomorphic in $z$ and real-valued, then $r(z,\la)$ is independent of $z$. Thus, $\pd_zh\neq0$. Set  $\tilde  w=(1-|a^\la |^2)^{-1}(z-a^\la \ov z)$. We  have
$$
g(z+a^\la \ov z,\la)=h(z,\la), \quad g(z,\la)=h(\tilde w,\la).
$$
 Since $h(z,\la)$ is holomorphic in $z$ and is $ \cL C^{1+\all}$, Cauchy's formula implies that $\pd_zh(z,\la)\in \cL C^{1+\all}$. Then
\eq{pdzg}
\pd_zg(z,\la)=(1-|a^\la |^2)^{-1}\pd_{\tilde  w }h(\tilde w,\la), \  \pd_{\ov z}g(z,\la)=-(1-|a^\la |^2)^{-1}a^\la \pd_{\tilde  w}h(\tilde w,\la)
\eeq
  are in $\cL C^{1+\all}$.  Since $h$ is holomorphic in $z$, then
$$
\int_{|z|=\e}zg(z+a^\la \ov z,\la)\, dz=\int_{|z|=\e }zh(z,\la)\, dz=0.
$$
Let $w=z+{a^\la }\ov z$. Taking $\la$ derivative, we get
\eq{AtBt}
A^\la   \pd_\la{a^\la } +B^\la \pd_\la\ov {a^\la } =-\int_{|z|=\e}z\pd_\la g(w,\la)\, d\la
\eeq
where
$
A^\la =\e^2\int_{|z|=\e}\pd_w g(w,\la)\, d\la$ and $  B^\la = \int_{|z|=\e}z^2\pd_{\ov w}g(w,\la)\, dz.
$
The right-hand side and $A^\la ,B^\la $ are   in $\cL C^\beta$.  Since $ \pd_wh(0,\la)\neq0$ and $|a^\la |<1$,  then \re{pdzg} imply that
 $|A^\la |>|B^\la |$, when $\epsilon$ is sufficiently small.
Solving for $\pd_\la a^\la ,\pd_\la\ov{a^\la }$ from \re{AtBt}, we see that $\pd_\la a^\la $ is in $\cL C^\beta$, a contradiction.
\end{exmp}

The next example shows that for   Frobenius structures, we cannot expect to gain derivatives. This  contrasts with the complex structures for which normalized transformation gain some derivatives.
\begin{exmp} (Real version.)
For $(x,y,z)\in\rr^3$, let $F(x,y,z)=(x,y+a(z)x,z)$.
Here $a$ is in $\cL C^{1+\all}$ but  not in $\cL C^{1+\beta}$ for any $\beta>\all$.
Then  $F_*\pd_x$ equals
$$
X=\pd_x+a(z)\pd_y.
$$
We claim that there is no $G\in\cL C^{1+\beta}$ with $\beta>\all$   transforming $X$ into $\pd_x$.

Suppose that such a $G$ exists. Then the last two components
of $G\circ F(x,y,z)=S(x,y,z)$ are independent of $x$. Let $s,g$ be last two components of $S$ and $G$, respectively.
   We
get
$$
s(y,z)=g(x,y+a(z)x,z).
$$
Then $s(y,z)=g(0,y,z)$ is also in $\cL C^{1+\beta}$.
Since the Jacobian of $S$ does not vanish at the origin, then $\pd_yg(0)\neq(0,0)$. Without loss of generality, we may assume that $\pd_yg_1(0,0)\neq0$. Let $u=u(x,y,z)$ be the $\cL C^{1+\beta}$ solution to $s_1(x,y)=g_1(x,u,z)$.  Then
 $y+a(z)x=u(x,y,z)$ is in $\cL C^{1+\beta}$
near $(x,y,z)=0$, a contradiction.
\end{exmp}

 As mentioned in the introduction,  Nirenberg~\ci{Ni57} asked if an  analytic family of $\cL C^\infty$ Levi-flat CR vector bundles could be normalized by an analytic family of smooth diffeomorphisms. Nirenberg's question  was justified by  his theorem on a holomorphic family of complex structures~\ci{Ni57}; see \rp{nithm} below.
We now provide a negative answer to the analogous question for the finite smoothness case. This shows    a significant difference between the analytic families of
general complex Frobenius and complex structures.  We recall
\eq{opPt}
P^\la =\pd_x+(|y|^{a+1}+\la)\pd_y, \quad (x,y)\in \rr^2.
\eeq
\pr{countex} Let $a\in(1,\infty)\setminus\nn$. Let $P^\la $ be defined by \rea{opPt}. Then
$
P^\la  N^\la =0
$
does not admit   solutions $N^\la $ defined in a neighborhood of the origin in $\rr^2$ that satisfy
\eq{exactreg}
dN^\la (0)\neq0, \quad\{N^\la \} \in  \cL C^{b,1}_*, \quad b>a.
\eeq
In particular, there does not exist a real analytic family of $\cL C^b$ diffeomorphisms that transform
$P^\la
$ into $\pd_{\hat x}$.
\epr
\begin{proof} We may assume that $a<b<[a]+1$.
Suppose for the sake of contradiction that there is a family of solutions $N^\la $ satisfying $P^\la N^\la=0$ and \re{exactreg}.
 %Let $P^\la =\pd_x+(y^{a+1}+\la)\pd_y$.
 We define $(x,y)=\varphi (\hat x,\hat y)$ as
$$
x=\hat x,\quad y=\hat y(1-a\hat x\hat y^{a})^{-{1}/{a}}.
$$
Note that $\var$ preserves the sign of the $y$ component and $\var$ is of class $\cL C^{a+1}$.  Thus,
 in what follows, we will restrict $y$ and $\hat y$ to be non-negative.
 Note that $\var^{-1}(x,y)=(x,y(1+axy^a)^{-1/a})$.
Applying the chain rule, we verify that
\ga\label{p0Q}
\var_*\pd_{\hat x}=P^0, \quad (\var)^{-1}_*\pd_y=Q,\\
Q:= (1-a\hat x\hat y^a)^{(a+1)/a}\pd_{\hat y}. \nonumber %
\end{gather}

Define    $N_j=(\pd_\la)^j|_{\la=0}N^\la $.  Using $P^\la N^\la =0$,   the constant and linear terms  in $\la$ on the right side  yield
$$
P^0N_0=0, \quad P^0N_1=-\pd_yN_{0}.
$$
%Here we have used $(\pd_tP^0)(N^\la )=(P^0\pd_t)N^\la $, which is valid  for  $\{N^\la \}\in C^{1,1}_*$.
In the $(\hat x,\hat y)$-coordinates,  the above equations become
\eq{P0n0}\nonumber %
\pd_{\hat x}\hat N_0=0, \quad\pd_{\hat x}\hat N_1= -Q\hat N_0
\eeq
with $\hat N_i=N_i\circ\var\in \cL C^b$.
Let us drop hats in $\hat N_i$ and $(\hat x,\hat y)$.
We first note that % $N_0(x,y)$ depends only on $x$. So we can write
$N_0(x,y)$ is independent of $x$, which is now denoted by  $B(y)$. We have $B\in \cL C^b$.   By the fundamental theorem of calculus,  we obtain
\ga
N_1( x,  y)-  N_1(0,  y)=  -B'(y)u(x,y), \label{N1n1}\\ u(x,y):=\int_0^{  x}(1-axy^a)^{(a+1)/a} \, dx.\label{N1n1b}
\end{gather}
%Since $b<a+1$  then  $\{N^\la \}\in C^{b,1}_*$. Hence  $N_1\in C^b$.
We have $u\in\cL C^a$.  Fix a small $x_0\neq0$ so that $u(x_0,y)\neq0$. Since $b>a$,  by $B'(y)=(N_1(x_0,y)-N_1(0,y))u^{-1}(x_0,y)$
 we obtain $B'\in \cL C^{a}$.

 Next, we show that $yB'(y)$ has a better regularity. Differentiating \re{N1n1}, we obtain
 \eq{}\nonumber
 B^{([a]+1)}(y)u(x,y)=-B'(y)\pd_y^{[a]}u(x,y)+\tilde B(x_0,y),
 \eeq
with $\tilde B\in \cL C^{b-[a]}$. It is straightforward that $y\pd_y^{[a]}u(x,y)$ is $\cL C^1$ in $y$. Therefore, $yB^{([a]+1)}$(y) is in $\cL C^{b-[a]}$.  Now,  the function $yB'(y)$ is in $\cL C^b$  because $(yB'(y))^{[a]}=yB^{[a]+1}+[a]B^{[a]}(y)$.

Using the Taylor series of the power function,  we express \re{N1n1b} as
$$
u (x,y)=x-\f{a+1}{2}x^2y^a+yE (x,y),
$$
with $E\in \cL C^{2a-1}$. In \re{N1n1},  we substitute $u$ by the above expansion and obtain
$$
xB'(y)-\f{a+1}{2}x^2y^aB'(y)=N_1(0,y)-N(x,y)-(yB'(y))E(x,y).
$$
The right-hand side is of class $\cL C^{b'}$ for $b'=\min(2a-1,b)$.  Plugging in $x=\epsilon, 2\epsilon$ in the above identity, we get two equations for $B'(y)$
and $y^aB'(y)$. Solving them, we conclude that $B'(y), y^aB'(y)$ are in $\cL C^{b'}$.
Since $dN^0(x,y)\neq0$ and $B(y)=N_0(x,y)$, then  $B'(0)\neq 0$. Therefore,  $y^a$ is in $\cL C^{b'}$, a contradiction.\end{proof}

We can also prove the following  result for  non-homogeneous equations.
\pr{cex} Let $P^\la $ be defined by \rea{opPt}. Let   $a'\in(1,\infty)\setminus \nn$ and $a'<a$. Then
$$
P^\la v^\la =|y|^{a'+1}
$$
does not admit a solution $\{v^\la \}\in \cL C^{b, 1}_*(\ov D)$ for $b>a'$ and any  neighborhood $D$ of $0$.
\epr
\begin{proof}Assume for the contrary that such a solution $\{v^\la \}$ exists.
Let $(x,y)=\var(\hat x,\hat y)$ be as in the above proof, and  let $v_i=\pd_\la^i|_{\la=0}v^\la $ and $\hat v_i(\hat x,\hat y)=v_i\circ\var(\hat x,\hat y)$.
  Then $P^\la v^\la =|y|^{a'+1}$ implies that
$$
P^0v_0=|y|^{a'+1}, \quad P^0v_1=-\pd_yv_0.
$$
 Let us drop hats in $\hat v_i$,  $\hat x,\hat y,\hat v_i$, and restrict $y\geq0$. By \re{p0Q}, we get
$$
\pd_x v_0(x,y)=y^{a'+1}(1-axy^a)^{-(a'+1)/a}, \quad \pd_xv_1(x,y)=-(1-axy^a)^{(a+1)/a}\pd_y v_0(x,y).
$$
 Using Taylor series of the power function in both identities, we get
\aln
v_0(x,y)&=A(y)+xy^{a'+1}  +E_{2a'+1}(x,y), \\
v_1(x,y)&=B(y)-C(x,y)A'(y)-\f{a'+1}{2}x^2y^{a'}  +E_{2a'}(x,y),
\end{align*}
where
$
C(x,y)=x+x^2E_{a}(x,y),
$ and $E_r\in \cL C^{r}$. We may assume that $a'<b<[a']+1$ and $b<a$.  Since $\{v^\la \}\in \cL C^{b,1}_*$ and $\var\in \cL C^{a+1}$, then $v_i, A,B$ are in $ \cL C^{b}$.
 Plugging in $x=\e,2\e$, we can express $A'(y),y^{a'}$ as linear combinations of
 $\cL C^b$ functions, of which the coefficients are also $\cL C^b$ functions. Here we have used
 $$
 \begin{vmatrix}
 (2\e)^2&\e^2\\
 C(2\e,y)&C(\e,y)
 \end{vmatrix}=2\e^3+4\e^4E_a(\e,y)-4\e^4E_a(4\e,y)\neq0
 $$
 when $\e>0$ is sufficiently small.
 This shows that $y^{a'}$ is of class $\cL C^b$ with $b>a'$, which is a contradiction.
\end{proof}

By \rp{rfp},  the equation in \rp{cex}  has a solution $\{v^\la \}\in \cL C^{r,r}(D)$ for some neighborhood $D$ of the origin  and $r=a'+1$, when $a\geq a'$ and $a'\in(0,\infty]\setminus\nn$.
%It would be interesting to reformulate the above counter-example in terms of a real analytic family of partial differential operators.
%We now consider a family of partial differential operators
%$$
%P^\la =\pd_0+A(y)\pd_y+t\pd_y.
%$$
%Here $A(y)=0$ for $y\leq0$ and $A^{(j)}(0)=0$ for $0\leq j\leq k$. We also assume that $A\in C^k(\rr)\cap C^\infty(\rr\setminus\{0\})$, but $A\not\in C^a([0,1])$ for any $a>k$. In fact we can prescribe its $k$-th derivative to be a positive  continuous function on $(0,\infty)$. When $k=1$, we also assume that $A'(y)>y$ for $y>1$. Then we can define a $C^\infty$ diffeomorphism $\Phi$ from $(0,\infty)$ onto $(-\infty,0)$ by
%$$
%\Phi(y)=-\int_y^\infty\f{dy}{A(y)}, \quad y>0.
%$$
%This allows to define a $C^\infty$ diffeomorphism $(x,y)=\phi(\hat x,\hat y)$ from $[0,\infty)\times(0,\infty)$ onto its range by
%\eq{phixh}
%\hat x=  x, \quad \hat y=\Phi(  y)-  x.
%\eeq
%For each $\e>0$, both $\phi$ and $\phi^{-1}$ send $[0,\delta]\times(0,\delta]$ into $[0,\e]\times(0,\e]$ for some $\delta>0$. By the chain rule and differentiating \re{phixh}, we obtain
%\eq{cderi}
%\pd_{\hat x}=\pd_0+A(y)\pd_y, \quad \pd_y=\f{1}{A(y)}\pd_{\hat y}.
%\eeq
%Assume now that $P^\la N^\la =0$ and $dN^\la (0)\neq0$. As before, we define $N_i=\pd^i_t|_{t=0}N^\la $. We obtain
%$$
%P^0N_0=0, \quad P^0N_1=-\pd_yN_0.
%$$
%We then define $\hat N_i=N_i\circ\phi$.
%We obtain $\hat N_0(\hat x,\hat y)=B(\hat y)$ and
%$$
%\pd_{\hat x}\hat N_1(\hat x,\hat y)=\f{1}{A(y)} B'(\hat y).
%$$
%The above formula is valid for $(\hat x,\hat y)\in(0,\del)\times(0,\del)$.

\setcounter{thm}{0}\setcounter{equation}{0}
\section{A homotopy formula for mixed real and complex differentials
}\label{sec5-}

In this section, we will adapt estimates for the Koppelman-Leray homotopy formula, which are  due to Webster~\ci{We89}. We will also  recall a homotopy formula
 for the $d^0+\db$ complex, which is defined in this section.
The homotopy formula for $d^0+\db$ is obtained by combining the Bochner-Martinelli, Koppelman-Leray formulae for all degrees and Poincar'e homotopy formula, which is due to Tr{e}ves~\ci{Tr92}.

Let us first  recall the Bochner-Martinelli and Koppelman-Leray  formulae for the ball $B_\rho^{2n}\subset \cc^n$. The reader is refer to Webster~\ci{We89} and Chen-Shaw~\ci{CS01} for details.
For $(\zeta,z)\in\cc^n\times\cc^n$ with $\zeta\neq z$, let
\gan
\om^0=\f{1}{2\pi i}\f{(\ov\zeta-\ov z)\cdot d\zeta}{|\zeta-z|^2}, \quad\om^1=\f{1}{2\pi i}\f{\ov\zeta\cdot d\zeta}{\ov\zeta\cdot(\zeta-z)},\\
\Om^{\ell}=\om^i\wedge(\db\om^i)^{n-1},\quad i=0,1; \\ \Om^{01}=\om^0\wedge\om^1\wedge\sum_{\all+\beta=n-2}(\ov\pd\om^0)^\all\wedge
(\ov\pd\om^1)^\beta.
\end{gather*}
Let $\Om^0_{q}$, $\Om^{01}_{q}$ be the components of $\Om^0$
and $\Om^{01}$ of type $(0,q)$ in $z$, respectively:
$$
\Om^i=\sum_{q=0}^{n-1}\Om^i_{q}, \quad i=0,1;\qquad \Om^{01}=\sum_{q=0}^{n-2}\Om^{01}_{q}.
$$
The Koppelman lemma says that $\db_{\zeta,z}\Om^0=0$ and $\db_{\zeta,z}\Om^{01}=\Om^0-\Om^1$. Thus
\gan
\db_\zeta\Om_q^{0}=-\db_z\Om^0_{q-1}, \quad
\Om_q^{0}=\Om^1_q-\db_z\Om^0_{q-1}-\db_\zeta\Om^{01}_{q}.
\end{gather*}
Let $\var$ be a
   $(0,q)$ form in  $\cL C^1(\ov {B^{2n}_\rho})$, and let $f$ be a $\cL C^1$ function. We have
\gan
\var=  \db P_{q}\var+P_{q+1}\db \var,\quad q>0; \quad
\var= B_0^1 \var+ P_{ 1}\db \var, \quad q=0;\\
P_q\var= P_q^0\var-P_q^{01}\var;\\
P_q^0\var(z):=\int_{ B^{2n}_\rho}\Om_{q-1}^{0}(\zeta,z)\wedge\var(\zeta), \quad P_q^{01}\var(z)=: \int_{\pd B^{2n}_\rho} \Om_{q-1}^{01}(\zeta,z)\wedge \var(\zeta), \quad q>0;\\
B^1_0\var(z):=\int_{\pd B^{2n}_\rho}\Om_0^{1}(\zeta,z) \var(\zeta),\quad q=0.
 \end{gather*}
Note that $ \db_z \Om^1_0(\zeta,z)=0$.   When $n=1$, $P_1=P_1^0$ is the Cauchy-Green operator.

The Poincar\'e lemma for a $q$-form $\phi$  on the ball $\ov B^M_\rho$ has the form
\ga\label{pht}
\phi=d^0R_q\phi+R_{q+1}d^0\phi, \quad q>0;\quad \phi=R_1d^0\phi+\phi(0), \quad q=0; \\
\quad    R_q\phi(t):=\int_{\theta\in[0,1]}H^*\phi(t,\theta). \nonumber
\end{gather}
Here $H(t,\theta)=\theta t$ for $(t,\theta)\in B_\rho^M\times[0,1]$. Note that
$$
d(\theta t_{1})\wedge\cdots\wedge d(\theta t_{q})=\sum(-1)^{j-1} t_j\theta^{q-1}d\theta\wedge dt_1\wedge\cdots\widehat{dt_j}\cdots\wedge dt_{q}.
$$
It is immediate
that for $q>0$
\eq{3p13}
 |R_q\phi |_{\rho;a}\leq C_a\rho |\phi |_{\rho;a}, \quad \phi\in\cL C^a(\ov {B^M_\rho}), \quad a\in[0,\infty).
\eeq
Note that there is no gain in derivatives in estimates of $R_q$. Set $R_{j}=0$ for $j>M$.

Next, we consider the complex for the exterior differential
$$\cL D:=d_{t}+\db_z$$
for $(z,t)\in\cc^n\times\rr^M$.  We will also write the above as
$\cL D=d^0+\db.
$
%and $\pd=\pd_z$.
%Note that  $\db+\partial$ is the exterior differential on $\cc^n$.
A differential form $\var$ is called of {\it mixed type} $(0,q)$
if
$$
\var=\sum_{i=0}^ {q} [\var]_{i},
$$
where $[\var]_{ i}=\sum_{|I|=i, |I|+|J|=q} a_{IJ}d\ov z^I\wedge dt^J$.  Thus
$$
  [\var]_i=0, \quad \text{for}\  i>n.
$$
The $\cL D $ acts on a function $f$ and a $(0,q)$ form as follows
$$
\cL Df=\sum_{m=1}^M\f{\pd f}{\pd t_m}dt_m+\sum_{\all=1}^n\f{\pd f}{\pd\ov z_\all}\, d\ov z_\all, \
\cL D\!\! \sum_{|I|+|J|=q}a_{IJ}d\ov z^I\wedge dt^J=\sum_{|I|+|J|=q} \cL Da_{IJ}\wedge d\ov z^I\wedge dt^J.$$
Let $\cL C^a_{0,q}$ be space of $(0,q)$-forms of which the coefficients are of class $\cL C^a$. Then
$$
\cL D \colon \cL C_{(0,q)}^{\, a}\to \cL C_{(0,q+1)}^{\, a-1}.
$$
From $(d^0+\db+\pd)^2=0$, we get
\eq{}\nonumber
\cL D ^2=0, \quad    d^0\db+\db d^0=0.
\eeq
We also have
\ga \nonumber %  \label{dpvm}
[\cL D \var]_0=d^0[\var]_{0},\quad
[\cL D \var]_i=d^0[\var]_{i+1}+\db[\var]_{i}, \quad 0<i\leq n.
\end{gather}
For   $\var=\sum \var_{IJ}d\ov z^I\wedge dt^J=\sum\tilde\var_{IJ}dt^J\wedge d\ov z^I$ on $\ov{B_\rho^{2n}}\times \ov{B_\rho^M}$, define
\aln
P_i\var&=\sum_{|I|=i}P_i(\var_{IJ}d\ov z^I)\wedge dt^J, \\
R_{q-i}\var &=\sum_{|J|=q-i}R_{q-i}( \tilde \var_{IJ}d t^J)\wedge d\ov z^I.
\end{align*}
Thus $P_i\var=P_i[\var]_i$,  while $R_{q-i}\var=R_{q-i}[\var]_i$ if $\var$ has the (mixed) type $(0,q)$. We have
\aln
d^0P_i\var&=\sum_{m=1}^M dt_m\wedge\pd_{t_m}P_i(\var_{IJ}\wedge d\ov z^I)\wedge dt^J\\
&=\sum_{m=1}^M(-1)^{i-1}\pd_{t_m}P_i(\var_{IJ}\wedge d\ov z^I)\wedge dt_m\wedge dt^J.
\end{align*}
Also $\pd_{t_m}P_i(\var_{IJ} d\ov z^I)\wedge dt_m=P_i(\pd_{t_m}\var_{IJ} d\ov z^I\wedge dt_m)$. Thus
\ga\label{d0p}
d^0P_i\var=-P_id^0\var \quad i>0; \quad \db R_{q-i}\var=-R_{q+1-i}\db\var, \quad i<q.
\end{gather}
We now derive the following homotopy formulae for $\cL D$.
\le{dpht} Let $1\leq q\leq N$. Let $D_\rho=B_\rho^{2n}\times B_\rho^M$. Let $\var$ be a mixed $(0,q)$ form in $\ov D_\rho$ of class $\cL C^{1,0}(\ov{D_\rho})$.
Then we have two homotopy formulae
\ga\label{vdhv}
\var=\cL DT_q\var+T_{q+1}\cL D \var,\\
\label{vdhv+}
\var=\cL D \tilde T_q\var+\tilde T_{q+1}\cL D \var,\\
\label{hvar}
T_q\var(z,t)=P_q[\var]_q(\cdot,0)(z)+\sum_{i<q}R_{q-i}[\var]_i(z,\cdot)(t) ,\\
\tilde T\var=R_qB_0^1[\var]_0+\sum_{i>0}P_{i}[\var]_i.
\label{hvar+}
\end{gather}
\ele
\begin{proof} The formula \re{hvar+} is derived in Treves~\ci{Tr92}*{VI.7.12, p.~294} for  $q\leq n$, while \re{hvar}  is stated in~\ci{Tr92}*{VI.7.13, p.~294} for $q\leq m$. For the convenience of the reader, we derive them for all $q$.

% We need to
%know how sign changes when    the exterior derivatives interchanges with the integral
%operators in the homotopy formula, which is needed for a cancellation purpose.  For a differential form $\var $ in a manifold $M$ and a differential form $\phi $ in a manifold $Y$
%and a function $f$ in $M\times Z$, we define
%$$
%\int_{\zeta\in M}f(\zeta,z)\var(\zeta)\wedge\phi(z)=\phi(z)\int_{ M}f\var.
%$$
%Thus if $\psi(\zeta,z,y)$ is a differential form in $M\times \cc^n\times\rr^M$, then the exterior differential $d^0$ in $t$
%satisfies
%\al\label{d0tq-}
%d^0\int_{\zeta \in M}\psi(\zeta,z)\wedge\var(\zeta,z,t)
%&=(-1)^{\dim M }\int_Md^0 (\psi(\zeta,z)\wedge\var(\zeta,z,t))
%\\
%&=(-1)^{\dim M+ \deg_{\zeta,z}\psi }
%\int_M\psi(\zeta,z)\wedge d^0\var(\zeta,z,t),\nonumber%
%\end{align}
%where $\deg_\eta \psi$ is the degree in $\eta$ of a differential form $\psi(\eta)$.  To derive \re{hvar} we will apply \re{d0tq-} to  $M=[0,1]$. For \re{hvar+},
%we will apply \re{d0tq-} to the three cases:
%(i) $\psi=\Om^0_{q-1}(\zeta,z), M=D$; (ii) $ \psi=\Om_{q-1}^{01}(\zeta,z),   M=\pd D$; (iii) $
%\psi=\Om_{0,q-1}^1(\zeta,z), M=\pd D.$
%Recall that $\deg_{\zeta,z}\Om_q^i=2n-1$ and $\deg_{\zeta,z}\Om_q^{01}=2n-2$.
%Thus,  by \re{d0tq-} we   have
%\eq{d0tq}
%d^0P_q\var=-P_qd^0\var, \quad d^0B^1_0f=B^1_0d^0f.
%\eeq

Let us start with the integral representation of $[\var]_0$ in $|z|<r$.  Since $\var$ has total degree $q$,
then $\deg_x[\var]_0=q>0$.
We apply the Bochner-Martinelli formula for functions and the Poincar\'e lemma for $d^0$ to obtain
\eq{bmkp}
[\var]_0=B_0^1[\var]_0+P_1\db[\var]_0=(d^0R_qB_0^1[\var]_0+R_{q+1}d^0B^1_0[\var]_0)+P_1\db[\var]_0.
\eeq
Since $\db_zB_0^1(\zeta,z)=0$, we have
$
d^0R_qB_0^1[\var]_0=\cL DR_qB_0^1[\var]_0$.
Combining with $d^0[\var]_0=[\cL D\var]_0$, we express \re{bmkp} as
 \eq{bmkp+}
[\var]_0= \cL DR_qB_0^1[\var]_0+R_{q+1}B^1_0[\cL D\var]_0+P_1\db [\var]_0.
\eeq
%while the second identity follows from \re{d0tq} and $d^0[\var]_0=[\cL D \var]_0$.

Assume that $j>0$. By the  Koppelman-Leray formula, we obtain
\eq{varj1}
[\var]_j=\db P_j[\var]_j+P_{j+1}\db[\var]_j=\cL DP_j[\var]_j-d^0P_j[\var]_j+P_{j+1}\db[\var]_j.
\eeq
 By \re{d0p} and $d^0[\var]_j=[\cL D \var]_j-\db[\var]_{j-1}$, we obtain
\aln
\sum_{j>0}\left(-d^0P_j[\var]_j+P_{j+1}\db[\var]_j\right)&=\sum_{j>0}\left(P_j[\cL D \var]_j -P_{j}\db[\var]_{j-1}+P_{j+1}\db[\var]_j \right)\\
&= -P_{1}\db[\var]_{0}+\sum_{j>0}P_j[\cL D \var]_j.
\end{align*}
 Here we have used $P_{n+1}=0$.
Using \re{bmkp+} and \re{varj1}, we obtain
\aln
\var=\cL DR_qB_0^1[\var]_0+R_{q+1}B^1_0[\cL D \var]_0+\sum_{j>0}\cL D  P_j[\var]_j+\sum_{j>0} P_j[\cL D \var]_j,
\end{align*}
which gives us \re{vdhv+} and \re{hvar+}.

% Let us  recall the integration of a double form~\ci[p.]{CSzeon}.
%In general for a differential form $\phi$ in $(\theta,z,x)\in [0,1]\times\cc^n\times\rr^M$,  decompose
%$$
%\phi(\theta,z,x)=f(\theta,z,x)\, d\theta\wedge a(z,x)+b(\theta,z,x),
%$$
%where $a(z,x), b(\theta,z,x)$ are differential forms do not contain $d\theta$. Thus we can define
%$$
%\int_{[0,1]}\phi(\theta,z,x)=a(z,x)\int_0^1f(\theta,z,x)\, d\theta.
%$$
%We have
%\eq{dicancel}
%\db_z\int_{[0,1]}\phi(\zeta,z,x)=-\int_{[0,1]}\db_z\phi(\theta,z,x).
%\eeq

Suppose that $\var$ has the mixed type $(0,q)$.
 By the Poincar\'e lemma we obtain
\aln
\var&=[\var]_q+\sum_{i<q}(d^0R_{q-i}[\var]_i+R_{q+1-i}d^0[\var]_i)=[\var]_q+\sum_{i<q}\cL DR_{q-i}[\var]_i+R_{q+1-i}\cL D[\var]_i.\end{align*}
Here we have  used $\db R_{q-i}[\var]_i=-R_{q+1-i}\db[\var]_i$ for $i<q$ by \re{d0p}. We can express
$$
\sum_{i<q}R_{q+1-i}\cL D[\var]_i=\sum_{i<q}R_{q+1-i}([d^0\var]_i+[\db\var]_{i+1})= -R_1[d^0\var]_{q}+\sum_{i<q}R_{q+1-i}[\cL D\var]_{i+1},
$$
because $[\db\var]_0=0$. We have
 $$
 [\var]_q(z,t)-R_{1}d^0[\var]_q(z,\cdot)(t)=[\var]_q(z,0).
 $$
 We now apply the  Koppelman-Leray formula to express
 $$
 [\var]_q(\cdot,0)=\db P_q[\var]_q(\cdot,0)+P_{q+1}\db[\var]_q(\cdot,0)=\cL DP_{q}[\var]_q(\cdot,0)+P_{q+1}([\cL D\var]_{q+1}(\cdot, 0)).
  $$
Combining the identities, we get \re{vdhv} and \re{hvar}. %\qedhere
\end{proof}

We first consider the case that $N=2n$. We  recall some estimates for the homotopy formulae in Webster~\ci{We89}, adapting them for the parametric version.  Note that $P^0_q\var$ are integrals of the
potential-theoretic type
$$
L_0f(x)=\int_{B^{2n}_\rho}\pd_{\xi_i} p(\xi,x)f(\xi)\, dV(\xi),
$$
where $\xi=(\RE\zeta,\IM\zeta)$ and $x=(\RE z,\IM z)$,
 $p(\xi,x)=|\xi-x|^{2-2n}$ and $f$ is the real or imaginary part of the coefficients of $\var$.

We first consider the case  $\rho=1$. The general case will be archived by a dilation. It is a classical result that
$$
|L_0f|_{1;\all}\leq C_\all|f|_{1;0}, \quad 0<\all<1.
$$
Thus, we have
\eq{L0all}
|\{L_0f^\la\}|_{1;\all,\all}\leq C_\all|\{f^\la\}|_{1;0,\all}, \quad 0<\all<1.
\eeq
Using a smooth cut-off function $\chi$ which equals $1$ in a domain $B^{2n}_{1-\theta/2}$. We also assume that $\chi$ has compact support in $B^{2n}_{1-\theta/4}$ and $|\chi|_{r}\leq C_r\theta^{-r}$. Decompose $f=f_0+f_1$
for $f_0=\chi f$.  Then
$$
\|f_0\|_{1;r,s}\leq \f{C_{r}}{\theta^{r}}\|f\|_{1;r,s}.
$$
 We can also write
$$
L_0f_0(x)=\int_{B^{2n}_2}\pd_{\xi_i}p(\xi,x)f_0(\xi)\, dV(\xi).
$$
By a classical estimate for the H\"{o}lder norm (see~Gilbarg-Trudinger~\ci{GT01}*{Lemmas 4.1 and 4.4}),  we obtain
 \gan
 |L_0f_0|_{1; r+1}\leq |L_0f_0|_{2; r+1}\leq C_r|f_0|_{1;r}, \quad r\in(0,\infty)\setminus\nn,\\
 |L_0f_0|_{1; r}\leq |L_0f_0|_{2; r}\leq C_r|f_0|_{1;[r]}, \quad r\geq0.
 \end{gather*}
 Here $C_r$ depends on $1/{\{r\}}$ also.
When $f^\la $ is a family of functions on $D$, we decompose $f^\la =f_0^\la +f_1^\la $ with $f_0^\la =\chi f^\la $.  By the linearity of $L_0$, we can also estimate derivatives in $\la$ easily. We have
 \eq{L0fs}\nonumber %
 |\{L_0f^\la _0\}|_{1;r+1,s}\leq \f{C_r}{\theta^r}|\{f^\la \}|_{1;r,s}, \quad  s\in\nn, \quad r\in(0,\infty)\setminus\nn,\quad M=0.
 \eeq
 In particular, combing with \re{L0all}, we can obtain
 $$
 |\{L_0f^\la _0\}|_{1;r,s}\leq \f{C_r}{\theta^{[r]}}|\{f^\la \}|_{1;[r],s}, \quad  s\in\nn, \quad r\in[0,\infty),\quad M=0.
 $$
   Without gaining derivatives, it is straightforward that
$$
 |\{L_0f^\la _0\}|_{1;r,s}\leq \f{C_r}{\theta^{[r]}}|\{f^\la \}|_{1;r,s}, \quad    \quad r,s\in[0,\infty),\quad M\geq0.
 $$
 For $L_0f^\la _1$, we differentiate the integrand directly and obtain
$$
|\{L_0f^\la _1\}|_{1-\theta;r+1,s}\leq \f{C_r}{\theta^{r}}|\{f_1^\la \}|_{1;0,s}\leq \f{C_r}{\theta^{r}}|\{f^\la \}|_{1;0,s},\ s\in\nn, \
r\in(0,\infty)\setminus\nn, \  M=0.
$$
Here we have used, for $|z|\leq1-\theta$ and $r\not\in\nn$,
$$
\int_{1-\theta/2<|\zeta|<1}\f{1}{|\zeta-z|^{2n+r}}\, dV(\zeta)\leq   \f{C_r}{\theta^r},
\quad r\in(0,\infty)\setminus\nn.
$$

The boundary term $P_{q-1}^{01}\var$ in the homotopy formula is a sum of
$$
L_1f^\la (z)=\int_{\pd B^{2n}_1} \f{(\zeta_i-z_i)\ov\zeta_jf^\la (\zeta)}{|\zeta-z|^{2(n-1-k)}(\ov\zeta\cdot(\zeta-z))^{k+1}}\, dS(\zeta),\quad 0\leq k\leq n-2,
$$
where $f^\la $ is a coefficient of $\var^\la $, and $dS$ is a $(n,n-1)$-form in $\zeta$ with constant coefficients. When $z\in B^{2n}_1$ and $\zeta\in\pd B^{2n}_1$ are close to a boundary point in $\pd B^{2n}_1$, we may choose local coordinates $s=(s_1,s')\in\rr^{2n-1}$ so that  $|\ov\zeta\cdot(\zeta-z)|\geq (\delta+ |s_1|+|s|^2)/C$ and $|\zeta-z|\geq(|s|+\delta)/C$ for $\del=1-|z|$.   Then an $[r]$-th order derivatives of $L_1f^\la$ is a linear combination of
$$
L_1^K(z)=\int_{\pd B^{2n}_1}\f{(\zeta-z,\ov\zeta-\ov z)^Kp_K(\zeta)f^\la (\zeta)}{|\zeta-z|^{2(n-1-k+ m _1)}(r_\zeta\cdot(\zeta-z))^{k+1+ m _2}}\, dS(\zeta)
$$
where $|K|= m _1+1$ and $[r]= m _1+ m _2$.  Let $g(\zeta,z)$ be the integrand in $L_1^K$. Let $\zeta\in\pd B^{2n}_1$. For $|z|<1-\theta$, we have
\aln
|g(\zeta,z)|&\leq\f{C|f|_{1;0,0}}{|\zeta-z|^{2(n-k)-3+m_1}|r_\zeta\cdot(\zeta-z)|^{k+1+ m _2}}\\
&\leq\f{C\theta^{-[r]}|f|_{1;0,0}}{|\zeta-z|^{2(n-k)-3}|r_\zeta\cdot(\zeta-z)|^{k+1}}\\
&\leq \f{C'\theta^{-[r]}|f|_{1;0,0}}{|s|^{2n-3}(|s_1|+|s'|^2)}.
\end{align*}
Also $\int_0^1\int_0^1\f{r^{2n-3}}{(s_1+r)^{2n-3}(s_1+r^2)}\, ds_1dr<\infty$. This shows that $|L_1|_{1-\theta;[r],s}\leq C\theta^{-[r]}|f|_{1;0,s}$.
For the H\"older ratio in $z$ variables, let $\all=\{r\}>0$ and $z_1,z_0\in B^{2n}_{1-\theta}$. If $|z_1-z_0|>\theta$,
we get $|L_1^K(z_1)-L_1^K(z_0)|\leq (|L_1^K(z_1)|+|L_1^K(z_0)|)|z_1-z_0|^\all/\theta^\all\leq C|z_1-z_0|^\all\theta^{-[r]-\all}|f|_{1;0,0}$.  Assume that $|z_1-z_0|\leq\theta$, and let $z(\la)=(1-\la)z_0+\la z_1$. We still have $1-|z(\la)|\geq\theta$. As we just proved, the gradient $\nabla g(\zeta,z)$ in the $z$-variables satisfies
$$
|\nabla_zg(\zeta,z(\la))|\leq C'\theta^{-[r]-1}|f|_{1;0,0}{|s|^{-2n+3}(|s_1|+|s'|^2)^{-1}}.
$$
We have $|z_2-z_1|\leq|z_1-z_1|^\all\theta^{1-\all}$. Then by the gradient estimate and integration, we obtain
\eq{}\nonumber
|P^{01}_q\var|_{1-\theta;r+1,s}\leq C_{r}\theta^{-r}|\var|_{ 1;0,s}, \quad r\not\in\nn, \quad s\in\nn, \quad M=0.
\eeq
Without gaining derivatives, we can also verify
\eq{}\nonumber
|P^{01}_q\var|_{1-\theta;r,s}\leq C_{r}\theta^{-[r]}|\var|_{ 1;0,s}, \quad r,s\in[0,\infty),\quad M\geq0.
\eeq

We now consider the estimates on $B^{2n}_\rho$ with $\rho<C$. Let $ z=\tau(\tilde z)=\rho\tilde z $. Then $\tau_\rho\colon B^{2n}_1\to B^{2n}_\rho$. For a $(0,q)$ form $\var_q$, we   have
$$
|\tau_\rho^*\var_q|_{\rho';r,s}\leq C_r\rho^q|\var_q|_{\rho'\rho;r,s}, \quad |(\tau_\rho^{-1})^*\tilde\var_q|_{\rho';r,s}\leq C_r\rho^{-q-r}|\tilde\var_q|_{\rho'\rho;r,s},\quad\rho<C.
$$
Note that $\tau_\rho\times\tau_\rho$ preserves   $\Om_q^{0}$ and $\Om_q^{01}$. Now we get
$$
P_{B^{2n}_{\rho}}=(\tau_\rho^{-1})^*P_{B^{2n}_{1}}\tau_\rho^*.
$$
Then we have
\aln
|P_{B^{2n}_\rho}\var_q|_{1-\theta;r+1,s}&\leq C_r\rho^{-q+1-r-1}|P_{B^{2n}_{1}}\tau_\rho^*\var_q|_{1-\theta;r+1,s}
\leq C_r\rho^{-r}\theta^{-r}|\var_q|_{\rho;r,s},\\
|P_{B^{2n}_\rho}\var_q|_{1-\theta;r,s}&\leq C_r\rho^{-q+1-r}|P_{B^{2n}_{1}}\tau_\rho^*\var_q|_{1-\theta;r,s}
\leq C_r\rho^{1-r}\theta^{-[r]}|\var_q|_{\rho;[r],s},
\end{align*}
where the first estimate requires $ s\in\nn $ and $M=0$. In summary we have obtain, for $0<\rho<C$,
\eq{}\label{3p7}
|P_q\var|_{ {(1-\theta)\rho};r+1,s}\leq C_{r}(\rho\theta)^{-r}|\var|_{ \rho;r,s},\quad
r\not\in\nn, \  s\in\nn, \ M=0.
\eeq
The above estimate gains one derivative in $z$-variables. Without the gain, we obtain for $0<\rho<2$
\eq{}\label{3p70}
|P_q\var|_{(1-\theta)\rho;r,s}\leq C_{r}\rho^{1-r} \theta^{-[r]}|\var|_{\rho;[r],s},\quad
r,s\in[0,\infty).
\eeq
%Note that when $r\in\nn$,  by a theorem of Tr\`{e}ves~\ci[]{Trnitw} we can state
%$$
%|P_q\var|_{\rho;r,s}\leq C_{r,\rho}|\var|_{\rho;r,s}.
%$$
%This estimate will not be used.

 Note that the decomposition $\var=\sum[\var]_j$ does not commute with $\cL D$.
 Obviously, the decomposition  respects the estimates; namely
 $$
 |\var|_{\rho;r,s}=\max_j|[\var]_j|_{\rho;r,s}.
 $$
 Therefore, we also have
 $$
 |T\var|_{\rho;r,s}\leq\sum_j|P_j[\var]_j|_{\rho;r,s}+|R_qB_0^1[\var]_0|_{\rho;r,s}.
 $$
 Combining two estimates \re{3p70} and \re{3p13}, we have the following.
 \pr{}
Let $D_\rho=B_\rho^{2n}\times B_\rho^M\subset\cc^n\times\rr^M$. Let $\var$ be a differential form on
$\ov D_\rho$ of mixed type $(0,q)$ with $1\leq q\leq n+m$. Let  $\var\in\cL C_*^{r,s}({\ov D_\rho})$. We have
\begin{gather}\nonumber
|T_q\var|_{D_{(1-\theta)\rho};r+1,s}\leq
C_{r}(\rho \theta)^{-r} |\var|_{D_\rho;r,s},\ s\in\nn,\ r\in(0,\infty)\setminus\nn,\ M=0;\\
\label{Pqva}
|T_q\var|_{D_{(1-\theta)\rho};r,s}\leq
C_{r}\rho^{1-r}\theta^{-[r]} |\var|_{D_\rho;r,s},\quad r,s\in[0,\infty),\quad M\geq0.
 \end{gather}
Assume further that  $\cL D\var=0$. Then $\cL DT_q\var=\var$.
\end{prop}

%Note that the estimate \re{Pqva} holds on $\ov {B_{(1-\theta)\rho}^{2n}}\times \ov {B_\rho^{M}}$.
By estimating in the $C_*^{r,s}$ norm on $B_{\rho}^{2n}\times B_\rho^M$, we have taken the advantage that the chain rule is not used in
the construction of the homotopy formula.

%
% As discussed above, the decomposition $\var=\sum[\var]_j$ does not commute with $\cL D $.
% Obviously, the decomposition $\var=\sum[\var]_j$ respects the estimates; namely
% $$
% |\var|_{\rho;r,s}=\max_j|[\var]_j|_{\rho;r,s}.
% $$
% Therefore, we also have
% $$
% |T\var|_{\rho;r,s}\leq\sum_j|P_j[\var]_j|_{\rho;r,s}+|R_qB_0^1[\var]_0|_{\rho;r,s}.
% $$
% Combining two estimates \re{3p70} and \re{3p13}, we have the following.
% \pr{} Let $k=0,1,\dots, \infty$. Let $D'$ be a bounded convex domain $\in\rr^M$ and let $D$ be a bounded strictly pseudoconvex domain in $\cc^n$ of $C^{k+2}$ boundary. For each $q>0$, there is
% a   solution operator $S_q$ such that $\cL DS_qf=f$ on $D'\times D$, provided $\cL Df=0$ on $D'\times D$. Moreover,
% $|S_qf|_{\ov D\times\ov{D'}; m }\leq C_k|f|_{\ov D\times\ov D'; m }$ for all $ m \in\nn$ with $ m \leq k$ and
% $\pd_tS_qf(x,y)$ for any relatively compact subdomain $D'$ of $D$.\end{prop}
% \begin{proof}
% \end{proof}
%
%
%Note that the estimate \re{Pqva} holds on $\ov {B_{(1-\theta)\rho}^{2n}}\times \ov {B_\rho^{M}}$.
%By estimating in the $C_*^{r,s}$ norm on $B_{\rho}^{2n}\times B_\rho^M$, we have taken the advantage that the chain rule is not used in
%the construction of the homotopy formula.

\setcounter{thm}{0}\setcounter{equation}{0}
\section{The proof    for a family of complex structures
}\label{sec5}

In this section, we will present our first rapid iteration proof for the special case. The arguments
do not involve the Nash-Moser smoothing methods, due to the gain of a full
derivative in the estimates for the Koppelman-Leray homotopy formula. However, introducing parameter requires
an additional argument to obtain rapid iteration in higher order derivatives via the rapid convergence
in low order derivatives. This include the $\mathcal C^\infty$ case. We will use the rapid methods from Gong-Webster~\cites{GW10, GW12}.
%Here we use some techniques from to prove the following.
%
%\pr{wethm}  Let $r\in (1,\infty]\setminus\nn$  and let $j\in\nn\cup\infty$. Suppose that $r>j+1$.
% Let $\{X_1^ t,\dots, X_n^\la \}\in\cL C^{r,j}(D )$ define a family of formal integrable complex structures on $D\subset\rr^{2n}$.
% Near each $p\in D$
% there exist an open subset $U$ of $D$  containing $p$
% and diffeomorphisms $\{F^\la \} \in\cL C^{r+1,j}(\ov U )$ transforming the complex structures into the standard complex structure on $\rr^{2n}$.\epr

We will apply rapid iteration methods several times to prove our results. Therefore, it will be convenient to
prove a general statement for the rapid iteration. % Let us first illustrate by a lemma.
\begin{defn}\label{def6.1}
Following Webster~\ci{We91} we say that the sequence $ L_j$ grows  $($at most$)$ {\it linearly} if
\eq{linear-gr}
0\leq L_{j}\leq e^{P(j)}, \quad j\geq0
\eeq
for some polynomial $P$ of non-negative coefficients.
We say that $e_j$ {\it converges rapidly} $($to zero$)$, if
\eq{rapid-decay}
0\leq e_{j}\leq \hat e_0 b^{\kappa^j-1}, \quad \kappa>1, \quad 0< b<1, \quad j\geq0.
\eeq
Here $\hat e_0$ is a finite number to be adjusted.
\end{defn}
 A precise notion is that the $L_j$ grows at most {\it exponentially} and the $e_j$ converges to $0$ {\it double} exponentially.
Fix a positive constant $a$. Obviously, if $L_j$ grows linearly,  so does $L_j^a$.
And $e_j^a$ converges rapidly with $e_j$.
\le{defrapid} Let $P$ be a polynomial in $t\in\rr$ of non-negative coefficients and suppose that the sequence   $L_j$ of numbers satisfies \rea{linear-gr}. Assume that the numbers $e_j,\kappa,b$ satisfy \rea{rapid-decay}. \bppp
\item Let $Q$ be a real polynomial in $t\in\rr$. There exists $\hat e_0'>0$, which depends only on $\kappa,b, Q$,  so that if the $\hat e_0$ in \rea{rapid-decay} is adjusted to satisfy  $\hat e_0\leq \hat e'_0$, then the sequence $e_j$ satisfies
\eq{}\nonumber
e_j\leq e^{-Q(j)}, \quad j\geq0.
\eeq
$($Note that $e_0\leq\hat e_0$ by \rea{rapid-decay} and consequently $e_0\leq \hat e_0'$ too.$)$
\item Suppose that $M_j$ is a sequence of non-negative numbers satisfying
\eq{}\nonumber
M_{j+1}\leq L_jM_j, \quad j\geq0.
\eeq
Then $M_{j+1}\leq M_0 e^{P_1(j)}$ for $P_1(j)=P(0)+\dots+P(j)$ when $j\geq0$, where   $P$  is in \rea{linear-gr}. Set $P(-1):=0$. In particular, $M_j$ grows linearly.
\item Suppose that $a_j$ is a sequence of non-negative   numbers such that
\eq{}\nonumber
0\leq a_{j+1}\leq L_je_ja_j.
\eeq
Let $b_1>b^{1/{(\kappa-1)}}$ for $b$   in \rea{rapid-decay}.  There is a constant $C$, depending only on $\hat e_0,b,b_1$, and $P$, such that
\eq{}\nonumber
a_{j}\leq Ca_0b_1^{\kappa^j}.
\eeq
In particular $a_j$ converges to zero rapidly.
\eppp
\ele
\begin{proof} (i). Obviously, there exists $j_0$ so that $b^{\kappa^j-1}<e^{-Q(j)}$ for $j>j_0$. Choose $0<\hat e'_0<1$ sufficiently small so that
$
\hat e'_0b^{\kappa^j-1}\leq e^{-Q(j)}
$
for $0\leq j\leq j_0$.  We get (i).
For (ii) it is immediate that $M_{j+1}\leq M_0e^{P_1(j)}$. Since
$$
1+2^d+\dots+j^d\leq \f{1}{d+1}(j+1)^{d+1}
$$
then $P_1(j)\leq Q(j)$, $j\in\nn$,  for a polynomial $Q$ of degree $d+1$.

(iii) We have
$$ a_{j+1}\leq a_0 \hat e_0^{j}\prod_{i=0}^{j-1}(b^{\kappa^i-1}e^{P(i)})\leq a_0\hat e_0^{j} e^{P_1(j-1)}b^{\frac{\kappa^{j}-1}{\kappa-1}-j}.
$$
Since $b_1>b^{1/{(\kappa-1)}}$, we can find $j_0$ so that for $j> j_0$,
$$
 \hat e_0^{j} e^{P_1(j)}b^{\frac{\kappa^{j}-1}{\kappa-1}-j}<b_1^{\kappa^j}.
$$
Take $C>1$ so that the above left side is less than $Cb_1^{\kappa^j}$ for $0\leq j\leq j_0$.
\end{proof}

The following is one of main ingredients in the KAM rapid iteration procedures.
\pr{KAM}  Let $K_j$ be a  sequence  of numbers satisfying
$$
0\leq K_j\leq e^{P(j)}, \quad j\geq0
$$
where $P$ is a polynomial of non-negative coefficients.
  Suppose that the sequence $a_i$  satisfies  \eq{aj+1-} %\nonumber %
a_{j+1}\leq K_j(a_j^{\kappa }+a_j^{\kappa_1}), \quad a_j\geq0, \quad j\geq0.
\eeq
Suppose that $\kappa_1>\kappa>1$. Set $C_\infty^*=\lim_{j\to+\infty} C_j^*$ for
$$
 C_j^*:=\exp\Bigl\{ \kappa^{-1}(\ln 2+K(0))+\sum_{ m =1}^j\kappa^{- m }(\ln 2+ P( m ))\Bigr\}, \quad 0\leq j<\infty.
 $$
 If $j\geq0$ and $0\leq \cL C^*_j a_0\leq 1$, then
%There exist $\hat a_0$, which depend only on $\kappa$ and  $P$, such that
\ga\label{aj1cs}
  a_{j+1}\leq (\cL C^*_ja_0)^{\kappa^j}.
\end{gather}
In particular, if $0\leq \cL C^*_\infty a_0\leq1$, then $a_{j+1}\leq(C_\infty^*a_0)^{\kappa^j}$.
\epr
\begin{proof} Note that $C_j^*$ increases with $j$ and $\cL C^*_j\geq1$, while $C_\infty^*<\infty$ since
$$
\sum_{m=1}^\infty\kappa^{-m}m^d\leq\int_1^\infty \f{t^d}{\kappa^t}\, dt<\infty.
$$
 Suppose that $0\leq \cL C^*_ja_0\leq1$. Then $a_0\in[0,1]$ and \re{aj+1-} imply that
$$
a_1\leq 2e^{P(0)}a_0^\kappa=(\cL C^*_0a_0)^{\kappa}.
$$
 Suppose that  \re{aj1cs} is verified for $j< m $.
This implies that $a_ m \leq1$. By \re{aj+1-} we get
\begin{equation*}
a_{ m +1}\leq 2K_ m  a_ m ^{\kappa}\leq 2e^{P( m )}(\cL C^*_{ m -1}a_0)^{\kappa^{ m }}=(\cL C^*_ m  a_0)^{\kappa^{ m }},
\end{equation*}
where the last identity follows from the definition of $C_m^*$.
\end{proof}

\medskip
\begin{proof}[Proof of \rpa{pnl}]  Let us recall the proposition.
We are in the special case of a family of complex structures on $\cc^n$, defined near the origin
 by the adapted vector fields
\ga\label{2p1}
\quad Z^\la _{\ov\all}=\pd_{\ov\all}+A_{\ov\all \beta}^\la (z)\pd_\beta, \quad
1\leq\all\leq n, \quad A^\la(0)=0, \quad |A^\la|<1/{C_0}.
\end{gather}
  We want to find a family of transformations
 \eq{}\nonumber
 F^\la\colon \hat
z=z+f^\la (z),\quad   f^\la (0)=0, \quad |\pd_z^1f|<1/{C_0}
\eeq
which transforms the complex structures into the standard one in $\cc^n$.

Let us assume that $n\geq2$. When $n=1$ the following proof with simplifications remains valid; see \rrem{1drem} following the proof.

The vector fields \re{2p1} are adapted so that   the integrability condition has a simple form
\al\label{2p2}
[Z^\la _{\ov\all},Z^\la _{\ov\beta}]&=
(Z^\la _{\ov\all}A^\la _{\ov\beta \gaa}-Z^\la _{\ov\beta}A^\la _{\ov\all \gaa})\pd_\gaa=0.
\end{align}

To simplify notation, let us drop $\la$ in $A^\la, f^\la$, etc.
We will use some abbreviations. If $A,B$ are differential forms,   $\langle A,B\rangle$ denotes a differential form of which the coefficients are finite sums of $ab$ with $a$ (resp. $b$) being a coefficient of $A$ (resp. $B$). We will also further abbreviate $\langle A,B\rangle$ by $AB$ sometimes.
For  $\{A^\la\}, \{B^\la\}$, the $\langle A,B\rangle$ denotes the family $\{\langle A^\la,B^\la\rangle\}$, while $AB$ denotes the family $\{A^\la B^\la\}$.

% If $A_1,A_2,
%\ldots A_i$ are matrices of functions or collections of functions,
% we denote by $[A_1A_2\dots A_i]$  the collection of products $a_1a_2\cdots a_i$
%  with $a_j\in A_j$.
%  We denote by $\pd A$ the collection of first-order derivatives
%  of $A$.
%We denote by $\Phi([A_1A_2\cdots A_i])$   a linear combination of elements
%in $[A_1A_2\cdots A_i]$. For simplicity, reoccurring $\Phi$
%might be different functions.
Following \ci{We89} we  form the following differential forms by using the coefficients of
the adapted vector fields. With an abuse of notation, define
$$
A_\beta=A_{\ov\all \beta} d\ov z_{\all},\quad 1\leq\beta\leq n.
$$
 Then   the integrability condition \re{2p2} takes the form
\ga\label{2p3}
\db A =\langle A,\pd A\rangle.
\end{gather}  Let us  compute the new vector fields after a change of coordinates by $F$.  We have
 \al\nonumber
F_*Z_{\ov\all}&=
(\del_{\ov\all \ov\beta}+A_{\ov\all \gaa}\pd_\gaa\ov  f_ {\beta})
\pd_{\ov{\hat z}_\beta}+
(\pd_{\ov\all}f_\beta+
A_{\ov\all \beta}+A_{\ov\all \gaa}\pd_\gaa f_\beta)\pd_{\hat z_\beta}.
\end{align}
The new
adapted frame for $F_*S$   still has the form
$$
\widehat Z_{\ov\all}=\pd_{\ov\all}+ \widehat A_{\ov\all \beta}\pd_\beta
$$ where the new coefficients $\widehat{A}$ can be computed via
\ga\label{2p6}
C_{\ov\all \ov\beta}\widehat Z_{\ov\beta}=F_*Z_{\ov\all},\quad C_{\ov\all \ov\beta}\circ F
=\delta_{\ov\all\ov \beta}+ A_{\ov\all \gaa}\pd_\gaa \ov f_\beta,\\
\nonumber
(C_{\ov\all \ov\gaa}\widehat A_{\ov\gaa \beta})\circ F=\pd_{\ov\all}f_\beta+
A_{\ov\all \beta}+ A_{\ov\all \gaa}\pd_\gaa f_\beta.
\end{gather}
  Here $A$ denotes $(A_1,\ldots, A_n)$. Then $F$  transforms $\{Z_{\ov\all}\}$ into the span of $\{\pd_{\ov\beta}\}$ if and only if
$\{\widehat A_\gamma\}$  are zero, i.e.
\ga \label{2p7}
\db f+A+\langle A,\pd f\rangle=0.
\end{gather}
If $\widehat A$ is not zero,  let us  express it in terms of $A$. For $\hat z=F(z)$ and $\widehat A_\beta=\widehat A_{\ov\all\beta}d\ov{\hat z_\all}$, it is convenient to use
$$
\widehat A_\beta\circ F:=\widehat A_{\ov\all \beta}\circ Fd\ov  z_\all.
$$
Then the new coefficients $\widehat A$ are given by
\ga\label{2p8}
(C\widehat A)\circ F=\db f+A+\langle A,\pd f\rangle.
\end{gather}

While thinking of solving  $f$ for the non-linear equation \re{2p7},  we use the homotopy formula to find an approximate solution. We then iterate to solve the equation eventually.
For the approximate solution, we  apply  the homotopy formula as in~\ci{We89}.
%Let $B^{2n}_\rho$     be the ball  of radius $\rho$ in $\cc^n$.
On $B_\rho^{2n}$,  we have
the homotopy formula
 \ga \nonumber%\label{2p9}
\var =\db P_{1}\var
 +P_{2}\db \var.
\end{gather}
To find approximate solutions to \re{2p7}, we take
\ga\label{2p11}
f_\beta=-P_1A_\beta+(P_1A_\beta)(0).
\end{gather}
By the homotopy formula, where $\var$ is a component of   $A$,  and the integrability condition \re{2p3} we write
$
\db f+A+\langle A,\pd f\rangle$ as $P_2 \langle A,\pd A\rangle+\langle A,\pd f\rangle.
$
Thus \re{2p8} becomes
\al \label{2p12}
(C\widehat A)\circ F=\langle A,\pd f\rangle+P_2\langle A,\pd A\rangle.
\end{align}
%Formally, we have $|f|\leq |A|$ and $|\hat A|\leq |A|^2$.
Without achieving $\widehat A=0$, we see that $\widehat A$ is written as products of two small functions. Hence, \re{2p11} is a good approximate solution.  We now estimate   $\widehat A$.

%In~\ci{Nifise}, Nirenberg applied real Frobenuis theorem first. This yields a preliminary normalization
%for $S\cap\ov S$ to achieve $a_i^\beta=b_i^l=0$. For another
%application of real Frobenius theorem to $S+\ov S$ to achieve $B_{\ov\all}^l=0$.
%However, each use of Frobenius theorem contribute a loss of derivatives when the structure
%is only finitely smooth. See  a theorem of F.~and R.~Nevanlina
%and an example. Therefore, we will  not use the real Frobenuis theorem to normalize the structures preliminarily.
%In our iteration argument for the $\cL C^\infty$ case, the loss of derivatives appears in the form that
%we cannot estimate lower derivatives without using higher order derivatives. However, when $x,y$ variables
%do not appear, i.e. $N=2n$, the loss of derivative cannot be avoided.
%Therefore, we will deal with the special case first. The proof  for the general case
%needs a more elaborated iteration scheme  involving the Nash-Moser smoothing operators.

Recall that \ga \nonumber %
|f|_{D;a,0}\odot |g|_{D';0,b}:=|f|_{D;a,0}|g|_{D';0,[b]}+
 |f|_{D;[a],0}|g|_{D';0,b},\\  \nonumber % \label{Aqrsm}
Q^*_{D,D';a,b}(f,g) := |f|_{D;a,0}\odot |g|_{D';0,b}+|g|_{D';a,0}\odot |f|_{D;0,b}, \\
\nonumber % \label{Aqrsfgm}
Q_{D,D';r,s}(f,g):=\sum_{k=0}^{\mfl{s}}Q^*_{D,D';r-j,j+\{s\}}(f,g),\quad [r]\geq [s],\\
\hat Q_{D,D';r,s}(f,g):=\|f\|_{D;r,s}|g|_{D';0,0}+
|f|_{D;0,0}\|g\|_{D';r,s}+Q_{D,D';r,s}(f,g).%+|f|_{r,0}|g|_{0,s}+|f|_{s,0}|g|_{0,r}.
\label{Aqrsfg+m}\end{gather}
By \rp{phifi} we have  the product rule
\ga\label{recallq}
\left\|\left\{\prod_{i=1}^m f_i^\la \right\} \right\|_{a,b}\ple\sum_{i\leq m}\|f_i\|_{a,b}\prod_{ \ell \neq i}|f_ \ell |_{0,0}+
\sum_{i<j}Q_{a,b}(f_i,f_j)\prod_{ \ell\neq i,j}|f_ \ell |_{0,0},\\
\label{recallq+}\|\prod_{i=1}^{m-1} \phi_i(f_i^\la  )\|_{a,b}\ple\sum_{i<m}\|f_i\|_{a,b} \prod_{ \ell \neq i}|f_ \ell |_{0,0} +\sum_{i\leq j}Q_{a,b}(f_i,f_j)\prod_{ \ell\neq i,j}|f_ \ell |_{0,0},
\end{gather}
for $m\geq 2, a\geq b$, and $\phi_i$ are $\cL C^{[a]+1}$ functions satisfying $\phi_i(0)=0$.  Here $Q_{a,b}$ is defined by \re{Aqrsfg} and
 for the rest of the paper, by $A\ple B$ we mean that $A\leq CB$ for some constant $C$.   We will simply  use
 % $Q_{r,s}(f,g)\ple Q_{r,s}^*(f,g)$ defined by
$$
Q_{a,b}(f,g)\ple |f|_{a,0}\odot |g|_{0,b}+|f|_{0,b}\odot |g|_{a,0}. %,\\
%\label{qest3}
%Q^*_{r,s}(f,g):=|f|_{r,0} |g|_{s-1,s-1}+|f|_{s-1,s-1} |g|_{r,0},\quad s+2\geq m\geq s\geq 3.
$$
In particular, we have
\ga
%\label{qest1}
%Q_{r,s}(f,g)\leq Q^*_{r,s}(f,g),\\
\label{qest2}
Q_{a,b}(f,g)\ple |f|_{a,0}|g|_{0,b}+|f|_{0,b} |g|_{a,0}. %,\\
%\label{qest3}
%Q^*_{r,s}(f,g):=|f|_{r,0} |g|_{s-1,s-1}+|f|_{s-1,s-1} |g|_{r,0},\quad s+2\geq m\geq s\geq 3.
\end{gather}

%Let us rewrite the  identities for new vector fields above, by including
%  parameter $\la$ this time.  We rewrite  \re{2p11}, \re{2p6} and \re{2p12}  respectively as
%  \ga
%  F^\la \colon \hat
%z=z+f^\la (z),\quad f^\la =-P_1A^\la +(P_1A^\la )(0),\nonumber%
%\\
%\label{4p2}
%C_{\ov\all \ov\beta}^\la \hat Z^ \la_{\ov\beta}=F_*^\la  Z^ \la_{\ov\all},\quad \cL C^ \la_{\ov\all \ov\beta}\circ F^\la
%={\delta_{\ov\all\ov\beta}}+ A^ \la_{\ov\all \gaa}\pd_\gaa\ov f^ \la_ {\beta},\\
%\label{4p3} (\cL C^ \la_{\ov\all\ov\gaa}\hat A^ \la_{\ov\gaa\beta})\circ F^\la =A_{\ov\all\gamma}^\la \pd_\gamma f_\beta^\la +P_2\langle A^\la ,\pd A^\la \rangle.
%\end{gather}

Let us derive estimates before we apply the rapid iteration.  We first assume that  $s\in\nn$ and
\eq{}\nonumber
\infty>r>s+1, \quad \{r\}\neq0.
\eeq
  Define $s_*=0$ for $s=0$, and $s_*=1$ for $s\geq1$. Define
\gan
r_0=s_*+1+\{r\}, \quad r_1=s+1+\{r\},\\
\tilde\rho_i=(1-\theta)^i\rho,  \quad i=1,2,3.
\end{gather*}
% and
%\eq{}
%Q^{**}_{r,s}(f,g):=\|f\|_{r,0}\|g\|_{r_*,s}+\|f\|_{r_*,s}\|g\|_{r,0}.
%\eeq
We first use  \re{3p7} to estimate the approximate solution $f$:
\begin{gather}\label{4p4}
|f|_{{\tilde\rho_1}; m  +1+\{r\},s}\leq K( m +2)|A|_{{\rho}; m +\{r\},s}.
\end{gather}
Here and in the following $K(a)$ denotes a constant of the form
\eq{Kadef}
K(a):=K(a,\theta)=C_a\theta^{-a}.
\eeq
For the  rest of the proof,
 the norms of $ A$ and its derivatives are computed on $B^{2n}_\rho$,
the norms of  $f$ for $F=I+f$ and its derivatives are computed on the domain $B^{2n}_{\rho_1}$, and
the norms of  $g$ for $G:=F^{-1}=I+g$ and its derivatives are computed on the domain $B^{2n}_{\rho_2}$. We will abbreviate
$\| A\|_{a,b}=\| A\|_{\rho;a,b}$. Thus, all norms of $A$ are on the domain $B^{2n}_\rho$.
%,\quad
%\|f\|_{r,s} :=\|f\|_{\rho_1;r,s},\quad \|u\circ F\|_{r,s} :=\|u\circ F\|_{\rho_1;r,s}.

Recall  that $\widehat Q_{r,s}(A,A)$ is defined by \re{Aqrsfg+m} via the
$\odot$ product. We define
\eq{} \nonumber %
Q_{r,s}^{**}( A, A):=\| A\|_{r_1,s}| A|_{r,0}+\| A\|_{ r_0,s_*}\| A\|_{  r,s},
\eeq
which   is more suitable to  use  \re{4p4} which gains one derivative.

We want to estimate $\hat A$ by its formula \re{2p12}, which is rewritten as
\eq{4p3noi}
 (C\widehat  A)\circ F= A\pd f+P_2( A\pd A).
\eeq
 Let us first estimate various quadratic terms $Q_{r,s}$. By \re{recallq} or  \re{qr-1s}, we have
\eq{qada} \nonumber %
Q_{\rho,\rho;r-1,s}(A,\pd A)\ple\widehat Q_{r,s}(A,A)\ple Q^{**}_{r,s}(A,A).
\eeq
From \re{4p4}, we deduce two estimates
\gan |f|_{\tilde\rho_1;1,s}\leq|f|_{\tilde\rho_1;{3}/{2},s} \leq K(2)\|A\|_{s+1,s},\\
|f|_{\tilde\rho_1;r+1,0} \leq K(r+2)|A|_{r,0}.
\end{gather*}
We use the last three inequalities to estimate the following quadratic terms:
\ga
\label{qadf}
Q_{\rho,\tilde\rho_1;r,s}(A,\pd f)\ple|A|_{r,0}|f|_{\tilde\rho_1;1,s}+|A|_{0,s}|f|_{\tilde\rho_1;r+1,0}\leq K(r+2)Q^{**}_{r,s}(A,A),\\
\label{qdfdf}
Q_{\tilde\rho_1,\tilde\rho_1;r,s}(\pd f,\pd f)\ple|\pd f|_{\tilde\rho_1;r,0}|f|_{\tilde\rho_1;1,s} \leq
K(2)K(r+2)Q^{**}_{r,s}(A,A).
\end{gather}
  Using  the product rule \re{recallq}  and \re{qadf}, we verify
\al
\label{4p5}\|A\pd \ov f\|_{{\tilde\rho_1};r,s}&\ple\|A\|_{r,s}|\pd \ov f|_{\tilde\rho_1;0,0}+ |A|_{0,0}\|\pd\ov f\|_{\tilde\rho_1;r,s}+ Q_{\rho,\tilde\rho_1;r,s}(A,\pd \ov f)\\
&\leq  K(r+2)Q_{r,s}^{**}(A,A), \nonumber\\
\label{4p5+}\| A\pd  A\|_{{\tilde\rho_1};r-1,s}&
 \ple  Q_{r,s}^{**}(A,A).
\end{align}

Recall that $K(a)=K(a,\theta)=C_a\theta^{-a}$.  Let
\eq{dfK0}
 K_0(r_0,\theta):={K(r_0+2,\theta)}\cdot\f{C_{2n}}{\theta}.
\eeq
To estimate the inverse of $F^\la =I+f^\la $, let us assume that
\eq{4p6} %\nonumber %
\|A\|_{{\rho};r_0,s_*}\leq  K_0^{-1}(r_0,\theta).
\eeq
Note that this implies that $Q_{r_0,s_*}( A, A)\leq 1/{C_n}$. Then by \re{4p4} and \re{4p5},  we have
\eq{4p6+} \nonumber %
\|f\|_{\tilde\rho_1; r_0+1,s_*}\leq \theta/{C_{2n}}.
%, \quad
%|A\pd\ov f|_{\tilde\rho_1;r_0,s_*}\leq 1/2.
\eeq
%Here $c_0$ and $k_0$ are so large that
%Now \re{4p4} implies that $|f|_{\tilde\rho_1;r_0,s_*}\leq
%\yt \tilde\rho_1\theta$.
Thus  we can use \rp{Agrsfp} or \rl{fg} to estimate $F$ and its   inverse map. For $1/4<\rho<2$, we have
\ga\nonumber
F^\la \colon B^{2n}_{\tilde\rho_i}\to B^{2n}_{\tilde\rho_{i-1}}, \quad i=1,2;\\ G^\la \colon B^{2n}_{ \tilde\rho_{2}}
\to B^{2n}_{ \tilde\rho_1},\quad
\nonumber F^\la \circ G^\la =I \quad\text{on
$B^{2n}_{ \tilde\rho_2}$}.
%+|f|_{1,0}Q_{r+1,s}'(\pd f,\pd_\la f)
\end{gather}

%By \re{4p6+}, \rp{grsf}, and \re{4p4}, we have
%\ga
%\|g\|_{\tilde\rho_1;r+1,s}\leq C_3 (\|f\|_{r+1,s}+Q'_{r+1,s}(f,f))
%\leq |A|_{r,s}+|A|_{r,0}|A|_{r_*,s}.
%\label{gr1s}\end{gather}
%for $G^\la =I+g^\la $. In particular,
%\eq{grs1}
%\|g\|_{r_*+1,s_*}\leq 2\|A\|_{r_*,s_*}\leq 1/2.
%\eeq

By \re{4p3noi} we get  $(C\widehat A)\circ F= A\pd f+T( A\pd A)$. Then \re{4p5}-\re{4p5+} yield
\al
\label{4p10+}
\|(C\widehat A)\circ F\|_{{\tilde\rho_1};r,s}&\leq K(r+2) (\|A\pd f\|_{{\tilde\rho_1};r,s}
+  \|A\pd A\|_{{\tilde\rho_1};r-1,s})\\
\nonumber&\ple K(r+2)Q_{r,s}^{**}(A,A).
%,\\
%\label{4p10++}
%|(C\hat A)\circ F|_{{\tilde\rho_1},r,j}&\leq C_5'\theta^{-s}|A|_{{\rho},r,j}^2.
\end{align}
% In what follows, we will restated
%the inequalities in their weaker form. They will be used to achieved the rapid convergence
%in lower order derivatives; in fact they will be applied to the case $k=1, j=0$ or $k=2,j=1$.
 By \re{2p6}, $C\circ F=I+ A\pd\ov f$. By  the product rule \re{recallq+}  and  \re{qadf}-\re{4p5},  we have
\al \label{4p11}
\|C^{-1}\circ F-I\|_{{\tilde\rho_1};r,s}&\ple    \|A\pd \ov f\|_{\tilde\rho_1;r,s}+Q_{r,s}(A\pd\ov f, A\pd\ov f)  \\
&\leq K( 2)K(r+2)Q_{r,s}^{**}(A,A).\nonumber
 \end{align}
% Here the last inequality is obtained from  \re{qadf}-\re{qdfdf}.
% This shows that
% \ga
%% |(C\hat A)\circ F|_{{\tilde\rho_1},r,s}+|C^{-1}\circ F-I|_{{(1-\theta)r},r,s}\leq  Q_{r,s}^{**}(A,A),\\
% |(C\hat A)\circ F|_{{\tilde\rho_1},r,0}+|C^{-1}\circ F-I|_{{(1-\theta)r},r,0}\leq  Q^{**}_{r,0}(A,A)\leq |A|_{r,0}|A|_{1+\{r\},0},\\
% |(C\hat A)\circ F|_{{\tilde\rho_1},0,s}+|C^{-1}\circ F-I|_{{(1-\theta)r},0,s}\leq  Q^{**}_{r_1,s}(A,A)\leq \|A\|_{r_0,s_*}\|A\|_{r_1,s}.
% \end{gather}
% Since $\|A\|_{r_0,s_*}\leq1$, then
%In particular,  by taking $s=s_*, r=r_0$ in the last two inequalities we get
%\ga\label{}
%\max\left\{\|C^{-1}\circ F-I\|_{\tilde\rho_1;r_0,s_*},\| (C\hat A)\circ F\|_{\tilde\rho_1;r_0,s_*}\right\} \leq\ct{3}{3}\|A\|_{ \rho;r_0,s_*}^2,\intertext{Thus, by $\hat \A=C\hat{\A}+(C^{-1}-I)(C\hat{\A})$, we get}
%\|\hat \A\circ F\|_{\tilde\rho_1;r_0,s_*}\leq\ct{3}{3}\|\A\|_{ \rho;r_0,s_*}^2.
%\end{gather}
%Here we may have to increase $k_0$ in \re{4p6} second time.
To estimate a new quadratic term, we express \re{4p10+}-\re{4p11} for a special case:
\al
\label{c-1r}
|(C^{-1}\circ F-I|_{\tilde\rho_1;r,0}&=|(I-A\pd \ov f)^{-1}(A\pd \ov f)|_{\tilde\rho_1;r,0}\\
& \leq K(2)K(r+2)|A|_{1,0}|A|_{r,0}, \nonumber\\
%|(C^{-1}\circ F-I|_{\tilde\rho_1;0,s}&\leq|(I-A\pd \ov f)^{-1}(A\pd \ov f)|_{\tilde\rho_1;\{r\},s}\\
%& \nonumber
%\leq K^2(2) |A|_{\{r\},s}|A|_{\{r\} ,0},\\
\label{c-1r1}|(C\widehat A)\circ F|_{\tilde\rho_1;r,0}&=| A\pd f+P_2( A\pd A)|_{\tilde\rho_1;r,0}\\
\nonumber &\leq K(r+2)|A|_{1,0}|A|_{r,0}.
%, \\
%|(C\hat A)\circ F|_{\tilde\rho_1;0,s}&\leq |\A\pd f+T(\A\pd\A)|{\tilde\rho_1;\{r\},s}\\
%&\leq   K^2(2) |A|_{1+\{r\},s}|A|_{\{r\} ,0}.
\end{align}
Also, a direct estimation by \re{4p4} yields
\al
%|(C^{-1}\circ F-I|_{\tilde\rho_1;r,0}&=|(I-A\pd \ov f)^{-1}(A\pd \ov f)|_{\tilde\rho_1;r,0}\\
%& \leq K(2)K(r+2)|A|_{1,0}|A|_{r,0},\nonumber\\
\label{c-1r2}|(C^{-1}\circ F-I|_{\tilde\rho_1;0,s}&=|(I-A\pd \ov f)^{-1}(A\pd \ov f)|_{\tilde\rho_1;0,s}\\
& \nonumber
\leq K^2(2) |A|_{1,s}|A|_{1 ,0},\\
%|(C\hat A)\circ F|_{\tilde\rho_1;r,0}&=|\A\pd f+T(\A\pd\A)|{\tilde\rho_1;r,0}\\
%\nonumber &\leq K(2)K(r+2)|A|_{1,0}|A|_{r,0}, \\
\label{c-1r3}|(C\widehat A)\circ F|_{\tilde\rho_1;0,s}&= | A\pd f+P_2( A\pd A)|_{\tilde\rho_1;0,s}\\
&\leq   K(2) |A|_{1,s}|A|_{1 ,0}.\nonumber
\end{align}
Since $|A|_{r_0,s_*}\leq1$ then from \re{c-1r}-\re{c-1r3} we get a   quadratic estimate
 \ga\label{640} \nonumber %
 Q_{\tilde\rho_1,\tilde\rho_1;r,s}((C^{-1}\circ F-I), (C  \widehat A )\circ F)\leq  K^3(2)K(r+2)Q_{r,s}^{**}(A,A).
 \end{gather}
 %where the last inequality is obtained from \re{4p10+} and \re{4p11}. Thus
% \ga
%|(C\hat A)\circ F|_{r,s}+| C^{-1}\circ F-I|_{r,s}\leq|A|_{r,s}|A|_{r_*,0}+ |A|_{r_*,s}|A|_{r,0}.
%%,\\
%%  Q^{**}_{r,s}(u,v):=|u|_{r_*,s}^3|v|_{r,s}+|v|_{r_*,s}^3|u|_{r,s}.
% \end{gather}
By $ \widehat A\circ F =(C  \widehat A )\circ F +(C^{-1}\circ F -I) (C  \widehat A )\circ F$   and
 the product rule \re{recallq}, we get
 \begin{align}\label{hacf}
 \| \widehat A \circ F\|_{\tilde\rho_1,\tilde\rho_1;r,s} \leq  K^3(2)K(r+2)Q_{r,s}^{**}(A,A).
 \end{align}
 Set $K_1(r)=K^3(2)K(r+2)$.
% Similarly, by \re{c-1r}-\re{c-1r3} and estimate \re{4p4} for $f$ we have
% \ga
%  Q_{\tilde\rho_1,\tilde\rho_1;r,s}((C^{-1}\circ F-I), f)  \leq K^3(2)K(r+2)Q_{r,s}^{**}(A,A),\\
%  Q_{\tilde\rho_1,\tilde\rho_1;r,s}(  (C  \hat A )\circ F,f) \leq K^3(2)K(r+2) Q_{r,s}^{**}(A,A).
% \end{gather}
% By the product rule, the last two inequalities give us a new quadratic estimate
%  \begin{align}\label{hacf2}
%   Q_{\tilde\rho_1,\tilde\rho_1;r,s}(\hat A\circ F,f)&=Q_{\tilde\rho_1,\tilde\rho_1;r,s}((C  \hat A )\circ F +(C^{-1}\circ F -I) (C  \hat A )\circ F,f)\\
%   &\nonumber\leq K^3(2) K(r+2)Q_{r,s}^{**}(A,A).
% \end{align}
%%In particular, we have
%% \ga
%% \|\hat A\circ F\|_{r_1,s}\leq |A|_{r_1,s}|A|_{r_0,s_*}, \\ \|\hat A\circ F\|_{r_0,s_*}\leq \|A\|^2_{r_0,s_*}\leq 1.
%% %,\quad
%%%  \|\hat A\circ F\|_{r_*,0}\leq |A|^2_{r_*,0}.
%% \end{gather}
By \re{hacf}, we have
\aln
\|\widehat A\circ F\|_{1, s_*} &\leq
K_1(r_0)Q^{**}_{r_0,s_*}(A,A)\leq 2 K_1(r_0)\|A\|^2_{r_0,s_*},\\
\|\widehat A\circ F\|_{s+1, s} &\leq
K_1(r_1)Q^{**}_{r_1,s}(A,A)\leq 2 K_1(r_1)\|A\|_{r_1,s_*}\|A\|_{r_1,s},\\
\|\widehat A\circ F\|_{r, s_*} &\leq
K_1(r)Q^{**}_{r,s_*}(A,A)\leq 2K_1(r_0)\|A\|_{r_0,s_*}\|A\|_{r,s_*}.
\end{align*}
By \re{4p4}, we have $\|f\|_{\tilde \rho_1,r,s}\leq K(r)\|A\|_{r,s}$.
Since $|f |_{\tilde\rho_1;1,0}\leq 1/C_N$ and $\|f\|_{\tilde\rho_1;0,s_*}\leq1$,  by \re{AuF-1}
we have a general estimate
\aln %\label{uf1f} \nonumber %
&\|u\circ F^{-1}\|_{\tilde\rho_2;r,s}\ple \left\{\|u\|_{\rho;r,s}+|u|_{\rho;1,s_*} \|f\|_{\rho;r,s}%+|u |_{\rho;1,s_*} \|f\|_{\rho;r,s-s_*}
\right.\\
  &\quad \left.+\|u\|_{s+1,s}\odot\|f\|_{r,s_*}+\|u\|_{r,s_*}\odot\|f\|_{s+1,s}+|u|_{\rho;1,0}\|f\|_{s+1,s}\odot\|f\|_{r,s_*} \right\}.
\nonumber
\end{align*}
Applying it to $u= A\circ F$,
we obtain   % Using \re{uf1f} for $\hat A=(A\circ F)\circ F^{-1}$,  we get
 \al  \label{hAAf}
 |\widehat A|_{{\tilde\rho}_2;r,s}
 &\ple  \| \widehat A \circ F\|_{\tilde\rho_1,\tilde\rho_1;r,s} +\|A\|_{  \rho;r_0,s_*}\|A\|_{ \tilde\rho_1;r,s}
 +2\|A\|_{r_1,s_*}\|A\|_{r_1,s}\|A\|_{r,s_*}\\  &\quad
 + \|A\|_{r_1,s}\|A\|_{r,s_*}\nonumber.
 \end{align}
%Let us estimate the two $Q'_{r,s}$.  Note that the above estimates for $Q_{\tilde\rho_1,\tilde\rho_1;r,s}(\hat A\circ F,f)+Q_{\tilde\rho_1,\tilde\rho_1;r,s}(f,f)$ are based on
% $
% Q_{r,s}(u,v)\ple|u|_{r,0}|v|_{0,s}+|u|_{0,s}|v|_{r,0}$. We also have $
% Q'_{r,s}(u,v)\ple|u|_{r,0}|v|_{0,s}+|u|_{0,s}|v|_{r,0}$. Analogous to \re{qada}-\re{qdfdf}, we have
% \eq{qpff}
% Q'_{\tilde\rho_1,\tilde\rho_1;r,s}(\hat A\circ F,f)+Q'_{\tilde\rho_1,\tilde\rho_1;r,s}(f,f)\leq  K(2)K(r+2)Q_{r,s}^{**}(A,A).
% \eeq
% Note that $\|\hat A\circ F\|_{ \tilde\rho_1;1,0}+|\hat A\circ F|_{\tilde\rho_1;0,s_*}\ple \|\hat A\circ F\|_{ \tilde\rho_1;r_0,s_*}\leq  K(2)K(r_0+2)Q^{**}_{r_0,s_*}(A,A)\leq K(2)K(r_0+2)\|A\|_{r_0,s_*}$.
 Using \re{hacf}, from \re{hAAf} it follows
 \ga
 \label{hAtQ}
 \|\widehat A\|_{ \tilde\rho_2;r,s} \leq  C_r  K_1(r)   \left(Q^{**}_{\rho;r,s}(A,A)+\|A\|_{r_1,s_*}\|A\|_{r_1,s}\|A\|_{r,s_*}\right).
 %\intertext{with}
%\tilde Q_{\rho;r,s}^{**}(A,A):=\|A\|_{ \rho;s+1,s}|A|_{ \rho;r,0}+\|A\|_{ \rho;r_0,s_*}\|A\|_{ \rho;r,s}.
%\label{tQss}
 \end{gather}

\medskip
\noindent
{\bf Case 1. $r>s+1$ and $r\not\in\nn$.}
We need to iterate the above construction and estimates to obtain a sequence of transformation $F_j$
so that $\tilde F_i^\la :=F^\la _i\circ\dots\circ F^\la _0$ converges to $\tilde F_\infty^\la $, while $S_{i+1}^\la :=(\tilde F_i)_*S_i^\la $, with $S_0^\la $ being the original structure, converges to the standard complex structure on $\cc^n$. More precisely, the coefficients $ A_{i}^\la $ of the adapted frame of $Z_{i}^\la $
tend to $0$ as $i\to\infty$.
We apply \rl{Ask2k} to nest the domains.
 Let us first defined a domain on which the sequence is well-defined. Set for $i=0,1,\dots$
\gan
\rho_i=\frac{1}{2}+\frac{1}{2(i+1)}, \quad
\rho_{i+1}=(1-\theta_i)^2\rho_i.
\end{gather*}
Note that $\rho_i$ decreases to $\rho_\infty=1/2$.
 We have
for $\rho_i/4<\rho<4\rho_i$, especially for $\rho_0/2<\rho<2\rho_0$
\eq{nestF}
F_i^\la\colon B^{2n}_{(1-\theta_i) \rho}\to B^{2n}_{\rho},\quad G^\la_i\colon B^{2n}_{(1-\theta_i)^2\rho}\to B^{2n}_{(1-\theta_i) \rho}
\eeq
provided \re{4p6} holds for $ A_i$, i.e.
\eq{4p6j-}
\| A_i\|_{{\rho_i},r_0,s_*}\leq  K_0^{-1}(r_0,\theta_i).
\eeq
By \rl{Ask2k} in which $1-\theta_k^*=(1-\theta_k)^2$, we have $\tilde F_i^\la\colon B_{\rho_\infty/2}\to B_{\rho_\infty}$.
Furthermore, in addition to \re{nestF}, we express  \re{hAtQ}, in which $\hat  A= A_{i+1}$ and $ A= A_i$, in a simple form
 \eq{Ai1i}
\|   A_{i+1}\|_{\rho_{i+1};r,s}\leq L_i(\|  A_i\|_{\rho_i;r,s}^2+\|  A_i\|_{\rho_i;r,s}^3), \quad L_i:=C_rK_1(r,\theta_i).
\eeq
Therefore, for the sequence $F_i$ to be defined, we need to achieve \re{4p6j-} for $i=0,1,2,\dots$.
Note that  $K_0(r_0,\theta_i)$ and $L_i$ have linear growth as $i\to\infty$. Thus
\eq{KLj}
K_0(r_0,\theta_i)+L_i\leq e^{P(i)}
\eeq
for some polynomial $P$ of positive coefficients. Let constant $C_\infty^*$ be defined in \rp{KAM} in which $\kappa=2$ and $\kappa_1=3$.  By \rd{def6.1} and \rl{defrapid}, there is $\hat e_0>0$ so that
$$
(C_\infty^*\hat e_0)^{\kappa^i}\leq e^{-P(i)}\leq K_0^{-1}(r_0,\theta_i).
$$
We may assume that $2C_\infty^*\hat e_0\leq 1$. By \rp{KAM} in which $\kappa=2$ and \re{Ai1i}-\re{KLj},     if
\eq{tau0}
\| A_0\|_{\rho_0;r,s}\leq  \hat e_0,
\eeq
then
\eq{tauj}
\| A_i\|_{\rho_i;r,s}\leq  (C_\infty^*\hat e_0)^{\kappa^i}, \quad i\geq1.
\eeq
In particular, \re{4p6j-} holds for every $i$. On the other hand, the initial condition \re{tau0} can be achieved by dilation and initial normalization in \rl{intialcod}.

We now consider the convergence of the sequence $F_i\circ\cdots\circ F_0$.
The mapping $F^\la_i$ transforms $S_i^\la $ into $S_{i+1}^\la $.
Thus the structure $S_i^\la $ is defined on $B^{2n}_{\rho_i}$.
%To nest the domains, we note that $ \rho_i$ decreases to $2\rho_\infty=1/2$
%and  $\prod(1-\theta_i)^2> \prod(1-2\theta_i)=1/2$.
%Define
%$$
%\tilde F_i^\la :=  F_{i}^\la \circ\cdots\circ F_0^\la .
%$$
Then $\tilde F_i^\la \colon B^{2n}_{\rho_\infty/2}\to B^{2n}_{\rho_\infty}$  transforms $S_0^\la $, restricted to $B^{2n}_{\rho_\infty/2}$, into $S_{i+1}^\la $.  We have
$$
\tilde F^\la_{i}-\tilde F^\la_{i-1}=f^\la_{i}\circ F^\la_{i-1}\circ\cdots\circ F^\la_0.
$$
By \rp{AuFF}, we have
\al\nonumber %\label{ftiF}
&\|\{\tilde F^\la_{i}-\tilde F^\la_{i-1} \}%\{f_i^\la \circ F^\la_{i-1}\circ\cdots\circ F^\la_0\}
\|_{ \rho_\infty/2;r+1,s}\leq C_r^{i+1}   \left\{\|f_i\|_{r+1,s}+ \|f_i\|_{1,0} \sum_{k\leq j<i} \|f_k\|_{s+1,s} \|f_j\|_{r+1,s_*} \right.\\
&\hspace{5em}+\left.
\sum_{j<i}|f_i|_{1,s_*} \|f_j\|_{r+1,s}
+\|f_i\|_{s+1,s} |f_j\|_{r+1,s_*}+\|f_i\|_{r+1,s_*} \|f_j\|_{s+1,s}\right\}\nonumber
\\
\nonumber
&\quad \leq C_r^{i+1}K_1(r)   \left\{\|A_i\|_{r,s}+ \|A_i\|_{r_0,s_*} \sum_{k\leq j<i} \|A_k\|_{r_1,s} \|A_j\|_{r,s_*} \right.\\
&\hspace{6em}+\left.
\sum_{j<i}\|A_i\|_{r_0,s_*} \|A_i\|_{r,s}
+\|A_i\|_{r_1,s} \|A_j\|_{r,s_*}+\|A_i\|_{r,s_*} \|A_j\|_{r_1,s}\right\}.
\nonumber
\end{align}
Here the last inequality is obtained by \re{4p4}.
We already know the rapid convergence of $\|\A_i\|_{\rho_i;r_0,s_*}$. They are bounded from above by a constant $C_*$. Then we simplify the above to obtain
\al\label{hderF}
\|\{\tilde F^\la_{i}-\tilde F^\la_{i-1}\}\|_{ \rho_\infty/2;r+1,s}
\leq \tilde C_r^{i+1}  K^3(2)K(r+2) \| A_i\|_{\rho_i;r,s}.
\end{align}
The rapid decay of $\| A_i\|_{r,s}$ via \re{tauj} implies the convergence of $\tilde F_i$ to $\tilde F_\infty$ in $\cL C^{r+1,s}( B^{2n}_{\rho_\infty/2})$. Finally,  for each $\la$, the limit of $\tilde F_i^\la $ is a $\cL C^{1}$ diffeomorphism defined in $B^{2n}_{\rho_\infty/2}$, because for the Jacobian matrix $\pd_x \tilde F^i$ of $ x\to\tilde F^i(x)$ with  $x\in B^{2n}_{\rho_\infty/2}$,  its operator norm   satisfies
\aln
 \|\pd_x \tilde F_{i}^\la -I\|&\leq\sum_{j=0}^i\|\{\tilde F^\la_{j}-\tilde F^\la_{j-1}\}\|_{ \f{\rho_{\infty}}{2};r_0+1,s_*}
\leq \sum_{j=0}^\infty\tilde C_{r_0}^{j+1}  K^3(2)K(r_0+2) \| A_j\|_{\rho_j;r_0,s_*}.
 %
% \sum_{j=0}^i  |f_{j}|_{\rho_j; 1,0}\prod_{ m =0}^{j-1}(1+|f_{ m }|_{ \rho_ m ;1,0})\\
% &\leq\sum_{j=0}^\infty \|f_{j}\|_{\rho_j; r_0,s_*}\prod_{ m =0}^{\infty}(1+\|f_{ m }\|_{ \rho_ m ;r_0,s_*})\leq1/2.
\end{align*}
By \re{tau0}-\re{tauj}, we obtain $\|\pd_x \tilde F_{i}^\la -I\|<1/2$, for a possibly smaller $\hat e_0$. This shows that the limit mapping $\tilde F_\infty$ is a diffeomorphism.

\medskip
\noindent
{\bf Case 2. $r=\infty$.}
We first  consider the case where $r=\infty$ and $s$ is finite. We first use \re{hAtQ} for $s= s_*$ and $r=r_0$ to get a rapid decay of $\|A_j\|_{r_0,s_*}$.  Next, we use  \re{hAtQ} in which $\hat  A= A_{i+1}$ and $ A= A_i$ a few times in a bootstrap argument. We first
simplify \re{hAtQ} with $s=s_*$ and $r_1=r_0=1+s+\{r\}$ as
\eq{sai1} \nonumber %
| A_{i+1}|_{\rho_{i+1};r',s_*} \leq C_{r'} K^3(2) K(r'+2) \| A_i\|_{\rho_{i};r_0,s_*}\| A_i\|_{\rho_{i};r',s_*}
\eeq
for any $r'>s+1$ with $\{r'\}=\{r\}$.
By \rl{defrapid} (ii), we get rapid decay of $\| A_i\|_{\rho_i;r',s_*}$.
We can also simplify  \re{hAtQ} as
\eq{Airs}
\|  A_{i+1}\|_{\rho_{i+1};r',s} \leq  C_rK^3(2)K(r'+2)  \| A_i\|_{\rho_{i};r',s_*}\| A_i\|_{\rho_{i};r',s}.
\eeq
By the rapid decay of $\|  A_i\|_{\rho_{i};r',s_*}$ and \rl{defrapid} ({\it ii}), we obtain the rapid convergence
of $\| A_i\|_{\rho_{i};r',s}$.
By \re{hderF} again, we obtain the rapid convergence of $\|\tilde F_{i+1}-\tilde F_i\|_{ \rho_\infty/2;r',s}$.
This shows that $\tilde F_\infty$ is in $\cL C^{\infty,s}$.

Next, we consider the case $s=\infty$. Applying the argument in Case $1$  to $s=1$, $r=5/2$, we obtain rapid convergence of $\|\tilde F_{i+1}-\tilde F_i\|_{ \rho_\infty/2;5/2,1}$.
Next, we use \re{Airs} with $r'=s+3/2$ for any positive integer $s$.
By \rl{defrapid} ({\it iii}), we obtain the rapid convergence of $\| A_i\|_{ \rho_i;s+3/2,s}$. By \re{hderF}, we obtain the rapid convergence of $\|\tilde F_{i+1}-\tilde F_i\|_{ \rho_\infty/2;s+3/2,s}$
for any positive integer $s$. This shows that $\tilde F_\infty$ is in $\cL C^{\infty,\infty}$.

\medskip
\noindent
{\bf Case 3. $3+s\leq r\in\nn$.}
We first apply estimates in Case 1 to $r=r_1=r_0=s+\f{5}{2}$. This gives us a rapid convergence of
$\|A_i\|_{\rho_i;s+\f{5}{2},s},  \|\tilde F_{i+1}-\tilde F_i\|_{\rho_\infty/2;s+\f{7}{2},s}$.
Fix $r-1/2<r'<r$. We want to show that
$ \|A_i\|_{\rho_i;r',s},  \|\tilde F_{i+1}-\tilde F_i\|_{\rho_\infty;r'+1,s}$
converge rapidly.  By \re{hAtQ} we have
$$
| A_{i+1}|_{\rho_{i+1};r',s} \leq C_{r'} K^3(2) K(r'+2) \| A_i\|_{\rho_{i};r_1,s}\| A_i\|_{\rho_{i};r',s}.
$$
Here $r_1=s+1+\{r'\}<s+2$. Since $\| A_i\|_{\rho_{i};s+2,s}$ converges rapidly,  then $\| A_{i+1}\|_{\rho_{i+1};r',s}$ also converges rapidly.
We verify that  $ \|\tilde F_{i+1}-\tilde F_i\|_{\rho_\infty/2;r'+1,s}$ converges rapidly, by \re{hderF}.  Therefore, $\tilde F_\infty\in \cL C^{r'+1,s}$ for any $r'<r$.
The proof of \rp{pnl} is complete.
\end{proof}

\begin{rem}\label{1drem}
Strictly speaking, the above proof assume that $n\geq2$. When $n=1$, the integrability condition \re{2p3} is vacuous. We can replace the
homotopy formula by the Cauchy-Green operator
$$
a(z)=\f{-1}{\pi}\f{\pd}{\pd\ov z}\int_{|\zeta|<\rho}\f{a(\zeta)}{\zeta-z}\, d\xi d\eta,\quad \zeta=\xi+i\eta.
$$
  Thus the last term $P_2\langle A,\pd A\rangle$ in \re{2p12}, which is from  the integrability condition, is removed.
Therefore, the proof with possible simplifications is still valid. In one-dimensional case, one can also employ the Picard iteration as in Bers~\ci{Be57}, Chern~\ci{Ch55}, and Bertrand-Gong-Rosay~\ci{BGR14} (for a parametric  version), except possibly  for Case~3 where both $r,s$ are integers. Of course, for the non-parametric case, it is a simple fact that any $C^1$ diffeomorphism that transforms a complex structure of class $C^r$ into another one of the same class belongs to $\cL C^{r_-}$ when $r=2,3,\dots$.  When $\cL C^{r,s}$ substitutes $C^r$, the latter is however not valid for the parametric complex structures.    \end{rem}

We now prove the following by using an argument in   Nirenberg~\ci{Ni57} and \rp{pnl}.

%\pr{nithm}
%Let $\{S^\la \}\in \cL  C^{r,s}(D)$ be a family of complex Frobenius structures defined by \rta{thm1}.
%There exists a family of $\{F^\la \}\in C^{r',s'}(U)$ transforming the structures in the complex Frobenius structure on $\cc^{n}\times\rr^m\times\rr^L$, where $r',s'$ satisfy
%(i)
% $r'=r$ when $L=0$ and $r$ is non-integer.
%(ii) $r'=r-1\geq s$ and $s'=s$, when $L>0$.
% \epr
 \pr{nithm} Let $s\in\nn$, $r\in (s+1,\infty]\setminus\nn$.
Let $\{S^\la \}\in \cL  C^{r,s}(D)$ be a family of complex Frobenius structures defined by \rta{thm1}.
There exists a family of $\{F^\la \}\in \cL C^{1,0}(U)$ transforming the structures into  the standard complex Frobenius structure in $\cc^{n}\times\rr^M\times\rr^L$. Furthermore, we have
\bppp
\item
$\{F^\la \}\in \cL C^{r,s}(U)$ when $L=0$.
\item  $\{F^\la \}\in \cL C^{[r]-1,s}(U)$ when $L>0$ and $s=[r]-1$.\eppp
 \epr
 \begin{proof}We will use the summation convention described before \rl{intialcod}.
 We use an adapted frame
 \aln
X^\la _{m}&=\pd'_{m}+ \RE (B^\la _{m\beta} \pd_\beta)+b^\la _{m\ell}\pd_{ \ell}'',  \qquad 1\leq m\leq M,\\ Z^\la _{\ov\all}&=\pd_{\ov\all}+ A^\la _{\ov\all \beta}\pd_\beta+a^\la _{\ov\all  \ell }\pd_{\ell }'', \ \ \  \qquad
  1\leq \all\leq n,
 \end{align*}
 with $X^\la _ m =\pd'_m$ and $Z^\la _{\ov\all}=\pd_{\ov\all}$ at the origin.

$(i)$
Suppose that $L=0$. Then the integrability condition on $S^\la $  implies that
 $$
 [X_m^\la ,X_{m'}^\la ]=0, \quad [Z_{\ov\all}^\la ,Z_{\ov\beta}^\la ]=0
 $$
(see~\re{6p11} below), while the Levi-flatness condition~\re{LLf} is vacuous.
 Restricted to $t=0$,   $\{Z^\la _{\ov\all}\}$ defines a family of complex structures. By \rp{pnl}, we can find   $\{F_0^\la \}\in\cL C^{r+1,s}(U')$ transforming
 the restricted structures into the standard complex structure in $\cc^n$. Then $\{X_m^\la \}$ is still in $\cL C^{r,s}$.  Applying the (real) Frobenius theorem (\rp{imf}), we find a unique family $\{F^\la \}\in\cL C_{t,z;\la}^{r+1,r;s}(U)$ such that $F^\la =F_0^\la $ for $t=0$ and $F^\la $ transform $\{X_1^\la,\dots,X_m^\la \}$ into the standard structure in $\cc^n\times\rr^M$.   As in \ci{Ni57}, let us  show that $\{Z^\la \}$ is already the standard complex structure in $\cc^n$. Indeed, we know that
 $$
 A^\la _{\ov\all\beta}=0, \quad \text{when $t=0$}.
 $$
 Now $[Z_{\ov\all},X_m]'=0$ in \re{6p11} implies that $\pd_{t}A^\la_{\ov\all\beta}(z,t)=0$. Hence $A_{\ov\all\beta}=0$.

\medskip
 $(ii)$ Suppose that $L>0$.  We reduce it to the previous case by using the Frobenius theorem.
 Recall that $S^\la$ are Levi-flat.  Applying \rp{imf}, we find $F_0^\la (z,t,\xi)$ with $F_0^\la(0,0,\xi)=\xi$ so that
 $X^\la_jF_0^\la=\ov Z^\la_{\ov\alpha}F^\la= Z^\la_{\ov\all}F_0^\la=0$, while $\{F_0^\la \}\in C_{(z,t),\xi;\la}^{r+1,r;s}$. Let $\tilde F_0^\la$ be defined by $(\hat z,\hat t,\hat\xi)=(z,t,F_0^\la(z,t,\xi))$. Then $(z,t,\xi)=(\hat z,\hat t,\hat F^\la_0(\hat z,\hat t,\hat\xi))$ with  $\{\hat F_0^\la\}\in \cL C^{r,s}$.
 Now $\{(\tilde F_0^\la) _*Z_{\ov\all}^\la ,(\tilde F_0^\la)_*X_m^\la \}$,  denoted by $\{\widehat Z_{\ov\all}^\la ,\widehat X_m^\la \}$, has the form
 \aln
\widehat X^\la _{m}&=\widehat\pd'_{m}+ \RE (B^\la _{m\beta}(\hat z,\hat t,\hat F_0^\la(\hat z,\hat t,\hat\xi))) \widehat\pd_\beta),  \qquad 1\leq m\leq M,\\
\widehat  Z^\la _{\ov\all}&=\hat\pd_{\ov\all}+ A^\la _{\ov\all \beta}(\hat z,\hat t,\hat F_0^\la(\hat z,\hat t,\hat\xi))\widehat \pd_\beta, \ \ \  \qquad
  1\leq \all\leq n.
 \end{align*}
 We drop all hats.   Hence, $\{Z_{\ov\all}^\la ,X_m^\la \}$ is still in $\cL C^{r,s}$, but its coefficients depend on $\xi$.
 Since $[r]-1=s$,
 we  treat $\xi,\la$ as parameters, restrict $Z_{\ov\all}^\la$ to $t=0$ and find a diffeomorphism $\tilde F_1^\la\colon (z,\xi)\to(F_1^\la (z,\xi),\xi)$ such that for  fixed $(\xi,\la)$
 the mapping transforms $\{Z_{\ov\all}^\la\}$ into the standard complex structure in $\cc^n$.  Here $\{\tilde F_1^\la\}\in \cL C^{r+1,[r]-1;s}_{(z,t),\xi;\la}$.
 It also transforms $X_m^\la$ to new vector fields,  denoted by $\widehat X_m^\la$. We have
 $$
 \widehat X_m=X_mt_{m'}\pd_{t_{m'}}+X_m\ov {(F^\la_1)_\all}\pd_{\ov\all}+X_m( F^\la_1)_\all\pd_\all.
 $$
Since $X_m$ does not involve $\pd_{\xi_\ell}$, then
 $
 \{\widehat X_m^\la\}\in \cL C^{r,[r]-1;s}_{(z,t),\xi;\la}.
 $
 Again, we apply the real Frobenius theorem to $\{X_m^\la\}$, treating $\xi,\la$
 as parameters,  to find
 $$\tilde F_2^\la\colon (z,t,\xi)\to(F_2^\la (z,t,\xi),t,\xi), \quad F_2^\la(z,0,\xi)=F_1^\la(z,\xi)$$
with $\{F_2^\la\}\in \cL C^{r+1,r,[r]-1;s}_{t,z,\xi;\la}$ transforming $\{\hat X_1,\dots, \hat X_M\}$ into $\{\pd_1',\dots, \pd_M'\}$. We apply the argument from (i) to conclude that $\{\tilde F_2^\la\}$ normalizes the   structures.
 \end{proof}
Roughly speaking \rp{nithm} ({\it ii}) gives us a desired regularity result with   loss of one derivative. One of main results of this paper is to deal with the loss of derivatives. We will also study the case when $s$ is non integer.

All results in this paper indicate that for a family of Levi-flat CR structures, it is important to seek solutions that are less regularity in parameter $\la$. This is clearly showed by the examples in section~\ref{sec4}. There is however  an exception, namely, for a holomorphic family of complex structures, as shown by Nirenberg~\ci{Ni57} for the $\cL C^\infty$ case.
\begin{defn}Let $r\in(0,\infty]$. Let $D$ $($resp. $P,Q)$ be a  domain in $\cc^n$ $($resp. $\cc^m,\rr^k)$. We say that $\{u^\la\colon\la\in P\}$ is a holomorphic family
of $\cL C^r$ $($resp. $\cL C^{r_-})$ functions $u^\la$ in $D$ if $(x,\la)\to u^\la(x)$ is continuous in $D\times P$,    $u^\la\in \cL C^r(D)$ $($resp. $\cL C^{r_-}(D))$
for each   $\la$, and $u^\la(x)$ is holomorphic
in $\la\in P$ for each   $x\in D$. We say that $\{v^{\la,\mu}\colon\la\in P,\mu\in Q\}$ is a family of $\cL C^r$ functions in $D$ that depend holomorphically in $\la$ and analytically in $\mu$, if there are a domain $\tilde Q$ in $\cc^k$ with $\tilde Q\cap\rr^k=Q$ and
a holomorphic family $\{\tilde v^{\la,\mu}\colon\la\in P,\mu\in\tilde Q\}$ of $\cL C^r$ $($resp. $\cL C^{r_-})$ functions $\tilde v^{\la,\mu}$ in $D$ such that $\tilde v^{\la,\mu}=v^{\la,\mu}$ for $\mu\in Q$.
\end{defn}
For the finite smoothness case, we have the following analogue of Nirenberg's theorem.
\begin{cor}\label{}   Let $D$ $($resp. $P,Q)$ be a  domain in $\cc^n$ $($resp. $\cc^m,\rr^k)$.   Let $S^{\la,\mu} $ be a    family of complex structures in $D$ defined by
$$
Z_{\ov\all}^{\la,\mu} =\sum_{\beta=1}^na_{\ov\all\ov \beta}^{\la,\mu}\DD{}{\ov z_\beta}+b_{\ov\all\beta}^{\la,\mu} \DD{}{z_\beta}, \quad 1\leq \all\leq n
$$
where $\{a_{\ov\all\ov \beta}^{\la,\mu}\} ,\{b_{\ov\all\beta}^{\la,\mu} \}$ are families of $\cL C^r$ functions in $D$ that depend holomorphic in $\la\in P$ and analytic in $\mu\in Q$. Then for each $(x_0,\la_0,\mu_0)\in D\times P\times Q$ there is a family of $\cL C^{r+1}$ $($resp. $\cL C^{(r+1)_-})$ diffeomorphisms $F^{\la,\mu} $ in $  D_1$
that depend holomorphically in $\la\in P_1$ and analytically in $\mu\in  Q_1$ such that $F^{\la,\mu} $ transform $\{Z_{\ov 1}^{\la,\mu},\dots,Z^{\la,\mu}_{\ov n} \}$ into the standard complex structure in $\cc^n$, provided $r$ is in $(1,\infty]\setminus\nn$ $($resp. $\{2,3,\dots\})$. Here $  D_1\times  P_1\times  Q_1$ is a neighborhood of $(x_0,\la_0,\mu_0)$.
\end{cor}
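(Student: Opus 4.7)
The plan is to reduce the corollary to the non-parametric Newlander-Nirenberg theorem by lifting the family to a single integrable complex structure on a higher-dimensional complex manifold and then reading off the desired diffeomorphism from its holomorphic coordinates. First I would use the assumed analyticity in $\mu$ to extend each coefficient $a^{\la,\mu}_{\ov\all\ov\beta}$ and $b^{\la,\mu}_{\ov\all\beta}$ holomorphically to a complex neighborhood $\tilde Q\subset\cc^k$ of $Q$, set $\tilde D:=D\times P\times\tilde Q\subset\cc^{n+m+k}$, and on $\tilde D$ consider the rank $(n+m+k)$ subbundle $\tilde S$ of $T\tilde D\otimes\cc$ spanned by
$$
\tilde Z_{\ov\all}:=Z^{\la,\mu}_{\ov\all}\quad(1\le\all\le n),\qquad \pd_{\ov\la_j}\quad(1\le j\le m),\qquad \pd_{\ov\mu_i}\quad(1\le i\le k).
$$
Since each $S^{\la,\mu}$ is a complex structure in $z$, the identity $\tilde S\cap\ov{\tilde S}=\{0\}$ holds, so $\tilde S$ is an almost complex structure on $\tilde D$. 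Integrability reduces to three cases: $[Z^{\la,\mu}_{\ov\all},Z^{\la,\mu}_{\ov\beta}]\in\tilde S$ by hypothesis; brackets among the $\pd_{\ov\la_j},\pd_{\ov\mu_i}$ vanish trivially; and the mixed brackets $[Z^{\la,\mu}_{\ov\all},\pd_{\ov\la_j}]=-\pd_{\ov\la_j}a^{\la,\mu}_{\ov\all\ov\beta}\pd_{\ov z_\beta}-\pd_{\ov\la_j}b^{\la,\mu}_{\ov\all\beta}\pd_{z_\beta}$ and their $\mu$-analogues vanish because $a,b$ are holomorphic in $(\la,\mu)$.

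Next I would verify that the coefficients of $\tilde S$ are jointly of class $\cL C^r$ on compact subsets of $\tilde D$. This follows from the Cauchy integral formula
$$
a^{\la,\mu}(z)=\frac{1}{(2\pi i)^{m+k}}\int_{T^{m+k}}\frac{a^{w,\eta}(z)}{\prod_j(w_j-\la_j)\prod_i(\eta_i-\mu_i)}\,dw\,d\eta,
$$
together with differentiation under the integral in $z$: this shows that every $(\la,\mu)$-derivative of $a^{\la,\mu},b^{\la,\mu}$ is of class $\cL C^r$ in $z$ with locally uniform bounds, which yields joint $\cL C^r$ regularity. Applying Webster's form of the Newlander-Nirenberg theorem~\ci{We89} to the formally integrable $\cL C^r$ complex structure $\tilde S$ on $\tilde D$ produces, in a neighborhood of $(x_0,\la_0,\mu_0)$, $\cL C^{r+1}$ (for $r\in(1,\infty]\setminus\nn$) or $\cL C^{(r+1)_-}$ (for $r\in\{2,3,\dots\}$, by applying the sharp result with exponent $r-\e$ and intersecting) $\tilde S$-holomorphic coordinates.

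Since the functions $\la_1,\dots,\la_m,\mu_1,\dots,\mu_k$ are themselves $\tilde S$-holomorphic and have $\cc$-linearly independent differentials, I would invoke \rp{solmap} (with no external parameter) to produce $n$ additional solutions $w_1,\dots,w_n$ of $\tilde Z_{\ov A}w_\all=0$ in the prescribed regularity class, with the $z$-Jacobian of $(w_1,\dots,w_n)$ invertible at $(x_0,\la_0,\mu_0)$, so that the resulting chart on $\tilde D_1:=D_1\times P_1\times\tilde Q_1$ has the form $(z,\la,\mu)\mapsto(w(z,\la,\mu),\la,\mu)$. The key observation is then that $\pd_{\ov\la_j}w_\all=0$ and $\pd_{\ov\mu_i}w_\all=0$ throughout $\tilde D_1$, so each $w_\all$ is holomorphic in $(\la,\mu)$.

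Setting $F^{\la,\mu}(z):=(w_1(z,\la,\mu),\dots,w_n(z,\la,\mu))$ gives a family that is $\cL C^{r+1}$ (resp.\ $\cL C^{(r+1)_-}$) in $z$, holomorphic in $\la\in P_1$, and analytic in $\mu\in Q_1$ (the real analyticity witnessed by the holomorphic extension to $\tilde Q_1$). Each $F^{\la,\mu}$ is a diffeomorphism by the inverse function theorem; restricting the equations $\tilde Z_{\ov A}w_\all=0$ to the slice $\{(\la,\mu)\}\times D_1$ yields $Z^{\la,\mu}_{\ov\all}w_\all=0$, so $F^{\la,\mu}_*S^{\la,\mu}$ is the standard complex structure on $\cc^n$. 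I expect the main technical hurdle to be the joint $\cL C^r$ regularity of the extended coefficients, since that is the one step requiring a genuine argument beyond formal manipulations; once it is in hand, no Nash-Moser or rapid-iteration machinery is needed, because the lifted structure $\tilde S$ carries no parameters at all and a single application of the classical Newlander-Nirenberg theorem suffices.
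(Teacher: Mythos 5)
Your proposal is correct and follows the paper's argument closely: both lift the family to a single $\cL C^r$ integrable complex structure on a neighborhood in $\cc^{n+m+k}$ by adjoining $\pd_{\ov\la_j},\pd_{\ov\mu_i}$, verify joint $\cL C^r$ regularity in $(z,\la,\mu)$ via the Cauchy integral formula, apply Webster's Newlander--Nirenberg theorem, and extract $F^{\la,\mu}$ from $n$ of the resulting holomorphic coordinates, with holomorphicity in $(\la,\mu)$ automatic from $\pd_{\ov\la}w_\all=\pd_{\ov\mu}w_\all=0$. The only deviations are cosmetic: where you cite \rp{solmap} the paper simply takes a linear combination of the $n+m+k$ coordinate functions so that the restriction to a $z$-slice is a diffeomorphism (note that \rp{solmap} produces a diffeomorphism from given solutions rather than producing new solutions, so you should be reading off the $w_\all$ directly from Webster's chart), and for integer $r$ the paper cites \rp{pnl} instead of the intersection-over-$\e$ elliptic regularity argument you sketch, though both work.
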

\begin{proof}By the definition, we extend $a^{\la,\mu},b^{\la,\mu}$ to a family
of $\cL C^r$ functions in $D_0$  that depend holomorphically in $(\la,\mu)\in P_0\times Q_0$, where $D_0\times P_0\times Q_0$ is a neighborhood of $(x_0,\la_0,\mu_0)$ in $\cc^n\times\cc^m\times\cc^k$. The extension of the coefficients allows us to extend $Z_{\ov 1}^{\la,\mu},
\dots, Z_{\ov n}^{\la,\mu}$ to a complex structure defined in $D_0\times P_0\times Q_0$, when we include the vector fields
$$
\DD{}{\ov\la_i}, \quad \DD{}{\ov\mu_j},\quad 1\leq i\leq m,\quad 1\leq j\leq k.
$$
%Choosing an affine coordinates for $\rr^{2n}$, we may assume that $x_0=0$ and
%$(a_{\ov\all\ov\beta}^{\la_0,\mu_0}(0))$ is the identity matrix and  $b_{\ov\all\ov\beta}^{\la_0,\mu_0}(0)=0$. Using a linear combination, we may further assume that $(a_{\ov\all\ov\beta}^{\la,\mu})$ is the identity matrix when $(\la,\mu,x)$ is close to $(0,\la_0,\mu_0)$.
By Cauchy's formula, we verify that $(z,\la,\mu)\to (a^{\la,\mu}(x),b^{\la,\mu}(x))$
is $\cL C^r$ in $(z,\la,\mu)$. Assume first that $r>1$ is not an integer. By Webster's result on Newlander-Nirenberg theorem, we can find a $\cL C^{r+1}$ diffeomorphism $(z,\la,\mu)\to (f_1,\dots, f_{n+m+k})$ such that $Z_{\ov\all}^{\la,\mu}f_\ell=0$ while $f_j$ are holomorphic in $\la,\mu$. Near $(z_0,\la_0,\mu_0)$, by a linear combination we may assume that $z\to (f_1(z,\la_0,\mu_0),\dots, f_{n}(z,\la_0,\mu_0))$ is a diffeomorphism near $z_0$. Let $F_\all^{\la,\mu}=f_\all^{\la,\mu}$ for $1\leq\all\leq n$. Then $F^{\la,\mu}$ has the desired property. When $r=2,3,\dots$, we use \rp{pnl} instead of Webster's result and repeat the above argument. The proof is complete.
\end{proof}

\setcounter{thm}{0}\setcounter{equation}{0}
\section{Approximate solutions via two homotopy formulae
}\label{sec6}

In this section, we prove the results for the general case.
We will first set up a procedure for our problem to apply a homotopy formula
for the normalization of the complex Frobenius stricture. We will need two
known homotopy formulae. One of them  due to  Tr{e}ves~\ci{Tr92} is
for the complex differential $\cL D$ associated the flat CR structure in $\cc^n\times \rr^m$.
Another is the Poincare lemma for $d^0+\db+\pd$.

We will finish the section by illustrating how the homotopy formula
is used more effectively. See also Gong-Webster~\cite{GW12}  for the case of CR embedding,
which improves Webster's original application of the homotopy formula for $\db_b$.

Recall that on $\cc^n$,  we set  $\pd_{\ov\all}=\pd_{\ov z_\all}$ and $\pd_\all=\pd_{ z_\all}$. On $\rr^m$,
set $\pd'_m=\pd_{t_m}$. On $\rr^L$,  set $\pd_\ell''=\pd_{\xi_\ell}$. Let $N=2n+M+L$.
We will use the summation convention described before \rl{intialcod}.
To normalize $S$ near a point $p$, we may assume that $p=0\in D\subset \rr^N$ and require that
all changes of   coordinates   fix the origin.
By \rl{intialcod} we first choose   linear coordinates of $\rr^N$  so that near $0$,  $S$ is spanned by the adapted frame
\al\nonumber %
X_{m}&=\pd'_{m}+2\RE (B_{m\beta} \pd_\beta)+b_{m  \ell }\pd_{\ell}'',  \quad 1\leq m\leq M,
\\ Z_{\ov\all}&=\pd_{\ov\all}+ A_{\ov\all \beta}\pd_\beta+a_{\ov\all \ell}\pd_{\ell }'', \ \  \qquad
  1\leq \all\leq n.\nonumber %
\end{align}
Here $A_{\ov\all \beta}, B_{m\ell}, a_{\ov\all\ell}$ are complex-valued functions and $b_{m\ell}$
are real-valued. They all vanish at the origin.  Define $ \A=(A,B,a,b)$ and     assume that
\eq{}\nonumber %
|(A,B,a,b)|<1/{C_0}.
\eeq
Then $Z_{\ov\all}, Z_\all:=\ov{Z_{\ov\all}}$, and $ X_m$ are pointwise $\cc$-linearly independent.    Set
$$
\ov A_{\ov \all \beta}:=\ov{A_{\ov\all\beta}}, \quad \ov B_{m\ov\beta}:=\ov{B_{m\ov \beta}}, \quad \ov a_{\all\ell}:= \ov{a_{ \all\ell}}.
$$
The Lie brackets of the adapted frame  are
\aln
[Z_{\ov\all},Z_{\ov\beta}]&=
(Z_{\ov\all}A_{\ov\beta \gaa}-Z_{\ov\beta}A_{\ov\all \gaa})\pd_\gaa+(Z_{\ov\all}
a_{\ov\beta \ell}-Z_{\ov\beta}a_{\ov\all \ell})\pd_\ell'',\\
[X_m,X_{m'}]&=
2\RE\{(X_{m}B_{{m'} \gaa}-X_{m'}B_{m \gaa})\pd_\gaa\}+(X_{m}
b_{{m'}\ell}-X_{m'}b_{m\ell})\pd_\ell'',\\
[Z_{\ov\all},X_{m}]&=[Z_{\ov\all},X_{m}]'\mod Z_{\ov\gaa},\\
\intertext{with}
[Z_{\ov\all},X_{m}]'
&:=(Z_{\ov\all}B_{m \gaa}-X_{m}A_{\ov\all \gaa}-A_{\ov\beta \gaa}Z_{\ov\all}\ov B_{m \beta})\pd_\gaa
 \\ &\quad +(Z_{\ov\all}
b_{m \ell}-X_{m}a_{\ov\all \ell}-a_{\ov\beta \ell}Z_{\ov\all} \ov B_{m\beta})\pd_\ell''.
\end{align*}
To find equations for $S+\ov S$ to be Levi-flat, we compute
Levi-forms
\gan
[Z_{\all},Z_{\ov\beta}]=
Z_{\all}A_{\ov\beta \gaa}\pd_\gaa-Z_{\ov\beta} \ov A_{ \ov\all\gaa}\pd_{\ov\gaa}+(Z_{\all}
a_{\ov\beta \ell}-Z_{\ov\beta} \ov  a_{ \ov\all \ell})\pd_\ell''.
\end{gather*}
Define the matrix
\ga\nonumber
R:= (I-A\ov A)^{-1}-I, \quad
  R=(R_{\ov\all\ov\beta}), \quad \ov R=(\ov R_{\ov\all\ov \beta}), \quad U:=I+R.
\end{gather}
Let us substitute $\pd_\all$ and $\pd_{\ov\all}$ by using
\aln
\pd_{\ov\all}&=Z_{\ov\all}- A_{\ov\all \beta}Z_\beta-a_{\ov\all \ell}\pd_{\ell}''  +R_{\ov\all \ov\beta} Z_{\ov\beta}-
R_{\ov\all \ov\beta} A_{\ov\beta \gamma} Z_{\gaa}
\\ &\quad -( R_{\ov\all \ov\beta}a_{\ov \beta \ell}-U_{\ov\all\ov\beta}A_{\ov\beta\mu}\ov a_{\ov\mu\ell})\pd_\ell''.
\end{align*}
Write $[X_{\all},X_{\ov\beta}]=
L_{\ell;\all\ov\beta}\pd_\ell''
\mod(Z_{\ov\gaa}, {Z_{ \mu}})$ with Levi forms $(L_{\ell;\all\ov\beta})_{1\leq\all,\beta\leq n}$ being defined by
\al
\label{LLf}
L_{\ell; \all\ov\beta} &:=Z_{\all}
a_{ \ov\beta \ell}-Z_{\ov\beta}{\ov a_{\ov\all \ell}}
+Z_{\all}{ A_{\ov\beta \gaa}}\ov U_{\ov\gaa\ov\del}(\ov A_{\ov\del\nu}a_{\ov\nu\ell}-\ov a_{\ov\del\ell})
\\
&\quad\  -Z_{\ov\beta}\ov A_{\ov\alpha  \gaa}   U_{\ov\gaa\ov\del}(  A_{\ov\del\nu}\ov a_{\ov\nu\ell}-  a_{\ov\del\ell}), \quad 1\leq\all,\beta\leq n.\nonumber
\end{align}
Then the integrability conditions are equivalent to
\eq{6p11}
[Z_{\ov\all},Z_{\ov\beta}]=0,\quad
[X_m,X_{m'}]=0, \quad [Z_{\ov\all},X_m]'=0, \quad  L_{\ell;\all\ov\beta}=0.
\eeq
%Applying real Frobenius to $\{X_i\}$, we may assume that $X_i=\pd_i$. Then $A_{\ov\all}^\beta,B_{\ov\all}^ m $ are independent of $x_i$. We may assume that $m=0$, and we are left
%only with the case with integrability and Levi-flat conditions
%\eq{}
%[Z_{\ov\all},Z_{\ov\beta}]=0, \quad \cL L_{ m ;\all\ov\beta} =0.
%\eeq

We now introduce  some differential forms by using the adapted frame.
On $\cc^n$ and $\rr^m$ we introduce the following $(0,1)$ and $1$ forms
\ga
A_\beta=A_{\ov\all \beta} d\ov z_\all, \quad a_\ell =a_{\ov\all \ell} d\ov z_\all,\nonumber \\
B_\beta=B_{m\beta }dt_m,\quad b_\ell=b_{m\ell} dt_m, \nonumber\\
 {\mathcal B}_\beta=A_\beta+B_\beta,\quad  {\mathfrak b}_\ell =a_\ell+\ov a_\ell+b_\ell. \label{a73}%,\\
%\cL A=({\cL C}_1,\dots, {\cL C}_n,{\mathfrak c}_1,\dots, {\mathfrak c}_K).
 \end{gather}
For a function $f$ in $\rr^N$,
we denote by $\ov\pd f=\pd_{\ov\all} fd\ov z_\all$ and $d_tf=\pd'_mfdt_m$.
%Extend $\ov\pd$ and $d_t$
%as exterior differentials on differential forms on $\cc^n$ and $\rr^M$, respectively.
%Let $\om_1,\dots, \om_k$ be differential forms or functions.  We introduce
%$$
%[\om_1\cdots\om_k]=\sum c_{I_1\dots I_k}\om_{1,I_1}\dots\om_{k,I_k},
%$$
%where $\om_{ m ,I_ m }$ are coefficients of $\om_ m $ and $c_{I_1,\dots, I_k}$ are universal constant coefficients.
Recall that $\pd_{z,\xi} u$ stands for first-order derivatives of $u$ in $\cc^n\times\rr^L$.

The conditions \re{6p11}
are equivalent to
\ga\label{6p14}
\db A=\langle(A,a), \pd_{z,\xi}  A\rangle, \quad \db a=\ov{\pd\ov a}=\langle(A,a), \pd_{z,\xi} a\rangle, \\
\label{6p15}d_tB=\langle(B,b), \pd_{z,\xi} B\rangle, \quad d_tb=\langle(B,b),  \pd_{z,\xi} b\rangle, \\
\label{6p16}\db B+d_tA=\langle(A,a), \pd_{z,\xi} B\rangle+\langle(B,b),  \pd_{z,\xi} A\rangle+\langle A,\pd_{z,\xi} B+\langle(A,a),\pd_{z,\xi} B\rangle,\\
\label{6p17}\db b+d_ta=\langle(A,a), \pd_{z,\xi} b\rangle+\langle(B,b), \pd_{z,\xi} a\rangle+\langle a, \pd_{z,\xi} B\rangle+\langle a(A,a), \pd_{z,\xi} B\rangle,\\
\label{6p18}\pd a+\ov{\pd a}=2\RE\left\{ \langle A, \pd_{z,\xi} a\rangle+\langle P(R(A),A,a)(\ov A,\ov a), \pd_{z,\xi} A\rangle\right\}.
\end{gather}
 Recall that $d=d_t+\pd_z+\db_z$ and $\cL D=d_t+\db_z$. Define
 \eq{AdA}
  \A:=( {\B},\mathfrak b),\quad \mathbf d\A:=(\cL D \B,d\mathfrak b).
 \eeq
Thus we can rewrite the above conditions  as
\ga \label{6819}
%d'\B=\Phi(\cL B,\mathfrak b; D\cL B,D\frak b),\quad
%d\mathfrak b=\phi(\cL B,\frak b; D\cL B,D\mathfrak b),\\
{\mathbf d}\A=\langle\psi(\A)\A,\pd_{z,\xi}\A\rangle.
\end{gather}
Here
%we denote by $\psi(\A)\A, \A)$ a function of the form
%\eq{QADA}
%\psi(\A;\A)=\psi_0(\A)\cdot \A+\psi_1 (\A)\cdot \ov{\A}, \quad \psi_i(0)=0.
%\eeq
%with
$\psi(\A)$ are power series  in the coefficients  of $\A,\ov{\A}$, which are convergent for $|\A|<1/C$ for some constant $C>1$.

 We consider a transformation   \eq{}H\colon \hat
z=z+f(z,t,\xi),\quad \hat t=t,\quad \hat \xi=\xi+g(z,t,\xi)\nonumber
\eeq
with
$f(0)=0$, $g(0)=0$,   $|\pd f(0)|<1$ and $|\pd g(0)|<1$. Define $\widehat\pd_\all=\pd_{\hat z_\all},\widehat\pd'_m=\pd_{\hat t_m}$, and $\widehat\pd''_ \ell =\pd_{\hat \xi_ \ell }$, etc.  Then
 \aln
H_*Z_{\ov\all}&=
(\del_{\ov\all \ov\beta}+(A_{\ov\all \gaa}\pd_\gaa  +
a_{\ov\all \ell}\pd_{\ell}'') \ov f_\beta)
\widehat\pd_{\ov{\beta}}+
(\pd_{\ov\all}f_\beta+
A_{\ov\all \beta}+(A_{\ov\all \gaa}\pd_\gaa  +
a_{\ov\all \ell}\pd_{\ell}'')f_\beta)\widehat \pd_{\beta}\\
&\quad +
(\pd_{\ov\all}g_\ell+a_{\ov\all \ell}+(A_{\ov\all \gaa}\pd_\gaa  +
a_{\ov\all \ell'}\pd_{\ell'}'')g_\ell)\hat \pd_{\ell}'', \\
H_*X_{m}&=\hat\pd'_{m}+
2\RE((\pd'_mf_\beta+B_{m \beta}+(2\RE (B_{m\gamma} \pd_\gamma)+b_{m \ell}\pd_{\ell}'')
f_\beta)\hat\pd_{\beta})\\ &\quad +
(\pd'_mg_\ell+b_{m{m'}}+(2\RE (B_{m \gamma} \pd_\gamma)+b_{m {\ell'}}\pd_{\ell'}'')g_\ell)\hat \pd_{\ell}''.
\end{align*}
Assume that
$$
f(0)=0, \quad g(0)=0, \quad |\pd f|+|\pd g|<1/{C_0}.
$$
Then the new
adapted frame for $H_*S$  is
$
\widehat Z_{\ov\all}=\widehat\pd_{\ov\all}+ \widehat A_{\ov\all \beta}\hat\pd_\beta+
\widehat  a_{\ov\all \ell}\widehat\pd_{\ell}''$ and $
\widehat  X_{m}'=\widehat\pd_{m}'+2\RE (\widehat B_{m \beta}\widehat \pd_\beta)+\widehat b_{m\ell}\widehat\pd_{\ell}''$. Their coefficients are determined as follows. We have
\gan
\widehat X_m=H_*X_m, \quad C_{\ov\all \ov\beta}\widehat Z_{\ov\beta}=H_*Z_{\ov\all}
\end{gather*}
with $C_{\ov\all \ov\beta}\circ H
=\delta_{\ov\all \ov\beta}+ (A_{\ov\all \gaa}\pd_\gaa  +
a_{\ov\all \ell}\pd_{\ell}'') \ov f_\beta.$ Also
\aln
C_{\ov\all \ov\beta}\circ H
&=\delta_{\ov\all \ov\beta}+ (A_{\ov\all \gaa}\pd_\gaa  +
a_{\ov\all \ell}\pd_{\ell}'') \ov f_\beta,\\
(C_{\ov\all \ov\gaa}\hat A_{\ov\gaa \beta})\circ H&=\pd_{\ov\all}f_\beta+
A_{\ov\all \beta}+(A_{\ov\all \gaa}\pd_\gaa  +
a_{\ov\all \ell}\pd_{\ell}'')f_\beta,\\
(C_{\ov\all \ov\gaa}\hat a_{\ov\gaa \ell})\circ H&=\pd_{\ov\all}g_\ell+a_{\ov\all \ell}+(A_{\ov\all \gaa}\pd_\gaa  +
a_{\ov\all m}\pd_{m}')g_\ell,\\
\hat B_{m\beta}\circ H&=\pd_m'f_\beta+B_{m \beta}+(2\RE (B_{m \gamma} \pd_\gamma)+b_{m \ell}\pd_{\ell}'')
f_\beta,\\
\hat b_{m\ell}\circ H&=\pd_m'g_\ell+b_{m \ell}+(2\RE (B_{m \gamma} \pd_\gamma)+b_{m\ell'}\pd_{\ell'}'')g_\ell.
\end{align*}
By an abuse of notation, define %for $\hat z=f(z,x,y), \hat x=g(z,x,y)$,
\aln
\widehat{\A}_\beta\circ H&=\widehat A_{\ov\all\beta }\circ Hd\ov z_\all+\widehat B_{m\beta}\circ Hdt_m,
\\
\hat{\mathfrak b}_\ell\circ H&=2\RE\{\hat a_{\all \ell}\circ Hdz_\all\}+\hat b_{m\ell}\circ Hd t_m.
\end{align*}
Thus,  $H$  transforms $\{Z_{\ov \gamma},X_m\}$ into the span of $\{\widehat\pd_{\ov\del},\widehat\pd_{m'}'\}$ if and only if
$\widehat A_\beta$, $\widehat B_\beta$, $\hat a_\ell$, and $\hat b_\ell$ are zero, i.e.
\ga\label{6p32}
\cL Df+\B+\langle(\B,\mathfrak b), \pd_{z,t}  f\rangle=0, \\
 dg+\mathfrak b+\langle(\B,\mathfrak b), \pd_{z,t}    g\rangle=0.
\label{6p33}
\end{gather}
When $\widehat \A=(\B,\mathfrak b)$ is non-zero,   we have,  for $\mathbf d$ defined by \re{AdA},
\ga\label{ChAH}
(\cL C\widehat {\A})\circ H=\mathbf dh+\A+\langle\A , \pd_{z,t}   h\rangle, \quad h:=(f,g), \quad \cL C\widehat{ \A}=(C\widehat{\B},\hat{\mathfrak b}).
\end{gather}
%  Then
%\al
%\label{6p36}
%(C\hat {\A}, \hat{\mathfrak a})\circ F=
%  (d'f,dg)+(\A,\mathfrak a) +(\mathcal I,\mathcal R)((\A,\B)D(f,g)).
%\nonumber\end{align}
As in  Webster~\ci{We89}, we apply   homotopy formulae.
 In our case,
we apply the (approximate) Poincar\'e lemma and the  Koppelman-Leray formula for forms
of degree $1$ and $(0,1)$ to find approximate solutions to \re{6p32}-\re{6p33}. We will then use the compatibility conditions \re{6p14}-\re{6p18}
and two homotopy formulae to verify that
the solutions $f$ and $g$ are indeed    good approximate solutions to \re{6p32}-\re{6p33}.

%Let $B_\rho$, $B_\rho', B_\rho''$   be the balls of radius $r$ in  $\cc^n$, $\rr^M$, and $ \rr^L$,  respectively.
 Set $D_\rho=B_\rho^{2n}\times B^M_\rho\times B_\rho^L$. Recall that $\cL D =d^0+\db$ is defined in $\cc^n\times\rr^M$ and $d$ is the  standard real differential in $\cc^n\times\rr^M$.
We have
homotopy formulae
\aln \var_{q}&=\cL DT_{B^{2n}_\rho\times B_{\rho}^M}\var_{q}
 +T_{B^{2n}_\rho\times B_{\rho}^M}\cL D  \var_{(0,q)},  \quad q>0,\\
  \psi_{q}&=dR_{B^{2n}_\rho\times B_{\rho}^M}\psi_{q}
 +R_{B^{2n}_\rho\times B_{\rho}^M} d\psi_{q}, \quad q>0.
\end{align*}
 where $T_\rho$ is defined by \re{hvar},  and $R_{\rho}$ is defined by \re{pht} in which $\rr^M, B_\rho$ are replaced by $\cc^n\times\rr^M, B^{2n}_\rho\times B_{\rho}^M$ respectively.
Define $\cL T_{\rho}(\B,\mathfrak b):=(T_{B_\rho\times B_\rho'}\B,R_{B_\rho\times B_\rho'}\mathfrak b)$.
Thus we have the homotopy formula
$$
\A=\mathbf d\cL T_\rho\A+\cL T_\rho\mathbf d\A.
$$
To find approximate solutions to \re{6p32}-\re{6p33}, we take
\ga\label{6p39} %\nonumber %
h=-\cL T_\rho\A +(\cL T_\rho\A)(0).
\end{gather}
Note that $g$ is real-valued, as required.
By \re{6819}, formally, we obtain $|\widehat {\A} |\ple |\A|^2$.
 However,
the argument cannot be repeated infinitely many times as $\hat A$ in \re{ChAH} is less smooth than $\cL A$
because of no gain in derivative from the estimate of $ \cL T_\rho\A$.

Instead, we use a smoothing operator $S_\tau$ in $(z,t,\xi)$ variables on $D_\rho$, and take
\ga\label{6p46} %\nonumber %
H(z,t,\xi)=(z+f_\tau(z,t,\xi),t,\xi+g_\tau(z,t,\xi)),\\
h_\tau=(f_\tau,g_\tau):=-S_\tau   \cL T_\rho\A+S_\tau \cL T_\rho\A(0).
\label{6p47}%\nonumber %
\end{gather}
On shrinking domains $D_{\rho_1 }$, we have
\begin{equation} \nonumber %
[S_\tau,\mathbf d]=0.
\eeq
As in \ci{We89},  we apply the homotopy formula,  a second time, to write $\mathbf d\cL T_\rho\A$ as $\A-\cL T_\rho\mathbf d\A$. We obtain
\aln %label{6p50} \nonumber %
\A+ \mathbf dh_\tau  &=\A -\mathbf dS_\tau \cL T_\rho\A =\A -S_\tau\mathbf d \cL T_\rho\A\\
&=(I-S_\tau )\A+S_\tau \cL T_\rho\mathbf d\A.
%\nonumber
\end{align*}
Now equation \re{ChAH} for $\widehat\A$ is changed to
\al   \label{6p50+}
%\nonumber %
(\cL C\widehat A )\circ H
= \A + \mathbf dh_\tau+\langle\A, \pd h_\tau\rangle=I_1+I_2+I_3
\end{align}
where $\cL C\widehat{\A}=(C\widehat{\B},\hat{\mathfrak b})$ and
\ga
\label{6p51}
%\nonumber %
I_1=(I-S_\tau )\A,\quad   I_2=S_\tau
\cL T_\rho{\mathbf d}\A,\quad  I_3= \langle\A , \pd  S_\tau \cL T_\rho\A\rangle.
\end{gather}
We introduce
\ga
C\circ H-I=I_4, \quad I_4=\langle \A,\pd S_\tau \A\rangle.\label{717} %\nonumber %
\end{gather}
Here $
I_{4,\ov\all\beta}=-(A_{\ov\all \gaa}\pd_\gaa  +
B_{\ov\all \ell}\pd_{\ell}'')S_\tau (P_\rho\A)_\beta$. We also have
\ga
\mathbf d\A=\langle\psi(\A)\A,\pd\A\rangle.
\label{6p54}\end{gather}
Notice  that all   $I_i$ have the smoothing operator $S_\tau $. And
 $I_4$ will be estimated as $I_3$ because they have the same form. The
$I_1$   will
dictate the regularity result when we apply the iteration method.

\setcounter{thm}{0}\setcounter{equation}{0}
\section{The proof   for  a family of complex Frobenius structures
}\label{sec7}

In this section, we prove the general version of our theorem. Recall that the proofs in section~\ref{sec5}  rely on  gaining one full derivative in the  Koppelman-Leray homotopy formula. Such a gain does not exist in the general case. Thus in an iteration
procedure, we need to apply the Nash-Moser smoothing methods. By using smoothing operators, we will also be able
to get rid of the extra one derivative required in the non-parametric  variables, i.e., we will consider complex
Frobenius structures of class $\cL C^{r,s}$ requiring $r\geq s$ and $r>1$, instead of $r>s+1$.
%
%For the latter purpose, we will apply one smoothing operator in all variables. We will apply
%the Nash-Moser smoothing
%methods which does not require to shrink the domains in parameter space, by extending the Frobenius structure
%to a larger domain in the parameter space, which has been achieved in section.

Besides the more elaborated use of homotopy formula, we will also need to use the interpolation of H\"older norms
to estimate the norms for new complex Frobenius structures  and to obtain rapid convergence in higher order
derivatives via the rapid convergence in lower order derivatives. Note that the interpolation is
avoided in the rapid KAM arguments in section~\ref{sec5}.

%To further simplify notation, we denote
%\gan
%%\|\A\|_{m,j}\equiv\|(\A,\B)\|_{m,j}, \quad
%\|f\|_{m,j}\equiv\|(f,g)\|_{m,j}\end{gather*}

We first recall estimates for smoothing operator
\al\label{7p1}
\|S_\tau f\|_{D,b}&\leq C_{b-a}\tau^{a-b}\|f\|_{D',a},\quad 0\leq b-a<\infty;\\
\|f-S_\tau f\|_{D,a}&\leq C_{a-b}\tau^{b-a}\|f\|_{D',b}, \quad 0\leq b-a\leq m_0.
\label{7p2}
\end{align}
Here $S_\tau $ depends on $m_0\in \nn$. See Moser~\ci{Mo62}, where the above inequalities were stated and proved
for integers $a,b$. The general case of real numbers $a,b$ was  derived in \ci{GW12} by interpolation in H\"older norms for domains of the cone property.

For the H\"older spaces with parameter, we will use the same smoothing operator. The approximation is subtle and we derive it in full details.
Fix a positive integer $L$.
There exists a
   smooth function $\varphi$ in $\rr^{N_0}$ with compact
support in the unit ball such that
\eq{intP}
 \int P(x)\psi(x)\, dx=P(0),
\eeq
where $P$ is a polynomial of degree at most $L$; see Moser~\ci{Mo62}.
The
smoothing operator for functions $f$   with compact support
is
$$
S_\tau f=\psi_\tau * f, \quad \psi_\tau =\tau ^{-N_0}\psi(\tau x), \quad \tau >0.
$$
%Moser's proof requires that $a,b$ be integers.
%For our application, we will remove this restriction
%and replace it with
%the assumption that
%  $D$ has \ps.

%
%We will use the following Seeley extension operator~\ci{Se64}.
%\le{seeleyext} Let $H=\rr^d\times[0,\infty)$.
%There is a continuous linear extension operator
%$$E: C_0^0(H)
%\to C_0^0 (\rr^{d+1})$$
% such that $Ef=f$ on $H$ and
% $E: C_0^a(H)
%\to C_0^a (\rr^{n+1})$ is continuous  for each $a\geq0$.
%\ele

It will be convenient to use Seeley extension operator~\ci{Se64} to extend $\{f^\la\colon0\leq t\leq1\}$ to
a larger family defined for $\la$ in a larger interval, say $\tilde I=[-1,2]$.
Let us recall the extension.  Seeley~\cite{Se64} showed that there
 are  numerical sequences $\{a_k\}_{k=0}^\infty, \{b_k\}_{k=0}^\infty$   so that ($i$) $b_k < 0$ and $b_k\to-\infty$, $(-1)^ka_k>0$,
$(ii)$ $\sum_{k=0}^\infty |a_k|\cdot |b_k|^n<\infty$  for $n = 0, 1, 2, \ldots$, $(iii)$ $\sum_{k=0}^\infty  a_k(b_k)^n=1$  for
$n = 0, 1, 2, \ldots$.  When $f^\la=0$ for $t\geq1/2$, we can define   the extension
\eq{seeley}
(E^sf)(y)=\sum_{k=0}^\infty a_k\phi(b_ks)f^{b_ks}(y), \quad s\leq0.
\eeq
Here $\phi$ is a $\cL C^\infty$ function satisfying $\phi(\la)=1$ for $\la<1$ and $\phi(\la)=0$
for $\la>2$. For a differential form $f$, we define $Ef$ by extending coefficients of $f$ via $E$.
Using a partition of unity on $[0,1]$, we obtain an extension $E\colon \cL C^0([0,1])\to \cL C^0_0((-1,2))$.  Applying the extension to the parameter $\la$, we define   function $\{E^\la f\}\in \cL C^{0,0}(\ov D)$ for $\la\in[-1,2]$.  Furthermore,
\eq{}
|\{E^\la  f\}|_{r,s}\leq C_{r,s}|f|_{r,s}, \nonumber %
\eeq
where $C_{r,s}$ depends on $r$ and $\cL C^r$ coordinate charts mapping $\pd D$ into the half-space, and $D$ is independent of $\la$. In our applications, $D$ is a ball in a Euclidean space of which the radius is bounded between two fixed positive numbers. Therefore, $C_{r,s}$ depends only on $r,s$.

We will use two smoothing operators.  The first smoothing operator is $S_\tau$, applied to all variables $(y,\la)$. This is suitable for functions in $\cL C^{r,s}$ for $r=s$. When $r>s$,
 we will use a {\it partial} smoothing operator $S_\tau$, by smoothing in variables $y\in D$.  Let us exam the effects of partial smoothing operator,
which is still denoted by $S_\tau$,  using \re{7p1}-\re{seeley}.
For   real numbers $a$, $b$ and $s$, define
\gan
d_s(a,b)=\tilde d_s(a,b)=b-a, \quad  \ s\in\nn;  \\
 d_s(a,b)=
\min(b-a, [b]-[a]), \quad  \tilde d_s(a,b)=
\max(b-a, [b]-[a]), \quad   s\not\in\nn.
\end{gather*}
When $s$ is not an integer,  $d_s(a,b)>0$ if and only if $[b]>[a]$.  Furthermore,
if $\{r\}\geq\{s\}$, then
\eq{}\nonumber
d_s(a,b)>1\Leftrightarrow
 \text{$b>a+1$ and $s\in\nn$, or $[b]\geq[a]+2$}.
\eeq

The following is a basic property of partial smoothing operator.
\pr{st} Let $D\subset D'$ be domains in $\rr^N$.  Suppose that $D$ has \ps.
 Let $0< \tau < \min\{1,\dist(D,\partial D')\}$. Then
\ga
|S_{\tau}f|_{D; b,s}
\leq
C_{b-a}\tau^{-\tilde d_s(a,b)}|f|_{D'; a,s}, \quad   0\leq a\leq b<\infty,\label{sftaj}
\\
|f-S_{\tau}f|_{D; a,s}
\leq
C_{b-a} \tau^{d_s(a,b)}|f|_{D'; b,s},\quad
  0\leq a\leq b<a+m_0,\label{fstfds}
\\
\|S_{\tau}f\|_{D; b,s}
\leq
C_{b-a}\tau^{-\tilde d_s(a,b)}\|f\|_{D'; a,s}, \quad0\leq  s\leq a\leq b<\infty,\label{stfj+}
\\
\|f-S_{\tau}f\|_{D; a,s}
\leq
C_{b-a} \tau^{d_s(a,b)}\|f\|_{D'; b,s},\quad
  0\leq s\leq a\leq b<a+m_0. \label{fsftds+}
\end{gather}
Here
    $ C_{a}$  depends on two constants $C_1^*,C_2^*$
in \ps\  of $D$ and bounded from above by an upper bound of $a$.
\end{prop}
\begin{proof} It suffices to verify the inequalities for $0\leq a<1$ and  $0\leq s<1$, since
$$
d_s(a,b)=d_{s-n}(a,b)=d_s(a-m,b-m), \quad
\tilde d_s(a,b)=\tilde d_{s-n}(a,b)=\tilde d_s(a-m,b-m),
$$
for integers $m,n$.
Let $\pd^k$ be a derivative in $y\in D'$ of order $k$. Let $\pd _\la$ be the partial derivative in parameter $\la$.
We have $\pd^k S_\tau f^\la =S_\tau  \pd^kf^\la $ and $\pd _\la S_\tau f^\la =S_\tau \pd_\la f^\la $ on shrinking domain $D'\times[0,1]$.

(i) Recall from \re{intP} that there is   a smooth function $\chi$  with   support in $|y|<1/2$  so that
  \eq{moserchi}
\int\chi(y)\, dy=1, \quad \int\chi(y)y^I\, dy=0, \quad 0<|I|\leq m_0+1.
\eeq
%For the existence of such $\chi$, see Moser~\ci{Mo62}.
Let  $\chi_\tau (y)=\tau^{-n}\chi(\tau ^{-1}y)$.
We have $S_\tau  f^\la =f^\la  \ast\chi_\tau $. Thus $\|\{S_\tau   f^\la\} \|_{D;a,s}\leq
C\|f\|_{D';a,s}$. We have $\pd^k S_\tau  f^\la =f^\la \ast\pd^k\chi_\tau $. Since
$$\tau <\tau_0:=\min\{1,\dist(D,\partial D')\},$$
 we  have
$$
|\pd^k(\chi_\tau (y))|\leq C_k\tau ^{-k-n}\sum_{|I|\leq k}|(\pd^I\chi)(\tau ^{-1}y)|.
$$
This shows that $|S_\tau   f|_{D;a+k,s}\leq C_k\tau ^{-k}|f|_{D';a,s}$, which
also gives us the first inequality when $a, b$ are an integer. For
$k-1<b< k$, we write $b= a+\theta  k$ and obtain
$$
 |S_\tau   f|_{D;b,[s]}\leq C_b|S_\tau  f|_{D';a,[s]}^{1-\theta }
 |S_\tau   f|_{D';a+k,[s]}^\theta  \leq C_b'\tau ^{a-b}|f|_{D';a,[s]}.
 $$
When $ \{s\}>0$, we have for $[b]=[a]+\theta ([b]-[a])$
$$
\left|\f{ \pd_{\la'}^{[s]}S_\tau   f^{\la'}-\pd_{\la}^{[s]}S_\tau   f^{\la}}{|\la'-\la|^{\{s\}}}\right|_{D;[b]}
\leq C_b|S_\tau  f|_{D';[a],s}^{1-\theta }
 |S_\tau   f|_{D';[a]+k,s}^\theta  \leq C_b'\tau ^{[a]-[b]}|f|_{D';a,s}.
 $$

(ii) Let $k\leq b< k+1$. Let $y\in D$.
 Let $P^\la _k(y,y')$ be the Taylor polynomial of $f^\la $ of degree $k$
about $y$ so that
$$
|\pd_{\la}^{[s]}f^\la (y+y')-\pd_{\la}^{[s]}P^\la _k(y, y')|\leq C_b|f|_{D';b,[s]}| y'|^b,   \quad |y'|<\tau_0.
$$
By \re{moserchi},
$\int P^\la _k(y,y')\chi(y')\, dy'=f^\la (y).$
We have
\aln
S_\tau  f^\la (y)& =f^\la (y) +\int \{f^\la (y-\tau y')-P^\la _k(y,-\tau y')\}\chi(y')\, dy'\\
 & =f^\la (y)+ \mathcal I^\la (y).
\end{align*}
We have $|f^\la (y+\tau y')-P^\la _k(y,\tau y')|\leq C_b|f|_{D';b,[s]}\|\tau y'\|^b$ for
  $|\tau y'|<\tau_0$. Therefore
$$
|\mathcal I^\la(y)|\leq C_b\tau ^b|f|_{D';b,[s]}
\int_{|y|<1/2} |y|^b|\chi(y)|\, dy\leq C_b'\tau ^b|f|_{D';b,[s]}.
$$
This completes the proof of the second inequality
when $a=0$. For $0<a<1$, we have
\aln
\left|\pd_{\la}^{[s]}f^\la-\pd_{\la}^{[s]}S_\tau  f^\la \right|_{D;a}&\leq C_{b}\left|\pd_{\la}^{[s]}f^\la -\pd_{\la}^{[s]}S_\tau f^\la \right|_{D;0}^{\f{b-a}{b}}\left|\pd_{\la}^{[s]}f^\la -\pd_{\la}^{[s]}S_\tau f^\la \right|_{D;b}^{\f{a}{b}}
\\
& \leq C_b'( \tau ^b|f|_{D';b,[s]})^{\f{b-a}{b}}\|f\|_{D';b,[s]}^{\frac{a}{b}}.
\end{align*}
We can also estimate the H\"older ratio in $t$ as in ({\it i}). This
 completes the proof.
  \end{proof}
\begin{rem}
The cone property is used only for interpolation. Thus
 \rpa{st} hold for integral $a,b$ without \ps\ of $D$. We will use \re{sftaj} and \re{fsftds+} only for $b-a=1$, in which case $\tilde d_s(a,b)=1$.
\end{rem}

The following is the main ingredient in the Nash-Moser iteration procedure. Another ingredient is the interpolation inequality. As in Gong-Webster~\ci{GW12}, it is useful to adjust the parameters for iteration to avoid unnecessary loss of regularity.
\pr{NashMoser} Let $\kappa_0,\kappa, d$ be numbers bigger than $1$. Suppose that
\eq{goldenratio}   %
(\kappa -1)d>1.
\eeq
Let $P,P_0$ be polynomials of non-negative coefficients.
Let $K_j$ be a sequence of  numbers satisfying
$$
0\leq K_j\leq e^{P(j)}
$$
 for $j\geq0$.
 Let $\hat L_0\geq0$.
Suppose that $\kappa_*,d_*$ satisfy
\eq{ksds}   %
  d>d_*\kappa_*,     \quad d_*>1,\quad  \kappa_0>\kappa_*\geq1, \quad d_*(\kappa-\kappa_*)>1.
\eeq
Let $\tau_{j+1}=\tau_j^{d_*}$ for $j\geq0$.
There exist   positive numbers $\hat a_0$, $ \hat\tau_0$ in $(0,1)$, which depend only on $\hat L_0,\kappa,\kappa_0,\kappa_*,d,d_*, P$ and  $P_0$  so that if $0<\tau_0\leq\hat\tau_0$ then $\tau_j\leq e^{-P_0(j)}$, and furthermore if $0\leq L_0\leq\hat L_0$, $0\leq a_0\leq \hat a_0$  and $a_j, L_j$ are two sequences of nonnegative  numbers satisfying
\ga\label{aj+1}
  a_{j+1}\leq K_j(\tau_j^{d}L_j+a_j^{\kappa_0}L_j+\tau_j^{-1}a_j^\kappa),\quad j\geq0,\\
\label{Lj+1}
 L_{j+1}\leq K_j(1+\tau_j^{-1}a_j)L_j, \quad j\geq0,
\end{gather}
then
for $j\geq0$ and $P_1(j)=j+P(1)+\dots+P(j)$, we have
\eq{ajLj0}
a_{j}\leq \tau_{j}^{\kappa_*}, \quad L_{j+1}\leq L_0 e^{P_1(j)}.
\eeq
\epr
\begin{rem} Condition \rea{goldenratio} ensures the existence of $\kappa_*,d_*$ that satisfy \rea{ksds}.
%\eq{k1ed2}
%d_*=1+\e,
%\eeq
\end{rem}
\begin{proof} Suppose that $0<\tau_0<1$.
By the definition of $\tau_j$,   $\tau_j=\tau_0^{d_*^j}$. By \rl{defrapid}, we have
$$
\tau_j^{\kappa_*-1}\leq 1,\quad \tau_j\leq e^{-P_0(j)},$$
 when $\tau_0\leq \hat\tau_0$. Here $\hat\tau_0$ depends on $P_0$. Suppose that we can verify the first inequality in \re{ajLj0}.
   From \re{Lj+1} we also have
$$
L_{j+1}\leq 2K_jL_j\leq e^{1+P(j)}L_{j}\leq   e^{{P_1(j)}}L_0.
$$
This shows the second inequality in \re{ajLj0}. For the first inequality,  we get $a_0\leq\tau_0^{\kappa_*}$ by requiring $\hat a_0\leq r_0^{\kappa_*}$.  By \re{aj+1}  we deduce
\ga \nonumber %
\tau_{j+1}^{-\kappa_*}a_{j+1}\leq \tau_j^{-\kappa_*d_*}\left\{L_0e^{P(j)
+P_1(j)}(\tau_j^d+\tau_j^{\kappa_0 d_*})+e^{P(j)}\tau_j^{\kappa d_*-1}\right\}.
\end{gather}
For the three components of $\tau_j$, we have $d-\kappa_*d_*>0$, $(\kappa_0-\kappa_*)d_*>0$, and $\kappa d_*-1-\kappa_*d_*=d_*(\kappa-\kappa_*)-1>0$. This shows that $\tau_j^{\kappa-\kappa_*d_* }$, $\tau_j^{(\kappa_0-\kappa_*)d_* }$, and $\tau_j^{(\kappa -\kappa_*)d_*-1}$ converge to zero rapidly.   By \re{defrapid}, we have
$$
\tau_j^{-\kappa_*d_*}\left\{L_0e^{P(j)
+P_1(j)}(\tau_j^d+\tau_j^{\kappa_0 d_*})+e^{P(j)}\tau_j^{\kappa d_*-1}\right\}\leq1,
$$
which is achieved by choosing $\hat\tau_0$ that depends on $\kappa_0,\kappa,\kappa_*,d,d_*, \hat L_0,P,\kappa_0$, as well as $P_0$ indicated early.
Note that $\hat a_0$ also depends on $\hat\tau_0$.
We have proved that $\tau_{j+1}^{-k_*}a_{j+1}\leq1$.
\end{proof}

%We need to consider the effect on smoothing operator $S_\tau$ on $Q_{r,s}$ and $Q_{r,s}'$.
%\le{dQrs} Let $r\geq s+1$.
%\ga
%Q_{r,s}(u,\pd S_tv)\leq t^{-1}Q_{r,s}(u,v),\quad Q_{r,s}'(u,\pd S_tv)\leq t^{-1}Q_{r,s}'(u,v),\\
%Q_{r,s}(u,S_t\pd v)\leq t^{-1}Q_{r,s}(u,v)+Q_{r,s}'(u,v)+|u|_{1,0}\|v\|_{r,s}+|v|_{1,\{s\}}\odot|u|_{r,0},\\
%Q'_{r,s}(u,S_t\pd v)\leq t^{-1}Q'_{r,s}(u,v)+Q_{r,s}''(u,v)+|u|_{2,0}\|v\|_{r,s}+|v|_{1,1+\{s\}}\odot|u|_{r,0}.
%\end{gather}
%\ele
%\begin{proof}Since $r-[s]\geq1$, it is easy to see that
% $|u|_{0,j}|S_t\pd v|_{r-j,0}\leq t^{-1}Q_{r,s}(u,v)$. For $j\geq1$, we have
% $$
% |u|_{r-j,0}|S_t\pd v|_{0,j}\leq|u|_{r-j,0}|v|_{1,j}\leq |u|_{r-j+1,0}|v|_{0,j}+|u|_{1,0}|v|_{r-j,j}.
% $$
%Also $
% |u|_{r,0}\odot|S_t\pd v|_{0,\{s\}}\leq |u|_{r,0}\odot|v|_{1,\{s\}} .
% $
%
% It is easy to see that
% $|u|_{0,j}|S_t\pd v|_{r-j+1,0}\leq t^{-1}Q'_{r,s}(u,v)$. For $j\geq2$, we have
% $$
% |u|_{r-j+1,0}|S_t\pd v|_{0,j}\leq|u|_{r-j+1,0}|v|_{1,j}\leq |u|_{r-j+2,0}|v|_{0,j}+|u|_{2,0}|v|_{r-j,j}.
% $$
%Also $
% |u|_{r,0}\odot|S_t\pd v|_{0,1+\{s\}}\leq|u|_{r,0}\odot|v|_{1,1+\{s\}}.
% $
%\end{proof}

Let us restate  \rt{thm1} here. We also organize the statements in the order of the proof.
Recall that $r,s$ must satisfy \re{rsrs}.
\th{thm1+}
Let $\{S^\la\}\in \cL C^{r,s}$ be as in \rta{thm1}.
%Let $r$ and $s$ be real numbers or $\infty$.
% Let  $\{S^ \la\}$ be a family of Levi-flat complex Frobenuis structure of class $\cL C^{r,s}$ with $r>1$ and $\{r\}\geq\{s\}$.
%   Near each $p\in D$
% there exist an open subset $U$  containing $p$
% and diffeomorphisms $\{F^\la\}$ mapping $U$ into $\cc^n\times\rr^K\times\rr^L$, that transform $\{S^\la\}$ into the standard structures spanned by $\pd_{\ov\all}, \pd_{i}$ with $1\leq i\leq n$,  $1\leq i\leq K$.
Then we can find $\{F^\la\}\in \cL C^{1,0}$ satisfies the assertion in \rta{thm1}. Furthermore, we have
 \bppp
 \item
 $\{F^\la \}\in\cL C^{r_-,s}(  U )$, provided
  $  r>s+2\in\nn$.
\item $\{F^\la \}\in\cL C^{\infty,s}(  U )$,  provided $r=\infty$, and $ s\in [1,\infty]$.
  \item
 $\{F^\la \}\in\cL C^{r_-,s_-}(  U )$, provided   $ r\geq s+3\geq4$ and $ \{r\}\geq\{s\}>0$.
 \item $\{F^\la \}\in\cL C^{r_-,r_-}(  U )$, provided $r=s>1$.
 \item
 $\{F^\la \}\in\cL C^{r_-,0}(  U )$, provided
  $r>1$.  \eppp
%\bppp
%\item $
% \{F^\la\}\in\cL C^{r_-,s}(\ov U)$,
% provided $(${\it a}$)$   $r>s+2 \in\nn;$ or $(${\it b}$)$ $r>1$ and $s=0$.
% \item$
% \{F^\la\}\in\cL C^{\infty,s}(\ov U)$,
% provided $r=\infty$.
% \item   $
% \{F^\la\}\in\cL C^{r_-,s_-}(\ov U)\cap \cL C^{[r]_-,s}(\ov U)$,
% provided  $[r]>[s]+2\geq3$ and $\{r\}\geq\{s\}>0$.
%\item $\{F^\la\}\in\cL C^{r_-,r_-}$,
%  provided $r=s>1$.
% \eppp
\eth

\begin{proof}
For the first three parts we will not apply the Nash-Moser smoothing operator to
 the parameter $\la$. The integrability condition \re{6p54m} already involves a derivative in $x\in\rr^N$. Therefore, for the iteration method to work we requires that $\{S^\la\}\in \cL C^{r,s}$ with $r\geq s+1$.

 \medskip

 ({\it i }\!\!)
 Let us recall the approximation for a change of coordinates described at the end of section~\ref{sec6}.
Recall that $\A$ is defined by \re{AdA} and \re{a73} for the adapted frame of $S$. Applying the transformation $H$ defined by $H=I+h_\tau$
in \re{6p46}-\re{6p47}, we obtain a new complex Frobenius structure $\widehat S=H_*S$ of which the corresponding $\widehat{\A}$ has
the form
\al \label{6p50+m}% \nonumber %
(\cL C\widehat{\A })\circ H
= I_1+I_2+I_3.
\end{align}
Recall from \re{6p51} that  $\cL C\widehat{\A}=(C\widehat{\B},\hat{\mathfrak b})$ and
\ga
%\label{6p51m}
 \nonumber %
I_1=(I-S_\tau )\A,\quad   I_2=S_\tau
\cL T_\rho{\mathbf d}\A,\quad  I_3= \langle\A,\pd S_\tau \cL T_\rho\A\rangle.
 \end{gather}
By \re{717},  the matrix $C$ satisfies
$$
I_4:=C\circ H-I
 =  \langle\A,\pd S_\tau \cL T_\rho\A\rangle.
 $$
 By \re{6p54}, the integrability condition has the form
 \ga
\mathbf d\A=\langle\psi(\A)\A,\pd\A\rangle.
\label{6p54m}\end{gather}
The estimates for
 $I_4$ and $I_3$ will be identical. By \re{6p50+m}, we obtain
 \eq{i4321} \nonumber %
 \widehat{\A}\circ H=(I+I_4)^{-1}(I_1+I_2+I_3).\eeq

 We first assume that
\eq{irs1}\nonumber %
\infty>r\geq s+1,\quad \{r\}\geq\{s\}.
\eeq
In what follows, we assume that
\eq{hyper0}
\|\A\|_{\rho;s+1,s}\leq K^{-1}_0(s,\theta)\tau.
\eeq
Here $
 K_0(r_0,\theta):=2{K(r_0+2,\theta)}\cdot\f{C_{N}}{\theta}$, which is analogous to \re{dfK0}. The factor $\tau$
 is needed in reflecting the need of applying Nash-Moser smoothing operator.
We need to derive estimates on shrinking domains. Set
$$\tilde\rho_i=(1-\theta)^i\rho, \quad i=1,2,3.
$$
To apply estimates for $S_\tau$, we assume that
\eq{taurho}
0<\tau<\rho\theta/C_0.
\eeq
By   estimate \re{Pqva} on
$D_{\tilde\rho_1}$ for the homotopy formulae and estimate  on $D_{\tilde\rho_2}$ for $S_\tau$, we obtain
\eq{stTr}
|S_\tau \cL T_\rho\A|_{\tilde\rho_2;\ell,a}\leq   K(\ell+1)|\A|_{\rho;\ell,a}.
\eeq
Therefore, we have
\eq{estI0}\nonumber %
\|I_1\|_{\tilde\rho_1;0,0} \ple\|\cL A\|_{\rho;0,0}, \quad  \|I_i\|_{\tilde\rho_2;0,0} \leq K(1)\|\cL A\|_{ \rho;0,0} \|\cL A\|_{ \rho;1,0},\quad 2\leq i\leq 4.
\eeq
 Note that $|I_4|_{\tilde\rho_2;0,0}\leq1/{(2N)}$ and $|I_i|_{\tilde\rho_2;0,0}\leq1.$  Applying the product rule in \rl{Aff-}, we obtain
 \al\label{hAHms}\nonumber %
 \|\widehat{\cL A}\circ H\|_{\tilde\rho_2;r,s}&=\|(I+I_4)^{-1}(I_1+I_2+I_3)\|_{\tilde\rho_2;r,s}\\
 &\ple\sum_{i=1}^3 \|I_i\|_{\tilde\rho_2;r,s}+\|I_i\|_{\tilde\rho_2;b,s}|I_4|_{\tilde\rho_2;0,0}+\|I_i\|_{\tilde\rho_2;0,0}
 \|I_4\|_{\tilde\rho_2;r,s}\nonumber
 \\
 &\quad +\sum_{i=1}^3\|I_i\|_{\tilde\rho_2;0,0}Q_{\tilde\rho_2,\tilde\rho_2;r,s}(I_4,I_4)+Q_{\tilde\rho_2,
 \tilde\rho_2;r,s}(I_i, I_4)\nonumber
 \\ &\ple\sum_{i=1}^4\|I_i\|_{\tilde\rho_2;r,s}+Q_{\tilde\rho_2,\tilde\rho_2;r,s}(I_i,I_4). \nonumber
 \end{align}
Recall that  $h=-S_\tau \cL T_\rho\A+S_\tau \cL T_\rho\A(0)$.
By \re{stTr},
 \eq{fms}
|h|_{\tilde\rho_2;\ell,a} \leq 2K(\ell+1)|\A|_{\rho;\ell,a}.
\eeq
Then \re{hyper0} and \re{fms} imply
\eq{h1ss}
\|h\|_{\tilde\rho_2;s+1,s}\leq\theta/{C_N}.
\eeq

To simplify notation, let $\widetilde\A=\widehat A\circ H$.
Write $\widehat \A=\widetilde\A\circ H^{-1}$.  By \re{h1ss} we can apply \re{AuF-1}, and by \re{fms}  we get from \re{AuF-1}
% \al
%\|\hat\A\|_{\rho_2;b,s}&\ple \|\hat\A\circ H\|_{\rho_1;b,s}+|\hat\A\circ H|_{\rho_1;1+s_*,s_*} \|h\|_{\rho_1;b,s}\label{hAbs}\\
%&\quad +Q'_{\rho_1,\rho_1;b,s}( \hat\A\circ H, h)+|\hat\A\circ H|_{\rho_1;1,0}Q'_{\rho_1,\rho_1;b,s}( h,   h).
%\nonumber \end{align}
\al
&\|\widehat\A\|_{\tilde\rho_3;r,s}\leq C_r\left\{\|\widetilde\A\|_{\tilde\rho_2;r,s}
+|\widetilde\A|_{\tilde\rho_2;1,s_*} \|\A\|_{\tilde\rho_2;r,s}+\|\widetilde\A\|_{\tilde\rho_2;s+1,s}\|\A\|_{\tilde\rho_2;r,s_*}\label{hAbs}\right.\\
 & \quad \quad\left.+\|\widetilde\A\|_{\tilde\rho_2;1,0}\|\A\|_{\tilde\rho_2;s+1,s}\odot\|\A\|_{\tilde\rho_2;r,s_*} +\|\widetilde\A\|_{\tilde\rho_2;r,s_*}\odot\|\A\|_{\tilde\rho_2;s+1,s}\right\}.\nonumber
 \end{align}
%
% Applying \rl{ff} to \re{i4321} and $h$,  we obtain
% \al
% Q'_{\rho_1,\rho_1;b,s}(\hat{\cL A}\circ H,h)&=Q'_{\rho_1,\rho_1;b,s}((I+I_4)^{-1}(I_1+I_2+I_3),h)\\
% &\ple \sum_{i=1}^4 Q'_{\rho_1,\rho_1;b,s}(I_i,h) +|h|_0 Q'_{\rho_1,\rho_1;b,s}(I_4,I_i).
%\nonumber \end{align}
%  Combining, we obtain
%  \al
%\|\hat\A\|_{\rho_2;b,s}&\ple\sum_{i=1}^4 \|I_i\|_{\rho_1;b,s}  + Q_{\rho_1,\rho_1;b,s}(I_i, I_4)+|\hat\A\circ H|_{\rho_1;1,0}Q'_{\rho_1,\rho_1;b,s}(h,h)\\
%& +(|\hat\A\circ H|_{\rho_1;1,\{s\}}+|\hat\A\circ H|_{\rho_1;0,s_*}) \|h\|_{\rho_1;b,s}
%\nonumber
%\\
%& +\sum_{i=1}^4 \{ Q'_{\rho_1,\rho_1;b,s}(I_i,h) +|h|_{ \rho_1;0} Q_{\rho_1,\rho_1;b,s}'(I_i,I_4)\}.
%\nonumber
% \end{align}
% This requires us  to further estimate $I_i$ and the above $Q_{b,s},$   $Q'_{b,s}$.

We need to estimate $\widetilde\A=(I+I_4)^{-1}(I_1+I_2+I_3)$ by the product rule.
By the properties of the smoothing operator, we get
\ga\label{i1Ls}
\|I_1\|_{\tilde\rho_1;\ell,s}\ple \tau^{d_s(b,\ell)}\|\cL A\|_{b,s}, \quad s\leq \ell\leq b<\ell+m_0,\\
\|I_1\|_{\tilde\rho_1;b,s}\ple  \|\cL A\|_{b,s},\quad s\leq b<\infty.
\label{i1Ls+}\end{gather}
Here and in what follows, we write $\|\A\|_{a,b}=\|\A\|_{\rho;a,b}$ and $|\A|_{a,b}=|\A|_{\rho;a,b}$ for simplicity.
 By \re{sftaj} and \re{stfj+},  we   have
\ga\label{i4ea} \nonumber %
|I_4|_{\tilde\rho_2;\ell,a}=|\langle\A,\pd S_\tau \cL T_\rho\A\rangle|_{\tilde\rho_2;\ell,a}\leq K(\ell)\tau^{-1}(|\A|_{\ell,a}|\A|_{0,0}+|\A|_{\ell,0} |\A|_{0,a}).
\end{gather}
Recall that $
\widehat Q_{b,s}(\A,\A)=\|A\|_{b,s}\|\A\|_{0,0}+Q_{b,s}(\A,\A)$. Since $I_3$ has the same form as $I_4$, then we have verified for $i=3,4$
\aln %\label{i4e0} \nonumber %
%\|I_i\|_{\rho_1;b,s}\leq K(b)\tau^{-1}\hat Q_{\rho_1,\rho_1;b,s}(\A,\A),\\
\|I_i\|_{\tilde\rho_2;r,s}&\leq K(r+2)\tau^{-1}(\|\A\|_{r,s}|\A|_{0,0}+|\A|_{r,0}\odot |\A|_{0,s})\\
\|I_i\|_{\tilde\rho_2;s+1,s}&\leq K(s+2)\|\A\|_{s+1,s}.\nonumber
\end{align*}
Here the second inequality follows from the first one and  $|\A|_{s+1,s}\leq K_0^{-1}(1)\tau$ by \re{hyper0}.

The estimates for $I_2$ need the integrality condition
 $d\A=\langle \psi(\A)\A,\pd\A\rangle$.  By \re{recallq+}  the latter gives us
\al
\label{7p2-}|I_2|_{\tilde\rho_2;\ell,a}&=|S_\tau
\cL T_\rho\mathbf d\A|_{\tilde\rho_2;\ell,a}\leq K(\ell)\tau^{-1}|
\cL T_\rho\mathbf d\A|_{\tilde\rho_1;\ell-1,a}\\
\nonumber & \leq
K(\ell)\tau^{-1}
\Bigl\{|\A|_{\ell-1,a}|\A|_{1,0}+ |\A|_{0,0}|\A|_{\ell,a}\\
\nonumber &\quad+|\A|_{\ell-1,0}\odot|\A|_{1,a}+|\A|_{0,a}\odot|\A|_{\ell,0}\Bigr\}\\
\nonumber & \leq K(\ell) \tau^{-1}
(|\A|_{\ell,a}|\A|_{0,0}+|\cL A|_{\ell,0}\odot\A|_{0,a}),\quad \ell\in[1,\infty),\\
\quad|I_2|_{\tilde\rho_2;0,s}&\leq K(1)(|\A|_{1,s}|\A|_{0,0}+|\A|_{0,s}|\A|_{1,0}).\nonumber
\end{align}
In particular, with $\|\A\|_{s+1,s }\leq\tau$  we have
\eq{i21s} \nonumber %
\|I_2\|_{\tilde\rho_2;s+1,s} \leq K(s+1) \|\A\|_{s+1,s}.
\eeq
Since $
[r]\geq [s]+1$, from \re{7p2-} we  conclude
\al\nonumber%\label{7p2}
\|I_2\|_{\tilde\rho_2;r,s}&%=\sum_{i\leq [s]}|I_2|_{\rho_1;m-i,i+\{s\}} %\leq \sum_{i\leq [s]} Kt^{-1} |I_2|_{m-i-1,i+\{s\}}
%\\ &
\leq K(r)
\tau^{-1}(\|\A\|_{r,s}|\A|_{0,0}+|\A|_{r,0}\odot |\A|_{0,s}). %\nonumber
\end{align}
Since $|I_i|_{\tilde\rho_2;0}\leq 1/2$ and $\widetilde\A=(I+I_4)^{-1}(I_1+I_2+I_3)$, we have
\al
\label{hAbs3} \|\widetilde\A\|_{\tilde\rho_2;r,s}&\ple \sum_{i=1}^4 \| I_i\|_{\tilde\rho_2;r,s}+ \sum_{i=1}^4  (\|I_i\|_{\tilde\rho_2;0,s}   \|I_4|_{\tilde\rho_2;r,0}+\|I_4\|_{\tilde\rho_2;0,s}   \|I_i|_{\tilde\rho_2;r,0})\\
&\ple
\nonumber
\|I_1\|_{\tilde\rho_2;r,s}+K(r+1) \tau^{-1}(\|\A\|_{r,s}|\A|_{0,0}+|\A|_{r,0}\odot |\A|_{0,s}).
\end{align}
Using $|I_i|_{\tilde\rho_2;s+1,s}\leq K(s+2)\|\A\|_{s+1,s}$ and \re{i1Ls+}, we   obtain its first consequence
\ga
\label{ha1s}\|\widetilde\A\|_{\tilde\rho_2;r,s}\leq K(r+1)\| \A\|_{r,s}.
\end{gather}
By  \re{hAbs}, we immediately obtain
\eq{Hder} \nonumber %
\|\widehat{\cL A}\|_{\tilde\rho_3;r,s}\leq K(r+1)\|{\cL A}\|_{r,s}.
\eeq
We can also take $b=s+1$ in \re{hAbs} and use \re{ha1s} to simplify all quadratic terms on the right-hand side of \re{hAbs}. We conclude
 $$
\|{\A}\|_{\tilde\rho_3;s+1,s}\leq C_r \|\widetilde\A\|_{\rho_1;s+1,s}   + K(s+2) \|\A\|_{s+1,s}^2.
$$
To improve it, we use \re{hAbs3} again by taking $b=s+1$. We now use \re{i1Ls} in which $\ell=s+1$ to achieve
   \al\label{hAbs2}
\|\widehat\A\|_{\tilde\rho_3;s+1,s}&\leq   C_r\tau^{d_s(r,s+1)}\|\cL A\|_{r,s} + K(s+2) \tau^{-1}\|\A\|_{s+1,s}^2.
 \end{align}
We have derived necessary estimates. We will use \re{hAbs2} to get rapid convergence in $\cL C^{s+1,s}$ norms, while \re{ha1s} is for controlling the high order derivative in  linear growth.  This allows us to achieve rapid convergence for intermediate derivatives via interpolation.

\medskip

Assume now that
$
d:=d_s(r,s+1)>1$, and $s$ is a positive integer.

 %That is that
%$$
%s\geq 1, \quad r>s+2\in\nn, \quad \text{or $[r]>[s]+2\geq3$ and $\{r\}\geq \{s\}>0$}.
%$$
%We will take    $m=s+1$.
%So we have
%\eq{dsms1}
%d_s(r,m)>1,\quad \{m\}\geq\{s\}.
%\eeq
%From definitions it follows that
%\ga\label{qqq}
%Q_{r,s}(\A,\A)+Q_{r,s}'(\A,\A)+
%Q''_{r,s}(\A,\A)\ple \|\A\|_{s+1,s}\|\A\|_{r,0}.
%\end{gather}
%Thus
%\ga
%R_{m,s}^2\ple \|\A\|_{s+1,s}\|\A\|_{m,s},\quad s\geq1
%\end{gather}
%provided
%\eq{}
%\tau^{-1}\|\A\|_{m,s}\leq 1.
%\eeq
%Note that we take $R_{m,s}^2=0$ for $s=0$. Thus we obtain for $s=0$ or $s\geq1$
% \ga
%\|\hat\A\|_{m,s}\leq  K(m)(\tau^{d_s(b,m)}\|\A\|_{b,s}+\tau^{-1}\|\A\|_{s+1,s}\|\A\|_{m,s}).
%\end{gather}

We briefly recall part of the proof of \rp{pnl} for nesting domains. We need to find a sequence of transformations $F_j$
so that $\tilde F_i^\la :=F^\la _i\circ\dots\circ F^\la _0$ converges to $\tilde F_\infty^\la $, while $S_{i+1}^\la :=(\tilde F^\la_i)_*S_i^\la $, with $S_0^\la $ being the original structure, converges to the standard complex Frobenius structure in $\rr^N$. Set for $i=0,1,\dots$
\ga
\rho_i=\frac{1}{2}+\frac{1}{2(i+1)}, \quad
\rho_{i+1}=(1-\theta_i)^3\rho_i.\nonumber
\end{gather}
Thus, we take $1-\theta_k^*=(1-\theta_k)^3$ in \rl{Ask2k}.
Let $\rho_\infty=\lim_{i\to\infty}\rho_i$.  Recall that
$$
D_\rho=B^{2n}_\rho\times B^M_\rho\times B_\rho^L.
$$
%where $B_\rho, B_\rho',B_\rho''$ are balls of radius $\rho$ centered at the origin of $\cc^n$, $\rr^M$ and $ R^L$, respectively.
 We have
    for $\rho_0/2<\rho<2\rho_0$
\eq{nestH}
H_i\colon D_{(1-\theta_i) \rho}\to D_{\rho},\quad H^{-1}_i\colon D_{(1-\theta_i)^2\rho}\to D_{(1-\theta_i) \rho}
\eeq
provided $\tau_i$ satisfies \re{taurho}, and \re{hyper0} holds for $\A_i$, i.e.
\ga\label{taurho+}
\tau_i\leq\rho_i\theta_i/C_0,\\
\label{4p6j}
  \|\A_i\|_{\rho_i;s+1,s}\leq K^{-1}_0(s+2,\theta_i)\tau_i,\quad K_0(s+2,\theta_i):=  \f{4K(s+2,\theta_i)}{(\rho_i\theta_i)^{s+2}}.
\end{gather}
Then \re{ha1s} and \re{hAbs2} take the form
\ga\label{Lderj} %\nonumber %
\| \A_{i+1}\|_{\rho_{i+1};s+1,s}\leq  K(s+2,\theta_i)(\tau_i^{d }\|\A_i\|_{\rho_i;r,s}+\tau_i^{-1}\|\A_i\|^2_{\rho_i;s+1,s}),\\
\label{Hderj}
\| {\cL A}_{i+1}\|_{\rho_{i+1};r,s}\leq K(r+1,\theta_i) \|{\cL A}_i\|_{\rho_i;r,s}.
\end{gather}
We want to apply \rp{NashMoser} in which $a_i=\|\A_i\|_{\rho_i;s+1,s}$ and $L_i=\|\A_i\|_{\rho_i;r,s}$.
Obviously, there are polynomials $P,P_0$  satisfying
$$
K(r,\theta_i)\leq e^{P(i)}, \quad  K_0(s+2,\theta_{i})\leq e^{P_0(i)}.
$$
Applying \rp{NashMoser} to \re{Lderj}-\re{Hderj}, we find   $d_*>1$, $0<\tau_0=\hat\tau_0<1$ such that for $\tau_{j+1}=\tau_j^{d_*}$,  we have
\ga\label{aiss}
\|\A_{i}\|_{\rho_{i};s+1,s}\leq \tau_{i}^{\kappa_*}
%, \\  \|\A_{i}\|_{\rho_{i};s+1,s}
 \leq K_0^{-1}(s+2,\theta_{i})\tau_{i}
%\label{aiss+}\nonumber %
\end{gather}
for some $\kappa_*>1$, provided we have
\eq{A0ha}
\|\A_0\|_{\rho_0;s+1,s}\leq\hat a_0.
\eeq
Here $\hat a_0$ and $\hat\tau_0$ are fixed now and they depend only on the polynomials $P$,  $P_0$, and $d$. We have achieved \re{A0ha} in \re{initiale} via an initial normalization and a dilation. Therefore, we have achieved \re{aiss} for all $i$.
 By \re{fms}, \re{Hderj} and \rl{defrapid}, we know that $\|h_i\|_{\rho_i;r,s}$ has  linear growth, i.e.
 \eq{hiHder}\nonumber %
 \|h_{i}\|_{ \rho_\infty;r,s}\leq  e^{P_r(i)}
 \eeq
 for some polynomial $P_r$.
  By \re{fms} and \re{aiss}, we have
 \eq{fmsi}\nonumber %
 \|h_{i}\|_{\rho_\infty;s+1,s}\leq K(s+2,\theta_i)\tau_i^{\kappa_i}.
  \eeq
 We know consider the convergence of $\tilde h_{i}^\la :=\tilde H_{i}^\la -\tilde H_{i-1}^\la =h_{i}\circ\tilde H_{i-1}^\la $. We have
 \aln
&\|\{h_{i}^i \circ H_{i-1}^\la \circ\cdots\circ H_0^\la \}\|_{\f{1}{2}\rho_\infty;r,s}\leq C_r^{i+1}  \Bigl\{\|h_{i}\|_{\rho_{i};r,s} \\
&\quad \left.+ |h_{i}|_{\rho_{i};1,0} \sum_{j\leq k<i} \|h_j\|_{\rho_{j};s+1,s}\odot\|h_k\|_{\rho_{k};r,s_*}
+|h_{i}|_{\rho_{i};1,s_*}\sum_{j<i} \|h_j\|_{\rho_j;r,s }
 \right.\\
&\quad +\sum_{j<i}\left\{\|h_{i}\|_{\rho_{i};s+1,s}\odot\|h_j\|_{\rho_{j};r,s_*}+\|h_j\|_{\rho_{j};s+1,s}
\odot\|h_{i}\|_{\rho_{i+1};r,s_*}\right\}\Bigr\}.
\end{align*}
This shows that $\|\tilde h_{i}\|_{ \rho_{\infty}/2;r,s}$ has  linear growth.
The above formula also holds when $r=s+1$. Then we see that $\|\tilde h_{i}\|_{\rho_{\infty}/2;s+1,s}$ converges rapidly.
We now consider intermediate H\"older estimates. Assume that $s+1<\ell<r$. By  interpolation in $x$ variables, we have
\al\label{aimp}\nonumber %
|\tilde h_{i}|_{\rho_{\infty}/2;\ell-i,i}&\leq C_r|\tilde h_{i}|_{\rho_{\infty}/2;s+1-i,i}^{\theta}|\tilde h_{i}|_{\rho_{\infty}/2;r-i,i}^{1-\theta}
\end{align}
for positive numbers $\theta=\f{r-\ell}{r-s-1}$. This shows that $\tilde H_i^\la $ converges in $\cL C^{\ell,s}$ norm for any $\ell<r$.
That $\tilde H_i^\la $ converges to a $\cL C^{1,0}$ diffeomorphism can be proved in a way similar to the proof of \rp{pnl}.

\medskip

 ({\it ii }\!\!) Assume that $r=\infty>s$.
We first apply the above arguments to  $r=s+3$. This gives us the rapid convergence of $\|\tilde h_i\|_{\rho_i;s+1,s}$. We also apply the above estimates to any finite $r = s+\ell$ for any integer $\ell\geq3$.    Then $\|\tilde h_i\|_{\rho_{\infty}/2;r,s}$ has  linear growth.  By interpolation in $x$ variables as before, we get rapid convergence of $\|\tilde h_i\|_{\rho_{\infty}/2;[s]+\ell-1,s}$ for $\ell=3,4,\dots$.
%
% Assume now that $r=s=\infty$. This case can be treated in case ({\it iv}).
% By interpolation in $t$ variable
%\al
%\label{aimpp}
%|\tilde h_{i}|_{\rho_{\infty}/2;[b]-i,i+\beta}&\leq C_b|\tilde h_{i}|_{\rho_{\infty}/2;[b]-i,i }^{1-\beta/{\{s\}}}|\tilde h_{i}|_{\rho_{\infty}/2;[b]-i,i+\{s\}}^{\beta/{\{s\}}}.
% \end{align}
%We obtain the rapid convergence of $|\tilde h_i|_ {\rho_{\infty}/2;[r]-j,j+\beta}$ for any $\beta<\{s\}$.

We remark that the case $r=s=\infty$ needs to be treated differently as \re{hAbs3} is too weak to conclude that $\|\A_i\|_{\rho_\infty; r,s}$ has  linear growth as we cannot have \re{hyper0} for all $s$. Therefore, we will treat
the case in  ({\it iv}).

\medskip

({\it iii}) Assume that $s$ is finite. Since
$$d_{[s]}(r,[s]+1)>1,$$
we can apply the above arguments in the first case to get rapid convergence of $\tilde h_i$
in $\cL C^{r',[s]}$ norm for any $r'<r$.
Since $\tilde h_i$ has  linear growth
in $\cL C^{r,s}$ norm.  By interpolation in $t$ variable
\al
%\label{aimpp}
\nonumber %
|\tilde h_{i}|_{\rho_{\infty}/2;[r]-i,i+\beta}&\leq C_r|\tilde h_{i}|_{\rho_{\infty}/2;[r]-i,i }^{1-\beta/{\{s\}}}|\tilde h_{i}|_{\rho_{\infty}/2;[r]-i,i+\{s\}}^{\beta/{\{s\}}}.
 \end{align}
We obtain the rapid convergence of $|\tilde h_i|_ {\rho_{\infty}/2;[r]-j,j+\beta}$ for any $\beta<\{s\}$.

\medskip

 ({\it iv }\!\!)  We consider   the    case with $\infty\geq r=s>1$. We will apply smoothing operator
in all variables, including the parameter $\la$. This requires us to re-investigate the use of the homotopy formula. Recall that $D_\rho=B_{\rho}^{2n}\times B_\rho^M\times B_\rho^L$ and $\tilde\rho_i=(1-\theta)^i\rho$. We will assume that $1/4<\rho<4$.
Set $|u|_{\rho;a}=|u|_{D_\rho;a}$ and $|\A|_{\rho;a}=|\A|_a$.

We first assume that $r$ is finite.
We use a smoothing operator $S_\tau$ in $(z,t,\xi,\la)$ variables on $D_\rho\times(-1,2)$. We first apply the Seeley extension operator $E$ in parameter $\la$ and then $S_\tau$ to define
\ga\nonumber %
h_\tau=(f_\tau,g_\tau):=-S_\tau  E \cL T_\rho\A+S_\tau E \cL T_\rho\A(0).
%\label{6p47E}
\end{gather}
On shrinking domains $D_{\tilde\rho_1 }\times[0,1]$, we have
\begin{equation}
[S_\tau,{\mathbf d}]=0, \quad [E,{\mathbf d}]=0.\nonumber
\end{equation}
The vanishing of last commutator follows from the linearity of the Seeley extension    \re{seeley} which is applied in the $\la$ variable.
 We obtain for $0\leq \la\leq1$
\aln %\label{6p50E}
\A+ \mathbf dh_\tau  &=\A -\mathbf dS_\tau E\cL T_\rho\A =\A -S_\tau E\mathbf d\cL T_\rho\A=\A -S_\tau E\A+S_\tau E\cL T_\rho\mathbf d\A\\
&=(I-S_\tau )E\A+S_\tau E\cL T_\rho\mathbf d\A.
\nonumber\end{align*}
Here the  last identity is obtained by the homotopy formula and   $E^\la u^\la =u^\la $ for $0\leq \la\leq1$.
We  now express
\al \label{r6p50+m}%\nonumber %
(\cL C\widehat A )\circ H
= \A + \mathbf dh_\tau+\langle\A, \pd h_\tau\rangle=I_1+I_2+I_3
\end{align}
where $\cL C\widehat{\A}=(C\widehat{\B},\hat{\mathfrak b})$ and $I_i$ are now given by
\ga
%\label{r6p51m}
\nonumber %
I_1=(I-S_\tau )E\A,\quad   I_2=S_\tau
E\cL T_\rho{\mathbf d}\A,\quad  I_3= \langle\A , \pd  S_\tau E\cL T_\rho\A\rangle,\\
C\circ H-I=I_4, \quad I_4=\langle \A,\pd S_\tau E\cL T_\rho\A\rangle.\nonumber
\end{gather}
Here $
I_{4,\ov\all\beta}=-(A_{\ov\all \gaa}\pd_\gaa  +
B_{\ov\all \ell}\pd_{\ell}'')S_\tau E(\cL T_\rho\A)_\beta$.   Each $I_i$ contains the extension and smoothing operators.   By \re{6p54}, the integrability condition has the form
\ga\nonumber %
\mathbf d\A^\la =\langle\psi(\A^\la )\A^\la ,\pd\A^\la \rangle, \quad 0\leq \la\leq1.
\label{r6p54m}\end{gather}
%\ga
%\label{r6p51m}
%I_1=(I-S_\tau )\A,\quad   I_2=S_\tau
%P_\rho{\mathbf d}\A,\quad  I_3= \langle\A, D S_\tau P_\rho\A\rangle,\\
%I_4:=C\circ H-I, \quad
%I_4=\langle\A, D S_\tau P_\rho\A\rangle, \\
%\mathbf d\A=\langle\psi(\A)\A,\pd\A\rangle.
%\label{r6p54m}\end{gather}
%The estimates for
% $I_4$ and $I_3$ will be identical.
We will use
 \eq{ri4321} \nonumber
 \widehat{\A}\circ H=(I+I_4)^{-1}(I_1+I_2+I_3).\eeq
 From \re{seeley}, we know that the extension $E$ does not depend on the domain $D_\rho$. Thus we have
 $
 |Eu|_{r}\leq C_{r}|u|_{\rho;r},
 $
 where $|Eu|_{r}$ is computed on $\rr^N\times(-1,2)$ and $C_{r}$ does not depend on shrinking domains. Although the extension operator does not preserve  the integrability condition when $\la$ is outside $[0,1]$, we still have
 \eq{}
  |EP_\rho\mathbf d\A|_{\tilde\rho_1; a}\leq C_{ a}|P_\rho\mathbf d\A|_{\tilde \rho_1;a}\leq C_a(\rho\theta)^{- a}|\langle\psi(\A^\la )\A^\la ,\pd\A^\la \rangle|_{ \rho;a}.\nonumber
 \eeq
 This suffices our proof. The reader is also referred to Nijenhuis and Woolf~\ci{NW63} where integrability condition is relaxed to
 differential inequalities.

\medskip

  The rest of the proof is much simpler. We can  simplify the iteration procedure. In a simpler way
  we will   establish  the rapid convergence in the   $\cL C^0$ norms. This will avoid loss of regularity.

 %Our aim is to get a rapid convergence in $C^0$ norms and use the interpolation before we get the rapid convergence.
Set $|\cL A|_{r}=|\cL A|_{\rho;r}$.  Let us start with
 \eq{}
|I_1|_{\tilde\rho_2;0}\ple \tau^{r}|\cL A|_{r}, \quad |I_1|_{\tilde\rho_2;r}\ple|\A|_{r}.\nonumber
\eeq
Applying estimates \re{3p13} and \re{3p70} for the homotopy formulae,
 we get
\al\nonumber %
%\label{7p2m}
|I_2|_{\tilde\rho_3;0}&= |S_\tau E
\cL T_\rho{\mathbf d}\A|_{\tilde\rho_3;0}\leq K(1)
|\A|_{1}|\A|_{0}\leq K(1)|\A|_0^{2-\f{1}{r}}|\A|_r^{\f{1}{r}},
\end{align}
where the last inequality is obtained by interpolation \re{1-theta}. We also have
\al\nonumber %
%\label{7p2m}
|I_2|_{\tilde\rho_3;r}&\ple\tau^{-1}|
E\cL T_\rho{\mathbf d}\A|_{\tilde\rho_2;r-1}\leq K(r)\tau^{-1}(
|\A|_{1}|\A|_{r-1}+|\A|_r|\A|_0)\\
&\leq K(r)\tau^{-1}|\A|_r|\A|_0.\nonumber
\end{align}
By \re{sftaj} and \re{stfj+},  we   have
\aln
|DS_\tau E\cL T_\rho\cL A|_{\tilde\rho_3;0}&\leq|S_\tau E\cL T_\rho\cL A|_{\tilde\rho_3;1}\leq C\tau^{-1}|E\cL T_\rho\cL A|_{\tilde\rho_2;0}\leq
 K(0)\tau^{-1}|\cL A|_{0},\\
|DS_\tau E\cL T_\rho\cL A|_{\tilde\rho_3;r}&\leq |S_\tau E\cL T_\rho\cL A|_{\tilde\rho_3;r+1}\leq K(r)\tau^{-1}|  \A|_{r}.\end{align*}
Thus by \re{estprod}, we get for $i=3,4$
\al
%\label{Ii10}
\nonumber %
|I_i|_{\tilde\rho_3;0}&\leq C|\langle\A, D S_\tau E \cL T_\rho\A\rangle
|_{\tilde\rho_3;0}\leq K(1)\tau^{-1}|\A|_{0}^2,\\
|I_i|_{\tilde\rho_3;r}&\leq C_r|\langle\A, D S_\tau E \cL T_\rho\A\rangle
|_{\tilde\rho_3;r}\leq K(r)\tau^{-1}|\A|_r|\A|_0.\nonumber
 \end{align}
Therefore,
\ga \label{chata}
|(C\widehat{\A})\circ H|_{\tilde\rho_3;0}\leq
K(r)\left(\tau^{r}|\A|_{r}+|\A|_0^{2-\f{1}{r}}|\A|_r^{\f{1}{r}}+
\tau^{-1}|\A|_{0}^2\right),\\
|(C\widehat{\A})\circ H|_{\tilde\rho_3;r}\leq K(r)\left(
 |\A|_r+\tau^{-1}|\A|_{0}|\A|_r\right).\nonumber
\end{gather}

Assume that  $r>1$. We first require that $|I_i|_{\tilde\rho_3;0}\leq 1/{C_N}$. Thus we assume that
\eq{A0Ar}
\tau^{-1}|\A|_{0}+ |\A|_0^{2-\f{1}{r}}|\A|_r^{\f{1}{r}}\leq 1/{(C_NK(1))}.
\eeq
By the product rule in \rl{Aff-},  \re{chata}, and above estimates for $I_i$, we get
\begin{gather}\label{hAh1}
|\widehat{\A} \circ H|_{\tilde\rho_3;0}\leq K(r)\left(\tau^{r}|\A|_{r}+|\A|_0^{2-\f{1}{r}}|\A|_r^{\f{1}{r}}+
\tau^{-1}|\A|_{0}^2\right),\\
|\widehat{\A} \circ H |_{\tilde\rho_3;r}\leq K(r)(|\A|_r+ \tau^{-1}|\A|_{0}|\A|_r)\leq 2K(r)|\A|_r.\nonumber
\end{gather}
 Thus $h=H-I$ satisfies
\ga\label{hrh1m}
|h|_{\tilde\rho_2;m}=|S_\tau \cL T_\rho\A|_{\tilde\rho_2;m}\leq K(m)|\A|_{\rho;m}.
\end{gather}
Interpolating again, we have $|\A|_{1}\leq C_r|\A|_0^{1-\f{1}{r}}|\A|_r^{\f{1}{r}}$. Assume now that
\eq{876n}
|\A|_0^{1-\f{1}{r}}|\A|_r^{\f{1}{r}} \leq K_0^{-1}(1)\theta/{C_{N}}
\eeq
with $K_0(1)\geq K(1)$.
Then $|h|_{\tilde\rho_2;1}\leq \theta/{C_N}$, and by \re{3p6} we obtain
\al\nonumber
|H^{-1}-I|_{\tilde\rho_3;1}&\leq 2|H-I|_{\tilde\rho_2;1}\leq K(1)|\A|_0^{1-\f{1}{r}}|\A|^{\f{1}{r}}_r
 \leq 1/2.
\end{align}

By \re{hAAf} and $H^{-1}\colon D_{\tilde\rho_4}\to D_{\tilde\rho_3}$,  we have
\eq{hAAh}
|\widehat\A|_{\tilde\rho_4;r}\ple|\widehat\A\circ H|_{\tilde\rho_3;r}+|\widehat\A\circ H|_{\tilde\rho_3;1}|h|_{\tilde\rho_3;r}\leq K(r)|\A|_{r}.
\eeq
By \re{hAh1}, we have
\eq{hAAh+}
|\widehat{\A}  |_{\tilde\rho_4;0}\leq K(r)\left(\tau^{r}|\A|_{r}+|\A|_0^{2-\f{1}{r}}|\A|_r^{\f{1}{r}}+
\tau^{-1}|\A|_{0}^2\right).
\eeq

The rest of proof is analogous to the previous cases. We will be brief. Let
 \ga\nonumber
\rho_i=\frac{1}{2}+\frac{1}{2(i+1)}, \quad
\rho_{i+1}=(1-\theta_i)^4\rho_i.
\end{gather}
Thus, we take $1-\theta_k^*=(1-\theta_k)^4$ in \rl{Ask2k}.
 We return to the sequence $H_i$ which needs to satisfy \re{nestH}. Let $S_i^\la $ be the sequence of the structures and let $\A_i$ be the adapted $1$-forms of $S_i^\la $. Suppose that \re{A0Ar} and \re{876n} hold for $\A_i$, i.e.
\ga\label{A0Ari}
K_0(1)\tau_i^{-1}|\A_i|_{\rho_i;0}\leq 1/{C_N},\quad |\A_i|_{\rho_i;0}^{1-\f{1}{r}}|\A_i|_{\rho_i;r}^{\f{1}{r}} \leq K_0^{-1}(1)\theta_i/{C_N}.
%,\\
% \label{A0Ari+}
% K(1)|\A_i|_{\rho_i;0}^{1-\f{1}{r}}\leq \yt \rho_i\theta_i, \quad K(r)\tau_i^{r-1}|\A_i|_{\rho_i;r}\leq \yt \rho_i\theta_i.
\end{gather}
Then we have by  \re{hAAh}-\re{hAAh+}
\ga\label{Ai+1}
|\A_{i+1}|_{\rho_{i+1};r}\leq K(r)  |\A_i|_{\rho_{i};r},\\
| \A_{i+1}|_{\rho_{i+1};0}\leq K(r)\left(\tau_i^{r}|\A|_{\rho_{i};r}+|\A_i|_{\rho_{i};0}^{2-\f{1}{r}}|\A_i|_{\rho_{i};r}^{\f{1}{r}}+
\tau_i^{-1}|\A_i|_{\rho_{i};0}^2\right).
\label{Ai+1+}\end{gather}
  We apply \rp{NashMoser} to $a_i:=|\A_i|_{\rho_i;0}$ and $L_i:=|\A_i|_{\rho_i;r}$.  By \re{ajLj0} we get
 \eq{aiLi}
 a_i\leq a_i^*:=\tau_i^{\kappa_*}, \quad L_i\leq L_i^*:= 2^iL_0e^{P_1(i)},
  \eeq
 provided $\tau_0\leq\hat\tau_0$. Here $\kappa_*>1$ and it depends only on $r$. Recall from \rp{NashMoser} that $\tau_{i+1}=\tau_i^{d_*}$ for some $d_*>1$. We may choose $\tau_0$ so small that
 \re{aiLi} implies  \re{taurho+} and \re{A0Ari}.   Therefore, using the initial normalization and dilation we first obtain \re{A0Ari}  for $i=0$. Then we have \re{Ai+1}-\re{Ai+1+} for $i=0$. By \rp{NashMoser}, we have \re{aiLi} for $i=0,1$ and hence \re{A0Ari} for $i=0,1$. %The latter yields \re{A0Ari} for $i=0,1$.
 We repeat this procedure to get \re{A0Ari}-\re{Ai+1+} for all $i$. Thus each $H_i$ satisfies \re{hrh1m}, which now has the form
 \eq{him}
 |h_i|_{(1-\theta_i)^2\rho_i;m} \leq K(m)|\A_i|_{\rho_i;m},\quad m\leq r.
 \eeq
By \re{aiLi} and interpolation, we get rapid convergence of $|\A_i|_{\rho_i;m}$ for any $m<r$. Hence we have rapid convergence of $|\tilde h_i|_{\rho_\infty/2;m}$ for any $m<r$.  This proves the result when $r$ is finite.

When $r=\infty$, we apply the above argument to $r=2$ to obtain the rapid convergence of $|\tilde h_i|_{\rho_\infty/2;a}$ for any $a<2$. We still have \re{Ai+1} and \re{him} for any finite $r$, which are
 obtain via \re{hAAh}-\re{hAAh+}. Thus, $\tilde h_i$ has  linear growth in $\cL C^r$ norm for any finite $r$. By interpolation, we obtain the rapid convergence of $\tilde h_i$ in $\cL C^m$ norm for any $m<r$.

  ({\it v}) In this case, we will applying smoothing operator to the variables $x\in\rr^N$ only. However, the argument
  is identical to the proof of  ({\it  iv}).
\end{proof}

\appendix

\setcounter{thm}{0}\setcounter{equation}{0}
\section{H\"older norms for functions  with parameter
}\label{sec3+}

The main purpose of the appendix is to derive some interpolation properties for H\"older norms defined in domains depending on a parameter.
The interpolation properties were derived in H\"ormander~\ci{Ho76} and Gong-Webster~\ci{GW11}.

  We say that a domain $D$ in $\rr^m$ has the
{\it cone  property} if the following
hold:
(i)
 Given two points $p_0,p_1$ in $D$ there exists
a piecewise  $\cL C^1$   curve $\gamma(t)$ in $D$
such that $\gamma(0)=p_0$ and $\gamma(1)=p_1$,
$|\gamma'(t)|\leq   C_*|p_1-p_0|$ for all $t$
except  finitely many   values. The diameter of $D$ is
less than   $C_*$.
(ii)
For each point $x\in \ov D$, $D$ contains a cone $V$  with vertex $x$,
opening $\theta>C_*^{-1}$ and height $h>C_*^{-1}$.
We will denote by $C_*(D)$ a  constant $C_*>1$  satisfying
 (i) and (ii). A constant $C_a(D)$ may also depend on $C_*(D)$.
 In our applications,  we will apply the inequalities to domains that are products of balls of which the radii are between two fixed numbers. Therefore,  $C_*(D)$ and $C_a(D)$ do not depend on $D$, which will be assumed in the appendix.

If $D$ is a domain of \ps, then the norms on $\ov D$ satisfy
\ga
|u|_{D;(1-\theta)a+\theta b}\leq
C_{a,b}  |u|_{D;a}^{1-\theta}|u|_{D;b}^{\theta },\label{1-theta}
\\
|f_1|_{a_1+b_1}|f_2|_{a_2+b_2}\leq C_{a,b} (|f_1|_{a_1+b_1+b_2}|f_2|_{a_2}+|f_1|_{a_1}|f_2|_{a_2+b_1+b_2}).\label{f1f2}\nonumber %
\end{gather}
Here $|f_i|_{a_i}=|f_i|_{D_i;a}$. %, and  domains $D_i$ have the cone property.
Throughout the appendix, we always assume that domains $D,D',D_i$ have the cone property.

Let $|u_i|_{a_i}=|u_i|_{D_i;a_i}$. Then we have
\ga
\label{estprod}
\prod_{j=1}^m|u_j|_{d_j+a_j}\leq C^m_{a}\sum_{j=1}^m|u_j|_{d_j+a_1+\dots+a_m}\prod_{i\neq j}|u_i|_{d_i}.
\end{gather}

Let  $f=\{f^\la \}$, $g=\{g^\la \}$ be two families of functions on $\ov D$ and $\ov{D'}$, respectively.
To deal with two H\"older exponents in $x,t$ variables, we introduce notation
\ga \nonumber %
|f|_{D;a,0}\odot |g|_{D';0,b}:=|f|_{D;a,0}|g|_{D';0,[b]}+
 |f|_{D;[a],0}|g|_{D';0,b},\\  \label{qrs} \nonumber %
Q^*_{D,D';a,b}(f,g) := |f|_{D;a,0}\odot |g|_{D';0,b}+ |f|_{D;0,b}\odot|g|_{D';a,0}, \\
\label{qrsfg}
Q_{D,D';r,s}(f,g):=\sum_{j=0}^{\mfl{s}}Q^*_{D,D';r-j,j+\{s\}}(f,g),\quad [r]\geq [s],\\
\hat Q_{D,D';r,s}(f,g):=\|f\|_{D;r,s}|g|_{D';0,0}+
|f|_{D;0,0}\|g\|_{D';r,s}+Q_{D,D';r,s}(f,g).  \nonumber %
%+|f|_{r,0}|g|_{0,s}+|f|_{s,0}|g|_{0,r}.
\end{gather}
For simplicity, the dependence of $Q,Q^*,\hat Q$ on domains $D,D'$ is not indicated when it is clear from the context.

Throughout the paper, by $A\ple B$ we mean that $A\leq CB$ for some constant $C$.
\le{ff-} Let $r_i,s_i, a_i,b_i\  ( 1\leq i\leq m)$ be non-negative real numbers.
Assume that   $(r_1,\dots,r_m, a_1,\dots, a_m)\in\nn^{2m}$, or $(s_1,\dots,s_m, b_1, \dots, b_m)\in\nn^{2m}$.
Let $D_i$  be a  domain in $\rr^{n_i}$ with the cone property and let $f_i=\{f_i^\la \}$ be a family of functions on $D_i$. Let $|f_i|_{c,d}=|f_i|_{D_i;c,d}$. Assume that $\infty>m\geq2$. Then
\ga
\label{ff-a}
\prod_{i=1}^m|f_i|_{r_i,s_i}  %|f_1|_{r_1,s_1}\cdots |f_m|_{r_m,s_m}
\leq C^m_{r+s}\left\{\sum_i|f_i|_{r,s}\prod_{\ell\neq i}|f_\ell|_{0,0}
+ \sum_{ i\neq j}Q^*_{ r,s}(f_i,f_j)\prod_{l\neq i,j}|f_\ell|_{0,0}
\right\},\\
\label{ff-b}
\prod_{i=1}^m|f_i|_{r_i+a_i,s_i+b_i}
%|f_1|_{r_1+a_1,s_1+b_1}\cdots |f_m|_{r_m+a_m,s_m+b_m}
\leq C^m_{r+s+a+b}\left\{\sum_i|f_i|_{r_i+a,s_i+b}\prod_{\ell\neq i}|f_\ell|_{r_\ell,s_\ell}\right.
\\
\hspace{8em}
+ \left.\sum_{ i\neq j}|f_i|_{r_i+a,s_i}|f_j|_{r_j,s_j+b} \prod_{\ell\neq i,j}|f_\ell|_{r_\ell,s_\ell}
\right\}.\nonumber
\end{gather}
Here   $a=\sum a_i$,  $b=\sum b_i$, etc..
Assume further that $r_i\geq s_i$ and $a_i=b_i=0$ for all $i$. Then
\ga\label{ff-c}
\|f_1\|_{r_1,s_1}\cdots \|f_m\|_{r_m,s_m}\leq C^m_{r,s}\sum_{i\neq j} \hat Q_{ r,s}(f_i,f_j)\prod_{\ell\neq i,j}|f_\ell|_{0,0}.
\end{gather}
\ele
\begin{proof} Let  $k_i,j_i\in\nn$.
Let $\pd^{k_i}_{x_i}\pd^{j_i}_{\la_i}f_i$  denote a partial derivative of $f^\la (x)$ of  order $k_i$ in $x$ and order $j_i$ in $\la$, evaluated at $x=x_i$ and
$\la=\la_i$.
  Let $k=\sum k_i$ and $j=\sum j_i$.

We will prove the inequality by estimating derivatives pointwise and H\"older ratios of derivatives.
  We apply \re{estprod} for domains $D_i$ to obtain
\al\label{pleb}
\prod |\pd^{k_i}_{x_i}\pd^{j_i}_{\la_i} f_i|\ple\prod|\pd_ {\la_i}^{j_i}f_{i}|_{k_i}\ple\sum|\pd_ {\la_i}^{j_i}f_{i}|_{k}\prod_{\ell\neq i}|\pd_ {\la_\ell}^{j_\ell}f_{\ell}|_{0}.
\end{align}
Estimate each term via pointwise derivatives, we can write
\eq{pleb1}
|\pd_ {\la_i}^{j_i}f_{i}|_{k}\prod_{\ell\neq i}|\pd_ {\la_\ell}^{j_\ell}f_{\ell}|_{0}=|\pd_{x_i}^{k'}\pd_ {\la_i}^{j_i}f_{i}|\prod_{\ell\neq i}|\pd_ {\la_\ell}^{j_\ell}f _{\ell}(x_\ell)|
\eeq
for some partial derivative $\pd^{k'}$ with order $k'\leq k$.
While applying \re{estprod} for the domain $I$, we  see the last term is bounded by the right-hand side of \re{ff-a}. Thus, we have verified \re{ff-a} when $r_i,s_j$ are integers.

Let $\beta_i=\{r_i\}$ and $k_i=[r_i]$, or $\beta_i=0$ and $k_i\leq[r_i]$. We estimate the product of H\"older ratios
\al %\label{bi0}
\nonumber %
&\prod_{\beta_i>0} \f{|\pd_{y_i}^{k_i}\pd_{\la_i}^{j_i}f_{i}- \pd_{x_i}^{k_i}\pd_{\la_i}^{j_i}f_{i}|}{|{y_i}-{x_i}|^{\beta_i}}
\times\prod_{\beta_i=0}|\pd_{x_i}^{k_i}\pd_ {\la_i}^{j_i}f_{i}|
\ple\sum |\pd_{\la_i}^{j_i} f_{i}|_{r}\prod_{\ell\neq i} |\pd_{\la_\ell}^{j_\ell} f_{\ell}|_{0}.
\nonumber\end{align}
We estimate each term in the last sum by considering pointwise derivatives and H\"older ratios. The former can be estimated by computation similar to \re{pleb}-\re{pleb1}. For the H\"older ratio,
applying \re{estprod}   for  $I$  we can bound
$$
\f{|\pd_{y_i}^{k}\pd_{\la_i}^{j_i}f_{i}- \pd_{x_i}^{k}\pd_{\la_i}^{j_i}f_{i}|}{|{y_i}-{x_i}|^{\beta}}
\times\prod_{\ell\neq i}| \pd_ {\la_\ell}^{j_\ell}f_{\ell}|_0
$$
by the right-hand side of \re{ff-a}.

To verify \re{ff-b} it suffices to  consider the   case with $(s_1,\dots, s_m, b_1,\dots, b_m)\in\nn^{2m}$.
Furthermore, it is easy to reduce it to the case with $s_i=0$, which we now assume.
Let $k_i\leq r_i+a_i$ and $j_i\leq b_i$. We apply \re{estprod} to domains $D_i$ to obtain
\al\label{pleb2}
\prod |\pd^{k_i}_{x_i}\pd^{j_i}_{\la_i}  f_i|
\ple\prod_{\ell|k_\ell\leq r_\ell}
|\pd_ {\la_\ell}^{j_\ell}f_{\ell}|_{k_\ell}\sum_{i |k_i>r_i}
 |\pd_ {\la_i}^{j_i}f_{i}|_{k_i+a'}\prod_{\ell|\ell\neq i,k_\ell>r_\ell}|\pd_ {\la_\ell}^{j_\ell}f_{\ell}|_{a_\ell}
\end{align}
with $a'=\sum_{\ell|k_\ell>a_\ell} (k_\ell-r_\ell)\leq a$.
Again we estimate each term via pointwise derivative. We can write
$$
\Bigl\{\prod_{\ell|k_\ell\leq r_\ell}
|\pd_ {\la_\ell}^{j_\ell}f_{\ell}|_{k_\ell}\Bigr\}|\pd_ {\la_i}^{j_i}f_{i}|_{[k_i+a']}\prod_{\ell|\ell\neq i,k_\ell>a_\ell}|\pd_ {\la_\ell}^{j_\ell}f_{\ell}|_{[a_\ell]}=
|\pd_{x_i}^{k'}\pd_ {\la_i}^{j_i}f_{i}|\prod_{\ell\neq i}|\pd_{x_\ell}^{k_\ell}
\pd_ {\la_\ell}^{j_\ell}f_{\ell}|.
$$
While applying \re{estprod} for the domain $I$, we  see that the last term is bounded by the right-hand side of \re{ff-b}.
For H\"older ratios on the right-hand side of \re{pleb2}, let $\beta_i=\{a_i+r'\}$ and $k_i=[a_i+r']$,  or $\beta_i=0$
and $k_i\leq [a_i+r']$; and  for $\ell\neq i$, let $\beta_\ell=\{a_\ell\}$ or $\beta_\ell=0$ and let $k_\ell\leq[a_\ell]$.
   We estimate the product of H\"older ratios
\al\label{bi0}\nonumber %
&\prod_{i|\beta_i>0} \f{|\pd_{y_i}^{k_i}\pd_{\la_i}^{j_i}f_{i}- \pd_{x_i}^{k_i}\pd_{\la_i}^{j_i}f_{i}|}{|{y_i}-{x_i}|^{\beta_i}}
\times\prod_{i|\beta_i=0}|\pd_{x_i}^{k_i}\pd_ {\la_i}^{j_i}f_{i}|
\ple\sum |\pd_{\la_i}^{j_i} f_{i}|_{r}\prod_{\ell\neq i} |\pd_{\la_\ell}^{j_\ell} f_{\ell}|_{0}.
\end{align}
We need to estimate each term in the last sum by considering pointwise derivatives and H\"older ratios.
Applying \re{estprod} to $D_i$, we can bound  $|\pd_{\la_i}^{j_i} f_{i}|_{[r]}\prod_{\ell\neq i} |\pd_{\la_\ell}^{j_\ell} f_{\ell}|_{0}$ by the right-hand side of \re{ff-b}. Let $\beta=\{r\}$ and $k=[r]$.  Applying \re{estprod} to the interval  $I$, we can bound
$$
\f{|\pd_{y_i}^{k}\pd_{\la_i}^{j_i}f_{i}- \pd_{x_i}^{k}\pd_{\la_i}^{j_i}f_{i}|}{|{y_i}-{x_i}|^{\beta}}
\times\prod_{\ell\neq i}| \pd_ {\la_\ell}^{j_\ell}f_{\ell}|_0
$$
  by the right-hand side of \re{ff-b}. We have verified \re{ff-b} and hence \re{ff-c}.
\end{proof}
By the product rule and \rl{ff-}, we obtain the following.
\le{ff} Let $D$  be a domain in $\rr^n$ with the cone property.
Assume that $1\leq m'<m<\infty$. With all norms on $D$, we have
\begin{align} \nonumber %
%\label{3p2}
&|\{f^\la _1\cdots f^\la _m\}|_{r,s}+
Q^*_{r,s}\bigl(\{\prod_{i\leq m'}f^\la _i\},\{\prod_{i>m'}f^\la _i\}\bigr)\\ %\leq C_{r,s}\tilde Q^*_{r,s}(f_1,\dots, f_m),\\
%|f_1\cdots f_m|_{r,s}\leq C_{r,s}\left\{\sum_i|f_i|_{r,s}\prod_{k\neq i}|f_k|_{0,0}
%+\sum_{i\neq j} Q_{r,s}(f_i,f_j)\prod_{k\neq i,j}|f_k|_{0,0}\right\},\label{3p2+}\\
%\tilde Q^*_{r,s}(f_1,\dots, f_m):=
 &\qquad\leq C^m_{r,s}\left\{\sum_i|f_i|_{r,s}\prod_{k\neq i}|f_k|_{0,0}
+  \sum_{i\neq j} Q^*_{r,s}(f_i,f_j)\prod_{k\neq i,j}|f_k|_{D;0,0}\right\}.
\nonumber %\label{qrsff}
\end{align}
\ele
%We remark that the $Q_{r,s}^0, Q_{r,s}$ in \re{2p2}-\re{3p2+} can be discarded when $s=0$.
%\begin{proof} Let $k=\mfl{r}$, $j=\mfl{s}$, $\all=r-\mfl{r}$, and $\beta=s-\mfl{s}$.
%A mixed derivative of $f_1\cdots f_m$
%of order at most $k$ in $x, \la$ and of order at most $j$ in $ t$ is a sum of terms
%\eq{}\nonumber
%p(x,t)=\pd_0^{k_1}\pd_ t^{j_1}f_{1}\cdots \pd_0^{k_m}\pd_ t^{j_m}f_{m},\quad \sum(k_i+j_i)\leq k-j,
%\quad j':=\sum j_i\leq j.
%\eeq
%\end{proof}

Let us use Lemmas~\ref{ff-} and \ref{ff} to verify the following.
\pr{phifi} Let $r,s\in[0,\infty)$. Let $\phi_i$ be $\cL C^{r+s+1}$ functions in $[a_i,b_i]$. Let $D$ be as in \rla{ff}. Let $f_i\in C_*^{r,s}(\ov D)$ and $f_i^\la (D)\subset[a_i,b_i]$. Suppose that $0\in[a_i,b_i]$ and $\phi_i(0)=0$. Let $1\leq m'<m$.
With all norms on $D$, we have
\begin{align} %\label{phi1f1a}
& |\phi_1(f_1)|_{r,s}\leq C_{r,s}(|f_1|_{r,s}+|f_1|_{r,0}\odot|f_1|_{0,s}),
%\nonumber
 \\
\label{phi1f1}
&|\phi_1(f_1)\cdots \phi_m(f_m)|_{r,s}+
Q^*_{r,s}(\prod_{i\leq m'}\phi_i(f_i),\prod_{i>m'}\phi_i(f_i))\\
& \qquad\leq C^m_{r,s}\left\{ \sum_i|f_i|_{r,s}\prod_{k\neq i}|f_k|_{0,0}
+\sum_{i\leq j} Q^*_{r,s}(f_i,f_j)\prod_{k\neq i,j}|f_k|_{0,0}\right\}.\nonumber
\end{align}
where   $C_{r,s}$ also depends on $|\phi_i|_{[a_i,b_i];[r]+[s]+1}$.
\epr
\begin{proof}Since $\phi_i\in\cL C^1$ and $\phi_i(0)=0$, we get $|\phi_i(f_i)|_{D;\all,\beta}\ple|f_i|_{\all,\beta}$ for $\all,\beta\in[0,1]$. Applying the chain rule and \rl{ff-}, we get
\aln
|\phi_i(f_i)|_{r,s}&\ple\sum_{r_1+\dots+ r_m=r,s_1+\dots+s_m=b}\prod|f_i|_{r_\ell,s_\ell}
\ple |f_i|_{r,s}+|f_i|_{r,0}\odot|f_i|_{0,s}
\end{align*}
with all $r_i,s_j$ are integers, except for possible one. We can verify \re{phi1f1} similarly by using \rl{ff-}.
\end{proof}

When applying the chain rule, we need to count derivatives efficiently.
\begin{defn}\label{defbiord}  Let $k\geq1$ and $k\geq j\geq0$.
 Let $F^\la $ be a family of mappings from $D$ to $D'$.  Let $\{u^\la \}$ be a family of functions on $D$
 and   let $\{f_i^\la \}, \dots, \{f_m^\la \}$  be families of functions on $D'$.  Define $\cL P_{k,j}(\{u^\la \circ F^\la \}; \{f^\la \})$ to
  be the linear space spanned by functions of the form
\eq{mfactor}
\{\pd^{K_0}\pd_\la^{j_0}u^\la \}\circ F^\la ,\quad \{\pd^{K_0}\pd_\la^{j_0}u^\la \}\circ F^\la \prod_{1\leq\ell\leq l }\pd^{K_\ell}\pd_\la^{j_\ell}f_{n_\ell}^\la , \quad 1\leq l<\infty
\eeq
with
\ga\label{effect}
|K_\ell|+j_\ell\geq1, \quad\ell\geq0;\qquad \sum_{\ell\geq0} j_\ell\leq j, \\
\label{vtx+}|K_0|+j_0+\sum_{\ell\geq1}(|K_\ell|+j_\ell-1)\leq k, \quad \ i.e. \quad \ \sum_{\ell\geq0}(|K_\ell|+j_\ell-1)<k.
\end{gather}
Analogously, define $\cL P_{k,j}(\{f^\la \})$ to be the linear space spanned by
\eq{mfactor+}
 \prod_{1\leq\ell\leq l }\pd^{K_\ell}\pd_\la^{j_\ell}f_{n_\ell}^\la , \quad1\leq  l<\infty
\eeq
with $|K_\ell|+j_\ell\geq1$,   $\sum j_\ell\leq j$, $\sum(|K_\ell|+j_\ell-1)<k$.
\end{defn}
By counting efficiently, we count one less for the order of derivative $\pd^{K_\ell}\pd_\la^{j_\ell}f_{n_\ell}^\la $. Also the $l$  in \re{mfactor} and \re{mfactor+} will have an upper bound depending on $k,j$.

 It is easy to see that if $\{a^\la \}\in\cL P_{k_0,j_0}(\{f^\la \})$ and $\{v^\la \}\in\cL P_{k_1,j_1}(\{u^\la \circ F^\la \};\{f^\la \})$,
 with $k_0\geq1$, $k_1\geq1$,  then
$
\{a^\la v^\la \}\in\cL P_{k_0+k_1-1,j_0+j_1}(\{u^\la \circ F^\la \};\{f^\la \})$ by \re{vtx+}.
We express it as
\eq{addp}
\cL P_{k_0,j_0}(\{f^\la \})\times \cL P_{k_1,j_1}(\{u^\la \circ F^\la \};\{f^\la \})\subset\cL P_{k_0+k_1-1,j_0+j_1}(\{u^\la \circ F^\la \};\{f^\la \})
\eeq
for $k_0,k_1\geq1$.
Also, if $F^\la =I+f^\la $, then
$$
\cL P_{k,j}(\{u^\la \circ F^\la \};\{F^\la \})=\cL P_{k,j}(\{u^\la \circ F^\la \};\{f^\la \}).
$$

\le{chainord}If $1\leq |K|+j\leq k$ then
 $
\pd^K\pd_\la^j\{u^\la \circ F^\la \}\in \cL P_{k,j}(\{u^\la \circ F^\la \};\{F^\la \})$.
\ele
\begin{proof}Let $y=F^\la (x)$. Let $\pd^i$ be a derivative of order $i$ in $x$. We use the chain rule.
 Applying $\pd$ or $\pd_\la$ one by one, we can verify that
$\pd^{k-j}\pd_\la^j(u^\la \circ G^\la )$ is  a linear combination of functions
\ga\label{vtx}
 (\pd_ \la^{j_0} \pd^{K_0} u^ \la)\circ G^\la  \prod_{1\leq i\leq m',j_i>0}
\pd_ \la^{j_i}\pd^{K_i} G_{n_i}^\la \prod_{m'<i\leq |K_0|,|K_i|>0}
 \pd^{K_i} G_{n_i}^\la .
\end{gather}
Here  $\sum_{\ell\geq0} j_\ell=j$, and $j+\sum_{\ell\geq1}|K_\ell|=k$.  When $|K_0|=0$, it is clear that \re{effect}-\re{vtx+} hold.  When $|K_0|>0$, we have $\sum_{\ell\geq0}(j_\ell+|K_\ell|-1)= k-|K_0|<k$.
Thus \re{vtx} is in $\cL P_{k,j}(\{u^\la \circ F^\la \};\{F^\la \})$.\end{proof}
Define
\aln
\tilde Q_{r,s}(\pd f,  \pd g)&:=Q_{r-1,s}(\pd f,\pd g),\\
 \tilde Q_{r,s}(\pd f,  \tilde\pd g)&:=\tilde Q_{r,s}(\pd f,\pd g)
+Q_{r-1,s-1}(\pd f,\pd_\la g):=\tilde Q_{r,s}(\tilde\pd g,\pd f), \\
\tilde Q_{r,s}(\tilde\pd f,\tilde\pd g)&=\tilde Q_{r,s}(\pd f,\tilde\pd g) +Q_{r-1,s-1}(\pd_\la f,\pd g) +Q_{r-1,s-2}(\pd_\la f,\pd_\la g),
\end{align*}
where $Q_{r-1,s}=0$ for $r<s+1$, $Q_{r-1,s-1}=0$ for $s<1$, and $Q_{r-1,s-2}=0$ for $s<2$.

\le{ff+}
Let $r,s$ satisfy \rea{rsrs}.   Let $D, D'$  be   bounded domains in $\rr^m,\rr^n$ respectively, which have \ps.
 Let $ G^\la  \colon D\to D'$ be of class $\cL C^{r,s}(\ov D)$.
 \bppp
 \item Assume that $|G|_{D;1,0}<2$.
 Then
\al \label{3p3-i}
|\{u^ \la\circ G^ \la\}|_{D;\all,0}&\leq C|u|_{D';\all,0},
\quad \ 0\leq\all<1,\\
\label{3p3-+i}|\{u^ \la\circ G^ \la\}|_{D;r,0}&\leq  C_{r}
\Bigl(
 |u|_{D';r,0} +|\pd u|_{D';0,0} |\pd G|_{D;r-1,0}\Bigr), \quad r\geq1.
\end{align}
\item
 Assume that $|G|_{D;1,0}\leq 2$ and $s\geq1$.
 Then
\al\label{3p3--}\nonumber %
%&|\{f^ t\circ g^ t\}|_{\all,\beta}\leq C|f|_{\all,\beta},\ 0\leq\beta\leq\all\leq1,\\
\|\{u^ \la\circ G^ \la \}\|_{D;r,s}& \leq  C_{r}
 \left\{ \|u\|_{D';r,s}+\tilde Q_{D',D;r,s}(\pd u,  \tilde\pd G) \right.\\
 &\quad+\left.|u|_{D';1,0}\bigl(|G|_{D;r,s}
+ \tilde Q_{D',D';r,s}(\tilde\pd G,  \tilde\pd G)\bigr)\right\}.\nonumber
\end{align}
\eppp
\ele
\begin{proof} (i). Inequality \re{3p3-i} is immediate and \re{3p3-+i} is in \ci{Ho76} and \ci{GW11}.
%The estimate in a) follow directly from \re{}.

 (ii).    Let $\all=\{r\}$ and $\beta=\{s\}$.
%  If $r\in[0,1)$, we verify the inequality directly as follows
%\aln
%|f^ t\circ g^ t(y)&-f^ t\circ g^ t(x)|\leq  |f|_{\all,0}|g|_{1,0}^\all|y-x|^\all.
%\end{align*}
%We get \re{3p3-}.
Let $r=k+\all$  with $k\geq1$.
Let $y=G^ \la(x)$. Let $\pd^i u^\la (y)$ be a partial derivatives in $y$ of order $i$.
We know that $\pd^{k-j}\pd_\la^j(u^\la \circ G^\la )\in \cL P_{k,j}(\{u^\la \circ G^\la \};\{G^\la \})$, i.e. it is  a linear combination of functions $v^\la (x)$ of the form \re{vtx}. We can express $v^\la $ as
\ga\nonumber %
 v^\la  =(\pd_ \la^{j_0} \pd^{m} u^ \la)\circ G^\la  \prod_{1\leq i\leq m'}
\pd_ \la^{j_i}\pd^{k_i}\pd_\la G_{n_i}^\la \prod_{m'<i\leq m}
 \pd^{k_i}\pd  G_{n_i}^\la ,
% \label{3p5+}
\end{gather}
where  $ j=j_0+\dots+j_{m'}+m'$, and    by \re{vtx+}
we have $
j+m+\sum k_i\leq k.
$

When $m=0$, it is immediate that  by
  $|G|_{1}<2$, we obtain $|v|_{\all,\beta}\ple |u|_{\all,j+\beta}$. Suppose that $m\geq1$. Computing the H\"older norms, we get
\aln
 |v|_{\all,\beta}&\ple|\pd u|_{m-1+\all,j_0+\beta}\prod|\pd_\la G_{n_i}|_{k_i,j_i}\prod|\pd G_{n_i}|_{k_i,0}\\
 &\quad+\sum_\ell |\pd u|_{j_0+m-1,j_0}|\pd_\la G_{n_\ell}|_{m_\ell+\all,j_\ell+\beta}\prod_{i\neq\ell, i,\ell\leq m'}|\pd_\la G_{n_i}|_{k_i,j_i}\prod_{i>m'}|\pd G_{n_i}|_{k_i,0}\\
 &\quad+\sum_\ell |\pd u|_{j_0+m-1,j_0}|\pd G_{n_\ell}|_{m_\ell+\all,j_\ell+\beta}\prod_{i\leq m'}|\pd_\la G_{n_i}|_{k_i,j_i}\prod_{i\neq\ell, i,\ell> m'}|\pd G_{n_i}|_{k_i,0}.
 \end{align*}
We use \rl{ff-} and obtain
\aln
|v|_{\all,\beta}&\ple
  |\pd u|_{r-1,s} + |u|_{1,0}|G|_{r,s}+    Q_{r-1,s}(\pd u,\pd G)+   Q_{r-1,s-1}(\pd u,\pd_\la G) \\
 &  +
 |u|_{1,0}\bigl\{  Q_{r-1,s}(  \pd G, \pd G)+    Q_{r-1,s-1}(  \pd G, \pd_\la G)+   Q_{r-1,s-2}(  \pd_\la G, \pd_\la G)\bigr\}.\qedhere\nonumber
 %\\
% &\leq C_6(|f|_{r,s}+Q_{r,s}(\pd f,\tilde \pd g)+|f|_{1,0}(|g|_{r,s}+Q_{r,s}(\pd g,\tilde \pd g))).\nonumber
\end{align*}
%Here   summations are over  $(a,b)$ with $a=m-1+\all+k'+j'$ and $b=j'+\beta$.
%We have
%$$
%Q'_{r,s}(\pd_tg,\pd_tg)=\sum_{j\leq [s]-2} |\pd_tg|_{r-1-j,0}\odot|\pd_tg|_{0,j+\{s\}}.
%$$
%Note that $[r]-1-j\geq1$ and
%$$|\pd_tg|_{r-1-j,0}\odot|\pd_tg|_{0,j+\{s\}}\leq  |\pd_tg|_{\{r\},0}\odot|\pd_tg|_{0,j+\{s\}}+ |\pd\pd_tg|_{r-j-2,0}\odot|\pd_tg|_{0,j+\{s\}}.$$
%We have
%\al
%&|\pd\pd_tg|_{r-j-2,0}\odot|\pd_tg|_{0,j+\{s\}}\leq
%|\pd g|_{r-j-2,1}\odot|\pd_tg|_{0,j+\{s\}}\\
%&\qquad \leq |\pd g|_{r-j-2,0}\odot|\pd_tg|_{0,j+1+\{s\}}+|\pd g|_{r-j-2,1+j+\{s\}}\odot|\pd_tg|_{0,0}\\
%&\qquad \leq Q_{r,s}'(\pd g,\pd_tg)+C|g|_{r,s}.
%\end{align}
%Also $|\pd_tg|_{\{r\},0}\odot|\pd_tg|_{0,j+\{s\}}\leq
%|\pd\pd_tg|_{0,0}\odot|\pd_tg|_{0,j+\{s\}}+|\pd_tg|_{0,0}\odot|\pd_tg|_{0,j+\{s\}}$. The last terms in bounded by $C|g|_{r,s}$. Now
%$$
%|\pd\pd_tg|_{0,0}\odot|\pd_tg|_{0,j+\{s\}}\leq |\pd g|_{0,1}\odot|\pd_tg|_{0,j+\{s\}}\leq C|\tilde\pd g|_{0,0} |g|_{0,j+1+\{s\}}.
%$$
\end{proof}

Let $D_\rho=B_{\rho}^{2n}\times B_\rho^M\times B_\rho^L$ with $N=2n+K+L$, and set $|\cdot|_{\rho;r,s}=|\cdot|_{D_\rho;r,s}$ and $\tilde Q_{\rho,\tilde\rho;r,s}=\tilde Q_{D_\rho,D_{\tilde\rho};r,s}$.
\le{fg} Let $1\leq r<\infty$. Let    $0<\rho<\infty$, $ 0<\theta<1/2$ and $\rho_i=(1-\theta)^i\rho$.
Let $F^ \la=I+f^ \la$ be mappings from $D_\rho$ into $\rr^N$.
Assume that $F\in\cL C^{1,0}(
 \ov{D_{\rho}})$ and
\eq{ft0}\nonumber %
f^\la (0)=0, \quad |f|_{\rho;1,0}\leq \theta/{C_N}.
\eeq
\bppp \item Then $F^\la $ are injective in $D_\rho$.  There exist unique $G^ \la=I+g^ \la$ satisfying
\ga\nonumber
G^ \la\colon D_{\rho_1}
\to D_{\rho}, \quad F^ \la\circ G^ \la=I  \quad \text{on $D_{\rho_1}$}.
\end{gather} Furthermore,
$F^ \la\colon D_{\rho}\to D_{(1-\theta)^{-1}\rho}$ and $
G^ \la\circ F^ \la=I$  in $D_{ \rho_2}$.
\item Suppose that $1/4<\rho<2$.  Assume further that $F\in\cL C^{r,0}(
 {D_{\rho}})$. Then $\{g^\la \}\in \cL C^{r,0}(
 {D_{\rho_1}})$ and
\ga
\label{3p6}
|g|_{\rho_1;r,0}\leq C_r |f|_{\rho; r,0},
\\
|u\circ G|_{\rho_1;r,0}\leq C_r  (|u|_{\rho; r,0}+ | u|_{\rho;1,0}|f|_{\rho; r,0}).\label{3p6+} \nonumber %
%|f|_{0,0}^{\f{s}{r}}\bigl(|f|_{\rho; r,0}+|f|_{\rho; 0,s}\bigr)^{2-\f{s}{r}}\right). %+\sum_{0<s'<s}|f|_{r-s',0}|f|_{0,s'}).
\end{gather}
%\item  CUT
%Assume that $j\leq s$ and $j+k\leq r$. Then $\pd_0^{K}\pd_ t^j g^ t$ is a finite sum of
%\eq{}
%\{R(\pd_0^1f)\pd_0^{K_1}\pd_ t^{j_1}f_{n_1}\cdots\pd_0^{K_m}\pd_ t^{j_m}f_{n_m}\}\circ G^ t
%\eeq
%where $R(\pd_0^1f)$ is a rational function in $\pd_0^\la f$,  $|K_\ell|+j_\ell\geq1$, and
%\eq{}
%\sum(|K_\ell|+j_\ell-1)\leq|K|+j-1, \quad \sum j_\ell=j.
%\eeq
\item Suppose that $1/4<\rho<2$.
Assume further that  $r,s$ satisfy \rea{rsrs} with $  s\geq1$,  $|f|_{\rho;1 }<C$, and $f\in\cL C^{r,s}(
 {D_\rho})$.   Then for $u\in\cL C^{r,s}(D_\rho)$ %nd some $C_r=C_{r,\rho,1/\rho}$,
\ga\label{3p6a}
\|g\|_{\rho_1;r,s}\leq C_r  (\|f\|_{\rho; r,s}+ \tilde Q_{\rho,\rho;r,s}(\pd f, \tilde \pd f)+|f|_{ \rho;1,0}\tilde Q_{\rho,\rho;r,s}(\tilde \pd f,\tilde\pd f)),\\
\|u^\la \circ G^\la \|_{\rho_1;r,s}\leq C_r  (\|u\|_{\rho; r,s}+ \tilde Q_{\rho,\rho_1;r,s}(\pd u, \tilde \pd f)+|u|_{\rho;1,0}\tilde Q_{\rho,\rho;r,s}(\tilde \pd f,\tilde\pd f)).\label{3p6a+}
\end{gather}
\eppp
\ele
\begin{proof}We may  reduce the proof  to $\rho=1$ by using dilations $x\to\rho^{-1}F^\la (\rho x)$ and $x\to\rho^{-1} G^\la (\rho x)$.
The proof of (i) can be obtained easily by applying the
  contraction mapping theorem to
$$
g^\la (x)=-f^\la (x+g^\la (x)).
$$

(ii) is a special case of (iii). Thus, let us verify (iii).
Assume that $r>1$.
 Differentiating the above identity,  we separate terms of the highest order derivatives of $g^\la $ from the rest to get identities
% \al
% \pd_t^jg_l^\la +\sum_\ell(\pd_\ell f_l^\la )\circ G^\la \pd_t^jg_\ell=\sum_{   j_0>0}C_{jm'}( \pd_t^{j_0}\pd^Mf_l^\la )\circ G^\la \pd^{j_1}g^\la _{n_1}\cdots\pd_t^{j_{m'}}g^\la _{n_{m'}},
%\end{align}
\ga\label{kjt}\nonumber %
( \pd^K\pd_ \la^j) g_l^ \la+\sum_{m}(\pd_{y^{m}}f_l^ \la)\circ G^ \la(\pd^{K}\pd_ \la^j)g_{m}^\la =E_{lKj}^\la.
\end{gather}
Here $E_{lKj}^\la $ are the linear combinations of the functions in $\cL P_{|K|+j,j}(\{f^\la \circ G^\la \},\{g^\la \})$ of the form
\ga
( \pd^{K_0}\pd_ \la^{j_0}) f_l^ \la, \quad
v^\la:=
 %\sum_l(\pd_{y^{l}}f_l^ t)\circ G^ t( \pd^K\pd_ t^j)x^l +
 ( \pd^{M_0}\pd_ \la^{j_0 }f_l^ \la)\circ G^ \la\prod_{1\leq\ell\leq m} \pd^{M_\ell}\pd_ \la^{j_\ell}
g_{n_\ell}^ \la\label{skell-}
\end{gather}
with   $m\leq |M_0|$, $|M_\ell|+j_\ell\geq1$, and $
\sum j_\ell\leq j$. Furthermore,
\ga\label{skell}
 |M_\ell|+j_\ell<|K|+j, \quad \ell>0; \quad  \sum_{\ell\geq0} (|M_\ell|+j_\ell-1)<|K|+j.
\end{gather}
We now verify that  $\pd^{K}\pd_ \la^j g^ \la$  is a finite sum of
\eq{hKj}
h_{Kj}^\la:=\{A(\pd f)\pd^{K_1}\pd_ \la^{j_1}f_{n_1}\cdots\pd^{K_m}\pd_ \la^{j_m}f_{n_m}\}\circ G^ \la,
\eeq
where $A(\pd^1f)$ is a polynomial in $(\det(\pd f^\la ))^{-1}$ and $ \pd f^\la $. Moreover, $j_i\geq1$ or $|K_i|+j_i\geq2$, and
\eq{hKj+}
\sum_{\ell\geq0}( |K_\ell|+j_\ell-1) <|K|+j, \quad \sum j_\ell\leq j.
\eeq
The assertion is trivial when $|K|+j=1$. Assume that it holds for $|K|+j<N$. By \re{addp}, we can see that the $\{v^\la\}$ in \re{skell-} has the form \re{hKj}.
%Suppose that $|K|+j=N$. Then $E_{lKj}^\la (x)$, via a linear combination of \re{skell-},
%becomes a linear combination of
%$$
%w^\la (x):=\left\{A(\pd f)\pd^{M_0}\pd_ \la^{j_0 }f_l^ \la \prod_{1\leq\ell\leq|M_0|}\prod_{i\leq n_\ell}\pd_0^{K_\ell^i}\pd_ \la^{j_\ell^i}f_{n_\ell^i} \right\}\circ G^ \la
%$$
%with $\sum_i(|K_\ell^{k_i}+j_\ell^i-1)< |K_\ell|+j_\ell$.  We have
%$$
%(|M_0|+j_0-1)+\sum_{\ell>0}\sum_{i\leq n_\ell}(|K_\ell^i|+j_\ell^i-1)\leq  (|M_0|+j_0-1)+\sum_\ell(|K_\ell|+j_\ell-1),
%$$
%which is less than $|K|+j$ by \re{skell}.
 This shows that $v^\la $ is of the form \re{hKj} and \re{hKj+}.
%We have
%$$
%|h|_{\all,\beta}\leq C(|\pd f|_{\all,0}+|\pd f|_{0,\beta})\prod|f|_{|K_\ell|+\all,j_\ell} +|f|_{|K_1|,j_1+\beta})\prod|f|_{|K_\ell|,j_\ell}.
%$$
%  This gives us \re{3p6}.
The claim has been verified.

The estimation of $|\{h_{Kj}^\la \}|_{\{r\},\{s\}}$  is the same as in the proof of \rl{ff+}. Indeed, when $|M_0|=0$, $|\{h_{Kj}^\la \}|_{\{r\},\{s\}}\ple\|f\|_{r,s}$
for $|K|+j\leq [r]$ and $j\leq[s]$. When $|M_0|>0$,
$$
|\{h_{Kj}^\la \}|_{\{r\},\{s\}}\ple\tilde Q_{r,s}(\pd f,\tilde\pd f)+|f|_{1,0}\tilde Q_{r,s}(\tilde \pd f,\tilde\pd f).
$$
 This verifies \re{3p6a}.

To verify \re{3p6a+}, as in \re{skell-}-\re{skell} we note that $\pd^k\pd_\la^j(u^\la \circ G^\la (x))$ is a linear combination of functions in $\cL P_{k+j,j}(\{u^\la \circ G^\la \};\{G^\la \})$ of the form
\ga \label{utvG}
 %\sum_l(\pd_{y^{l}}f_l^ t)\circ G^ t( \pd^K\pd_ t^j)x^l +
\tilde v^\la = ( \pd^{M_0}\pd_ \la^{j_0 }u^ \la)\circ G^ \la\prod_{1\leq\ell\leq m} \pd^{M_\ell}\pd_ \la^{j_\ell}
g_{n_\ell}^ \la
\end{gather}
with   $m\leq |M_0|$, $|M_\ell|+j_\ell\geq1$,  $
\sum j_\ell\leq j$, and
\ga \label{vtvG+}
 |M_\ell|+j_\ell<k+j, \quad \ell>0; \quad  \sum_{\ell\geq0} (|M_\ell|+j_\ell-1)\leq k+j-1.
\end{gather}
Expressing  $\pd^{K}\pd_ \la^j g^ \la$ via linear combinations in \re{hKj} satisfying \re{hKj+} and applying them to  $\pd^{M_\ell}\pd_\la^{j_\ell}g^\la $ in \re{utvG}, we conclude that $\tilde v^\la $ and hence $\pd^K\pd_\la^jg^\la $ is a linear combination of
$$
\tilde h_{Kj}^\la :=
\left\{A(\pd f)\pd^{M_0}\pd_ \la^{j_0 }u^ \la \prod_{1\leq\ell\leq |M_0|}\prod_{i\leq n_\ell}\pd_0^{M_\ell}\pd_ \la^{j_\ell}f_{n_\ell^i} \right\}\circ G^\la
$$
where $M_\ell,j_\ell$ still satisfy \re{vtvG+}. When $|M_0|=0$, we actually have
$\tilde h_{Kj}^\la =
( \pd_ \la^{j }u^ \la) \circ G^\la $. It is straightforward that $|\tilde h_{Kj}|_{\all,\beta}\ple|u|_{r,s}$. When $|M_0|>0$, the estimate for $|\tilde h_{Kj}|_{\all,\beta}$ is obtained analogous to that of $h_{Kj}$.
\end{proof}

We will also need to deal with sequences of compositions.
\pr{seqint} Let $D_m$ be a sequence of domains in $\rr^d$ satisfying the cone property of which the constants $C_*(D_m)>1$ are bounded.
Assume that $D_m\subset D_\infty$ and $D_\infty$ also has the cone property. Let $F_m^\la =I+f_m^\la \colon D_m\to D_{m+1}$ for $m=1,\dots$.
Suppose that
$$
f_i^\la (0)=0,\quad
|f_i|_{D_i;1,0}\leq 1.
$$
Assume that $r\geq s\geq1$ and $\{r\}\geq\{s\}$.  Then
\al %\label{uFF}
\nonumber %
\|\{u^\la \circ F_\ell^\la \circ&\cdots\circ F_1^\la \}\|_{D_1;r,s}\leq C_r^{\ell}  \left\{\|u\|_{D_\infty;r,s}+\sum \tilde Q_{r,s}(\pd u,\tilde\pd f_m)\right.\\
&\left.\quad+|u|_{D_\infty;1,0}  \left(\sum \|f_m\|_{D_m;r,s}+\sum_{i\leq j} \tilde Q_{r,s}(\tilde\pd f_i,\tilde\pd f_j)\right)
\right\}.\nonumber
\end{align}
\epr
\begin{proof} We first derive the factor $C_r^\ell$ in the inequality. Let $G_i^\la = F_i^\la \circ\cdots\circ F_1^\la $.
Let $h_{\ell,k}^\la $ be a $k$-th order derivative of $u^\la \circ G_\ell^\la $.
 We express $h_{\ell,k}^\la $ as a sum of terms of the form
 \ga
%  \label{3p5s}
%h_1^\la (x):=(\pd_\la^{j}u^\la )\circ G^\la (x), \\
 h_2^\la   :=\tilde h_2^\la \{ (\pd_ \la^{j_0} \pd^{k_0} u^ \la)\circ G_\ell^\la \}\prod_{1\leq i\leq T'}
\pd_ \la^{j_i}\pd^{k_i}\pd_\la F_{n_i}^\la \prod_{T'<i\leq T}
 \pd^{k_i}\pd  F_{n_i}^\la .
 \label{3p5+s}
\end{gather}
Here $\sum k_i+\sum j_i\leq k-1$, $T'+\sum j_i\leq j$, and $\tilde h_2^\la$ is $1$ or  a product of first-order derivatives of $F_i^\la$ in $x$, which may be repeated. Let
 $T_{\ell,k}$ be the maximum number of derivative functions of $u^\la,F^\la_{\ell},\ldots, F^\la_1$ that appear
 in  \re{3p5+s}. We have $T_{\ell,0}=0$ and $T_{\ell,1}=\ell+1$. By the chain rule, $T_{\ell,k}\leq T_{\ell,k-1}+\ell\leq k\ell +1$.
Let $N_{\ell,k}$ be the maximum number of terms \re{3p5+s} that are needed to express $h_{\ell,k}^\la$ as a sum of the monomials  \re{3p5+s}.
 Note that $u^\la (x),F_i^\la (x)$ are functions in $d+1$ variables.
 By the chain rule $N_{\ell,1}\leq(d+1)^\ell$. By the product and chain rules,
$$
N_{\ell,k}\leq N_{\ell,k-1}T_{\ell,k-1}(d+1)^\ell \leq (k\ell+1)(d+1)^{\ell} N_{\ell,k-1}<C_k^\ell.
$$

For a fixed $i$, the first-order derivatives of $F_i^\la$ in $x$ cannot repeated more than $k$ times in $h_2^\la$.
Thus, we have
$$
|\tilde h_2^\la(x)|\leq(1+|f_1|_{1,0})^k\cdots (1+|f_\ell|_{1,0})^k<2^{\ell k}.
$$
Also the $T$ in \re{3p5+s} is less than $k$.
 By \rl{ff}, we have
$$
|h_2|_{\all,\beta} \leq  N_{k+1}2^{\ell k} \left\{\|u\|_{r,s}+\sum Q_{r,s}(u,f_i)
+|u|_{1,0} \bigl(\sum |f_i|_{r,s}+\sum Q_{r,s}(f_i,f_j)\bigr)\right\}.
$$
Here $N_{k+1}$ arises from the number of terms when computing the H\"older norms after $k$ derivatives.
\end{proof}

Next, we consolidate the expressions such as  $\tilde Q_{r,s}(\pd f,\tilde\pd g)$ in \rl{fg} and \rp{seqint} by a
 simpler expression    $Q_{r,s}(f,g)$, defined by \re{qrsfg},  and the new expression
\eq{qprs}\nonumber %
Q'_{r,s}(f,g):= Q_{r,s}(f,g)+\sum_{j=1}^{[s]}Q^*_{r-j+1,j+\{s\}}(f,g), \quad s\geq1.
\eeq
\le{qptpt} % Let $s\geq2$ and $r\geq s$. % Assume that $|(f,g)|_{1,1}<1$.
Let $f\in \cL C^{r,s}(D),g\in \cL C^{r,s}(D')$ with $D,D'$ having the cone property.
%Let the norms of $f$ $($resp.~$g)$ be defined on $D$ $($resp.~$D')$.
 Then
\al\label{qr-1s}
Q_{r-1,s}(\pd f,g)&\leq C_r\hat Q_{r,s}(f,g),\quad
%\label{qr-1s-1}
Q_{r-1,s-1}(\pd_\la f,g)\leq C_r\hat Q_{r,s}(f,g),\\
\label{r-1dd}
Q_{r-1,s}(\pd f,\pd g)&\leq C_r\left\{ |f|_{1,\{s\}}\odot\|g\|_{r,0}+|f|_{1,0}\|g\|_{r,s}\right. \\
&\nonumber \quad %\hspace{30ex}
\left.+|g|_{1,\{s\}}\odot|f|_{r,0}+|g|_{1,0}|f|_{r,s}+Q'_{r,s}(f,g)\right\},\\
\label{qr-1ddt}
Q_{r-1,s-1}(\pd f,\pd_\la g)&\leq C_r\left\{|f|_{1,\{s\}}\odot|g|_{r-1,1}+Q'_{r,s}(f,g)\right\},
\ 1\leq s<2\leq r,\\
\label{qr-1ddt+}
Q_{r-1,s-1}(\pd f,\pd_\la g)&\leq C_r\left\{ |f|_{1,\{s\}}\odot|g|_{r-1,1}+|f|_{0,1}\|g\|_{r,s-1}\right.\\
%\hspace{14ex}\nonumber \qquad\
&\quad +\left.|g|_{0,1}\|f\|_{r,s-1}+Q'_{r,s}(f,g)\right\},\ s\geq1,\nonumber
\\
%Q_{r-1,s-2}(\pd_tf,\pd_tg)\leq Q_{r,s}'(\pd f,\tilde\pd g)+Q_{r,s}'(\pd g,\tilde \pd f)+|f|_{\{r\},1}\odot|g|_{0,s}+|g|_{\{r\},1}|f|_{0,s},\\
\label{qr-1dtdt}
Q_{r-1,s-2}(\pd_\la f,\pd_\la g)&\leq  C_r\left\{ |f|_{0,1}\|g\|_{r,s-1}+|g|_{0,1}\|f\|_{r,s-1}\right.\\ &\quad\left. +
|f|_{\{r\},s}|g|_{0,0}+|g|_{\{r\},s}|f|_{0,0}+Q_{r,s}'(f,g)\right\}.\nonumber
\end{align}
Here the norms of $f$ {\rm(}resp. $g${\rm)} and its derivatives are in $D$ {\rm(}resp. $D'${\rm)}.
\ele
\begin{proof} We verify the first inequality in \re{qr-1s} by $|\pd f|_{r-1-j,0}\odot|g|_{0,j+\{s\}}\leq Q_{r,s}(f,g)$ and
$$
|\pd f|_{0,j+\{s\}}\odot|g|_{r-1-j,0}\ple |f|_{0,j+\{s\}}\odot|g|_{r-j,0}+|f|_{r-j,j+\{s\}}|g|_{0,0}\ple \hat Q_{r,s}(f,g).
$$
Here the first inequality follows from \re{ff-b}.
The second inequality in \re{qr-1s}  follows from
$
|g|_{r-1-j,0}|\pd_\la f|_{0,j+\{s\}}\leq Q_{r,s}(f,g)$ and
\gan|g|_{0,j+\{s\}}|\pd_\la f|_{r-1-j,0}\ple |g|_{0,j+1+\{s\}}|f|_{r-1-j,0}+|g|_{0,0}|f|_{r-1-j,j+1+\{s\}}\ple \hat Q_{r,s}(f,g).
\end{gather*}

We verify \re{r-1dd},  by $|\pd f|_{r-1,0}\odot|\pd g|_{0,\{s\}}\leq |f|_{r,0}\odot|g|_{1,\{s\}}$ and for $j>0$
$$
|\pd f|_{r-1-j,0}\odot|\pd g|_{0,j+\{s\}}\leq |\pd f|_{r-j,0}\odot|g|_{0,j+\{s\}}+|\pd f|_{0,0}|g|_{r-j,j+\{s\}}.
$$
%for $j>0$.

Inequality\re{qr-1ddt} follows from $|\pd f|_{r-j-1,0}\odot|\pd_tg|_{0,j+\{s\}}\leq Q'_{r,s}(f,g)$ and $|\pd f|_{0,\{s\}}\odot|\pd_tg|_{r-1,0}\leq |f|_{1,\{s\}}\odot|g|_{r-1,1}$. For \re{qr-1ddt+}, we need to verify extra terms. Suppose  $0<j<[s]$. Then
\aln
|\pd f|_{0,j+\{s\}}\odot|\pd_tg|_{r-j-1,0}&\ple |f|_{0,j+\{s\}}\odot|\pd_tg|_{r-j,0}+|f|_{r-j,j+\{s\}}|g|_{0,1}
\end{align*}
and $
|f|_{0,j+\{s\}}\odot|\pd_tg|_{r-j,0}\ple |f|_{0,j+1+\{s\}}\odot|g|_{r-j,0}+|f|_{0,1}|g|_{r-j,j+\{s\}}.
$

We verify \re{qr-1dtdt} by $|\pd_tf|_{\{r\},0}\odot|\pd_tg|_{0,j+\{s\}}\ple|f|_{\{r\},0}\odot|g|_{0,s}+|f|_{\{r\},s}|g|_{0,0}$ and
\begin{equation*}
|\pd\pd_tf|_{r-j-2,0}\odot|\pd_tg|_{0,j+\{s\}}\ple|\pd f|_{r-j-2,0}\odot|g|_{0,j+2+\{s\}}+\| f\|_{r,s-1}|g|_{0,1}.\qedhere
\end{equation*}
\end{proof}

We now state simple version of the above inequalities, which suffices   applications in this paper. The next two propositions are immediate consequences of \rl{fg}, \rp{seqint}, \rl{qptpt}, and the following  crude estimate
\ga\label{simpleTQ}\nonumber %
\tilde Q_{r,s}(\tilde\pd f,\tilde\pd g)\leq C(|f|_{1,s_*}\|g\|_{r,s}+|g|_{1,s_*}\|f\|_{r,s}\\
\hspace{26ex}+\|f\|_{s+1,s}\odot\|g\|_{r,s_*}+\|g\|_{s+1,s}\odot\|f\|_{r,s_*}).\nonumber
\end{gather}
% Define
%\ga
%\hat Q'_{r,s}(f,g)=|f|_{1+\{s\},\{s\}}|g|_{r,s}+|f|_1|g|_{r,s-1}+
%|g|_{1+\{s\},\{s\}}|f|_{r,s}\\
%+|g|_1|f|_{r,s-1}+Q'_{r,s}(f,g).
%\end{gather}
\pr{pgrsf}
Let $s_*=0$ for $s=0$ and $s_*=1$ for $s\geq1$. Let $F^\la =I+f^\la $ and $(F^\la )^{-1}=I+g^\la $ be as in \rla{fg}.
Assume that
$$
f^\la (0)=0, \quad |f|_{1,0}\leq\theta/C_N, \quad |f|_{1,s_*}<1.
$$
Let $\rho_1=(1-\theta)\rho$ with $1/4<\rho<2$ and $0<\theta<1/2$. Then
\gan %\label{grsf}
\|g\|_{\rho_1;r,s}\leq C_r\left\{\|f\|_{\rho;r,s}+ \|f\|_{s+1,s}\odot\|f\|_{r,s_*}\right\},\\
\label{uF-1}\|\{u^\la \circ (F^\la )^{-1}\}\|_{\rho_1;r,s}\leq C_r\left\{\|u\|_{\rho;r,s}+|u|_{\rho;1,s_*} \|f\|_{\rho;r,s}%+|u |_{\rho;1,s_*} \|f\|_{\rho;r,s-s_*}
\right.\\
  \hspace{8ex}\left.+\|u\|_{s+1,s}\odot\|f\|_{r,s_*}+\|u\|_{r,s_*}\odot\|f\|_{s+1,s}+|u|_{\rho;1,0}\|f\|_{s+1,s}\odot\|f\|_{r,s_*} \right\}.\nonumber
\end{gather*}
\epr
%\begin{proof}By \re{3p6a} and \rl{qptpt}, we have
%\aln
%|u\circ F^{-1}|_{\rho_1;r,s}&\ple (|u|_{\rho; r,s}+ \tilde Q_{r,s}(\pd u, \tilde \pd f)+|u|_{1,0}\tilde Q_{r,s}(\tilde \pd f,\tilde\pd f))\\
%&\ple |u|_{r,s}+Q'_{r,s}(u,f)+|u|_{1,\{s\}}\odot|f|_{r,s}
%+|f|_{1,\{s\}}\odot|u|_{r,s}\\
%&\quad +|u|_{0,s_*}|f|_{r,s-s_*}+|f|_{0,s_*}|u|_{r,s-s_*}\\
%&\quad+|u|_{1,0}(|f|_{1,\{s\}}\odot|f|_{r,s}+Q'_{r,s}(f,f)+ |f|_{0,s_*}|f|_{r,s-s_*}).
%\end{align*}
%By analogue, we verify \re{grsf} by \re{3p6a} and \rl{qptpt}.
%\end{proof}
\pr{uFF} Let $r,s,s_*$ be as in \rpa{pgrsf}. Let $F_i^\la =I+f_i^\la$ map $ D_i$ into $ D_{i+1}$, where $D_\ell$ are as in \rpa{seqint}. Assume that
$f_i^\la (0)=0$ and $
  |f_i|_{1,s_*} \leq1.
$
Then
 \aln
&\|\{u^\la \circ F_m^\la \circ\cdots\circ F_1^\la \}\|_{D_1;r,s}\leq C_r^{m}  \left\{\|u\|_{r,s}+ \|u\|_{1,0} \sum_{i\leq j} \|f_i\|_{s+1,s}\odot\|f_j\|_{r,s_*} \right.\\
&\qquad +\left.
\sum_i|u|_{1,s_*} \|f_i\|_{r,s}
+\|u\|_{s+1,s}\odot\|f_i\|_{r,s_*}+\|u\|_{r,s_*}\odot\|f_i\|_{s+1,s}\right\},
\end{align*}
where   $\|f_i\|_{a,b}=\|f_i\|_{D_i;a,b}$ and $\|u\|_{a,b}=\|u\|_{D_{m+1};a,b}$.
\epr

\begin{bibdiv}
\begin{biblist}%*{labels={alphabetic}}

\bib{Be57}{book}{
author={Bers, L.},
title={Riemann Surfaces},
series={(mimeographed lecture notes) (1957-1958)},
publisher={New York University},
%date={},
 }

\bib{BG14}{article}{
   author={Bertrand, F.},
   author={Gong, X.},
   title={Dirichlet and Neumann problems for planar domains with parameter},
   journal={Trans. Amer. Math. Soc.},
   volume={366},
   date={2014},
   number={1},
   pages={159--217},
%   issn={0002-9947},
%   review={\MR{3118395}},
%   doi={10.1090/S0002-9947-2013-05951-X},
}

\bib{BGR14%MR3215549
}{article}{
   author={Bertrand, F.},
   author={Gong, X.},
   author={Rosay, J.-P.},
   title={Common boundary values of holomorphic functions for two-sided
   complex structures},
   journal={Michigan Math. J.},
   volume={63},
   date={2014},
   number={2},
   pages={293--332},
%   issn={0026-2285},
%   review={\MR{3215549}},
%   doi={10.1307/mmj/1401973052},
}

\bib{CS01}{book}{
   author={Chen, S.-C.},
   author={Shaw, M.-C.},
   title={Partial differential equations in several complex variables},
   series={AMS/IP Studies in Advanced Mathematics},
   volume={19},
   publisher={American Mathematical Society, Providence, RI; International
   Press, Boston, MA},
   date={2001},
  % pages={xii+380},
 %  isbn={0-8218-1062-6},
 %  review={\MR{1800297% (2001m:32071) }},
}

\bib{Ch55}%MR0074856}
{article}{
   author={Chern, S.-s.},
   title={An elementary proof of the existence of isothermal parameters on a
   surface},
   journal={Proc. Amer. Math. Soc.},
   volume={6},
   date={1955},
   pages={771--782},
%   issn={0002-9939},
%   review={\MR{0074856}},
}

%\bib{DT91}{article}{
%   author={Diederich, K.},
%   author={Ohsawa, T.},
%   title={On the parameter dependence of solutions to the
%   $\overline\partial$-equation},
%   journal={Math. Ann.},
%   volume={289},
%   date={1991},
%   number={4},
%   pages={581--587},
% %  issn={0025-5831},
%%   review={\MR{1103037}}, % (92f:32003)}},
%%   doi={10.1007/BF01446590},
%}

%\bib{Do56}{article}{
%   author={Dolbeault, P.},
%   title={Formes diff\'erentielles et cohomologie sur une vari\'et\'e
%   analytique complexe. I},
%  % language={French},
%   journal={Ann. of Math. (2)},
%   volume={64},
%   date={1956},
%   pages={83--130},
% %  issn={0003-486X},
%%   review={\MR{0083166}}, % (18,670e)}},
%}

\bib{Fr77}{article}{
   author={Frobenius, G.},
   title={Ueber das Pfaffsche Problem},
%   language={German},
   journal={J. Reine Angew. Math.},
   volume={82},
   date={1877},
   pages={230--315},
%   issn={0075-4102},
%   review={\MR{1579710}},
%   doi={10.1515/crll.1877.82.230},
}

\bib{GT01}{book}{
   author={Gilbarg, D.},
   author={Trudinger, N. S.},
   title={Elliptic partial differential equations of second order},
   series={Classics in Mathematics},
   note={Reprint of the 1998 edition},
   publisher={Springer-Verlag, Berlin},
   date={2001},
 %  pages={xiv+517},
 %  isbn={3-540-41160-7},
%   review={\MR{1814364}}, % (2001k:35004)}},
}

\bib{GK14}{article}{
author={Gong, X.},
author={Kim, K.},
title={The $\overline{\partial}$-equation on variable strictly pseudoconvex domains},
journal={Math. Z.},
note={to appear},
}

\bib{GW10}%}
{article}{
   author={Gong, X.},
   author={Webster, S. M.},
   title={Regularity for the CR vector bundle problem I},
   journal={Pure Appl. Math. Q.},
   volume={6},
   date={2010},
   number={4, Special Issue: In honor of Joseph J. Kohn.},
   pages={983--998},
%   issn={1558-8599},
%   review={\MR{2742034}},
%   doi={10.4310/PAMQ.2010.v6.n4.a1},
}

\bib{GW11}%MR2829316}
{article}{
   author={Gong, X.},
   author={Webster, S. M.},
   title={Regularity for the CR vector bundle problem II},
   journal={Ann. Sc. Norm. Super. Pisa Cl. Sci. (5)},
   volume={10},
   date={2011},
   number={1},
   pages={129--191},
%   issn={0391-173X},
%   review={\MR{2829316}},
}

\bib{GW12}%MR2868966}
{article}{
   author={Gong, X.},
   author={Webster, S. M.},
   title={Regularity in the local CR embedding problem},
   journal={J. Geom. Anal.},
   volume={22},
   date={2012},
   number={1},
   pages={261--293},
%   issn={1050-6926},
%   review={\MR{2868966}},
%   doi={10.1007/s12220-010-9192-6},
}

%\bib{Gr58}{article}{
%   author={Grauert, H.},
%   title={On Levi's problem and the imbedding of real-analytic manifolds},
%   journal={Ann. of Math. (2)},
%   volume={68},
%   date={1958},
%   pages={460--472},
%  % issn={0003-486X},
%%   review={\MR{0098847}}, % (20 \#5299)}},
%}

%\bib{GL70}{article}{
%   author={Grauert, H.},
%   author={Lieb, I.},
%   title={Das Ramirezsche Integral und die L\"osung der Gleichung $\bar
%   \partial f=\alpha $ im Bereich der beschr\"ankten Formen},
%   language={German},
%   journal={Rice Univ. Studies},
%   volume={56},
%   date={1970},
%   number={2},
%   pages={29--50 (1971)},
% %  issn={0035-4996},
%%   review={\MR{0273057}}, % (42 \#7938)}},
%}

%\bib{GK82}{article}{
%   author={Greene, R.~E.},
%   author={Krantz, S.~G.},
%   title={Deformation of complex structures, estimates for the $\bar
%   \partial $\ equation, and stability of the Bergman kernel},
%   journal={Adv. in Math.},
%   volume={43},
%   date={1982},
%   number={1},
%   pages={1--86},
% %  issn={0001-8708},
%%   review={\MR{644667}}, % (84b:32026)}},
%%   doi={10.1016/0001-8708(82)90028-7},
%}

\bib{Gu62}%MR0137005}
{article}{
   author={Guggenheimer, H.},
   title={A simple proof of Frobenius's integration theorem},
   journal={Proc. Amer. Math. Soc.},
   volume={13},
   date={1962},
   pages={24--28},
%   issn={0002-9939},
%   review={\MR{0137005}},
}

%\bib{Ha77}{article}{
%   author={Hamilton, R. S.},
%   title={Deformation of complex structures on manifolds with boundary. I.
%   The stable case},
%   journal={J. Differential Geometry},
%   volume={12},
%   date={1977},
%   number={1},
%   pages={1--45},
%%   issn={0022-040X},
%%   review={\MR{0477158}}, %(57 \#16701)}},
%}

%\bib{Ha79}{article}{
%   author={Hamilton, R. S.},
%   title={Deformation of complex structures on manifolds with boundary. II.
%   Families of noncoercive boundary value problems},
%   journal={J. Differential Geom.},
%   volume={14},
%   date={1979},
%   number={3},
%   pages={409--473 (1980)},
% %  issn={0022-040X},
% %  review={\MR{594711}}, % (82e:32035)}},
%}

\bib{HJ97}{article}{
   author={Hanges, N.},
   author={Jacobowitz, H.},
   title={The Euclidean elliptic complex},
   journal={Indiana Univ. Math. J.},
   volume={46},
   date={1997},
   number={3},
   pages={753--770},
 %  issn={0022-2518},
%   review={\MR{1488336}},
%   doi={10.1512/iumj.1997.46.1397},
}
\bib{Ha13}{book}{
   author={Hawkins, T.},
   title={The mathematics of Frobenius in context},
   series={Sources and Studies in the History of Mathematics and Physical
   Sciences},
   note={A journey through 18th to 20th century mathematics},
   publisher={Springer, New York},
   date={2013},
 %  pages={xiii+699},
%   isbn={978-1-4614-6332-0},
%   isbn={978-1-4614-6333-7},
%   review={\MR{3099749}},
%   doi={10.1007/978-1-4614-6333-7},
}

%\bib{He70}{article}{
%   author={Henkin, G. M.},
%   title={Integral representation of functions in strongly pseudoconvex
%   regions, and applications to the $\overline \partial $-problem},
%   language={Russian},
%   journal={Mat. Sb. (N.S.)},
%   volume={82 (124)},
%   date={1970},
%   pages={300--308},
% %  review={\MR{0265625% (42 \#534)  }},
%}
%
%\bib{HL84}{book}{
%   author={Henkin, G. M.},
%   author={Leiterer, J.},
%   title={Theory of functions on complex manifolds},
%   series={Monographs in Mathematics},
%   volume={79},
%   publisher={Birkh\"auser Verlag, Basel},
%   date={1984},
%   pages={226},
% %  isbn={3-7643-1477-8},
%%   review={\MR{774049% (86a:32002) }},
%}

\bib{HT03%MR1967042
}{article}{
   author={Hill, C. D.},
   author={Taylor, M.},
   title={Integrability of rough almost complex structures},
   journal={J. Geom. Anal.},
   volume={13},
   date={2003},
   number={1},
   pages={163--172},
%   issn={1050-6926},
%   review={\MR{1967042}},
%   doi={10.1007/BF02931002},
}

\bib{HT07}%MR2247892}
{article}{
   author={Hill, C. D.},
   author={Taylor, M.},
   title={The complex Frobenius theorem for rough involutive structures},
   journal={Trans. Amer. Math. Soc.},
   volume={359},
   date={2007},
   number={1},
   pages={293--322% (electronic)
   },
   issn={0002-9947},
 %  review={\MR{2247892}},
%   doi={10.1090/S0002-9947-06-04067-0},
}

\bib{Ho65}%MR0178222}
{article}{
   author={H{\"o}rmander, L.},
   title={The Frobenius-Nirenberg theorem},
   journal={Ark. Mat.},
   volume={5},
   date={1965},
   pages={425--432 (1965)},
%   issn={0004-2080},
%   review={\MR{0178222}},
}

%\bib{Ho65}{article}{
%   author={H{\"o}rmander, L.},
%   title={$L^{2}$ estimates and existence theorems for the $\bar \partial
%   $\ operator},
%   journal={Acta Math.},
%   volume={113},
%   date={1965},
%   pages={89--152},
% %  issn={0001-5962},
%%   review={\MR{0179443}}, % (31 \#3691)}},
%}

\bib{Ho76}%MR0602181}
{article}{
   author={H{\"o}rmander, L.},
   title={The boundary problems of physical geodesy},
   journal={Arch. Rational Mech. Anal.},
   volume={62},
   date={1976},
   number={1},
   pages={1--52},
%   issn={0003-9527},
%   review={\MR{0602181}},
}

%\bib{Ho90}{book}{
%   author={H{\"o}rmander, L.},
%   title={An introduction to complex analysis in several variables},
%   series={North-Holland Mathematical Library},
%   volume={7},
%   edition={3},
%   publisher={North-Holland Publishing Co., Amsterdam},
%   date={1990},
%%   pages={xii+254},
%%   isbn={0-444-88446-7},
%%   review={\MR{1045639% (91a:32001)   }},
%}

%\bib{Ke71}{article}{
%   author={Kerzman, N.},
%   title={H\"older and $L^{p}$ estimates for solutions of $\bar \partial
%   u=f$ in strongly pseudoconvex domains},
%   journal={Comm. Pure Appl. Math.},
%   volume={24},
%   date={1971},
%   pages={301--379},
%  % issn={0010-3640},
% %  review={\MR{0281944}}, % (43 \#7658)}},
%}

%\bib{KS60}{article}{
%   author={Kodaira, K.},
%   author={Spencer, D. C.},
%   title={On deformations of complex analytic structures. III. Stability
%   theorems for complex structures},
%   journal={Ann. of Math. (2)},
%   volume={71},
%   date={1960},
%   pages={43--76},
% %  issn={0003-486X},
% %  review={\MR{0115189}}, %(22 \#5991)}},
%}

\bib{KS61}%MR0148086}
{article}{
   author={Kodaira, K.},
   author={Spencer, D. C.},
   title={Multifoliate structures},
   journal={Ann. of Math. (2)},
   volume={74},
   date={1961},
   pages={52--100},
%   issn={0003-486X},
%   review={\MR{0148086}},
}

\bib{Ko63}{article}{
   author={Kohn, J.J.},
   title={Harmonic integrals on strongly pseudo-convex manifolds. I},
   journal={Ann. of Math. (2)},
   volume={78},
   date={1963},
   pages={112--148},
%   issn={0003-486X},
%   review={\MR{0153030% (27 \#2999) }},
}

%\bib{Ko73}{article}{
%   author={Kohn, J. J.},
%   title={Global regularity for $\bar \partial $ on weakly pseudo-convex
%   manifolds},
%   journal={Trans. Amer. Math. Soc.},
%   volume={181},
%   date={1973},
%   pages={273--292},
%%   issn={0002-9947},
% %  review={\MR{0344703% (49 \#9442)  }},
%}

%\bib{Li70}{article}{
%   author={Lieb, I.},
%   title={Die Cauchy-Riemannschen Differentialgleichungen auf streng
%   pseudokonvexen Gebieten. Beschr\"ankte L\"osungen},
%   language={German},
%   journal={Math. Ann.},
%   volume={190},
%   date={1970/1971},
%   pages={6--44},
%%   issn={0025-5831},
%%   review={\MR{0283235 (44 \#468)}},
%}

%\bib{LR80}{article}{
%   author={Lieb, I.},
%   author={Range, R. M.},
%   title={L\"osungsoperatoren f\"ur den Cauchy-Riemann-Komplex mit ${\mathcal
%   C}^{k}$-Absch\"atzungen},
%   language={German},
%   journal={Math. Ann.},
%   volume={253},
%   date={1980},
%   number={2},
%   pages={145--164},
%%   issn={0025-5831},
%%   review={\MR{597825% (82d:32012)   }},
%%   doi={10.1007/BF01578911},
%}

\bib{MM94}{article}{
   author={Ma, L.},
   author={Michel, J.},
   title={Regularity of local embeddings of strictly pseudoconvex CR
   structures},
   journal={J. Reine Angew. Math.},
   volume={447},
   date={1994},
   pages={147--164},
%   issn={0075-4102},
%   review={\MR{1263172}},
%   doi={10.1515/crll.1994.447.147},
}

\bib{Ma69}%MR0253383}
{article}{
   author={Malgrange, B.},
   title={Sur l'int\'egrabilit\'e des structures presque-complexes},
%   language={French},
   conference={
      title={Symposia Mathematica, Vol. II},
      address={INDAM, Rome},
      date={1968},
   },
   book={
      publisher={Academic Press, London},
   },
   date={1969},
   pages={289--296},
 %  review={\MR{0253383}},
}

%\bib{Mi91}{article}{
%   author={Michel, J.},
%   title={Integral representations on weakly pseudoconvex domains},
%   journal={Math. Z.},
%   volume={208},
%   date={1991},
%   number={3},
%   pages={437--462},
%%   issn={0025-5874},
%%   review={\MR{1134587}}, % (93a:32005)}},
%%   doi={10.1007/BF02571538},
%}

\bib{Mo62}%MR0147741}
{article}{
   author={Moser, J.},
   title={On invariant curves of area-preserving mappings of an annulus},
   journal={Nachr. Akad. Wiss. G\"ottingen Math.-Phys. Kl. II},
   volume={1962},
   date={1962},
   pages={1--20},
%   issn={0065-5295},
%   review={\MR{0147741}},
}

\bib{Na73}%MR832683}
{book}{
   author={Narasimhan, R.},
   title={Analysis on real and complex manifolds},
   series={North-Holland Mathematical Library},
   volume={35},
   note={Reprint of the 1973 edition},
   publisher={North-Holland Publishing Co., Amsterdam},
   date={1985},
   pages={xiv+246},
%   isbn={0-444-87776-2},
%   review={\MR{832683}},
}

\bib{NN73}%MR0346098}
{book}{
   author={Nevanlinna, F.},
   author={Nevanlinna, R.},
   title={Absolute analysis},
   note={Translated from the German by Phillip Emig;
   Die Grundlehren der mathematischen Wissenschaften, Band 102},
   publisher={Springer-Verlag, New York-Heidelberg},
   date={1973},
%   pages={vi+270},
%   review={\MR{0346098}},
}

\bib{NN57}%MR0088770}
{article}{
   author={Newlander, A.},
   author={Nirenberg, L.},
   title={Complex analytic coordinates in almost complex manifolds},
   journal={Ann. of Math. (2)},
   volume={65},
   date={1957},
   pages={391--404},
%   issn={0003-486X},
%   review={\MR{0088770}},
}

\bib{NW63}%MR0149505}
{article}{
   author={Nijenhuis, A.},
   author={Woolf, W. B.},
   title={Some integration problems in almost-complex and complex manifolds},
   journal={Ann. of Math. (2)},
   volume={77},
   date={1963},
   pages={424--489},
%   issn={0003-486X},
%   review={\MR{0149505}},
}

\bib{Ni57}
{article}{
   author={Nirenberg, L.},
   title={A complex Frobenius theorem},
   journal={ Seminars on Analytic Functions I},
   date={1957},
note={Institute for Advanced Study, Princeton},
page={172-189},
}

%\bib{Ov71}{article}{
%   author={{\O}vrelid, N.},
%   title={Integral representation formulas and $L^{p}$-estimates for the
%   $\bar \partial $-equation},
%   journal={Math. Scand.},
%   volume={29},
%   date={1971},
%   pages={137--160},
%%   issn={0025-5521},
%%   review={\MR{0324073 (48 \#2425)}},
%}

%\bib{Ra69}{article}{
%   author={Ram{\'{\i}}rez de Arellano, E.},
%   title={Ein Divisionsproblem und Randintegraldarstellungen in der
%   komplexen Analysis},
%   language={German},
%   journal={Math. Ann.},
%   volume={184},
%   date={1969/1970},
%   pages={172--187},
%%   issn={0025-5831},
%%   review={\MR{0269874}}, % (42 \#4767)}},
%   }

%\bib{Ra86}{book}{
%   author={Range, R. M.},
%   title={Holomorphic functions and integral representations in several
%   complex variables},
%   series={Graduate Texts in Mathematics},
%   volume={108},
%   publisher={Springer-Verlag, New York},
%   date={1986},
%   pages={xx+386},
%%   isbn={0-387-96259-X},
%%   review={\MR{847923% (87i:32001)  }},
%%   doi={10.1007/978-1-4757-1918-5},
%}

\bib{Se64}{article}{
   author={Seeley, R. T.},
   title={Extension of $C^{\infty }$ functions defined in a half space},
   journal={Proc. Amer. Math. Soc.},
   volume={15},
   date={1964},
   pages={625--626},
%   issn={0002-9939},
%   review={\MR{0165392% (29 \#2676) }},
}

%\bib{Si74}{article}{
%   author={Siu, Y.-T.},
%   title={The $\bar \partial $ problem with uniform bounds on derivatives},
%   journal={Math. Ann.},
%   volume={207},
%   date={1974},
%   pages={163--176},
%%   issn={0025-5831},
%%   review={\MR{0330515% (48 \#8852)}},
%}

%\bib{Ve62}{book}{
%   author={Vekua, I. N.},
%   title={Generalized analytic functions},
%   publisher={Pergamon Press, London-Paris-Frankfurt; Addison-Wesley
%   Publishing Co., Inc., Reading, Mass.},
%   date={1962},
%   pages={xxix+668},
%   review={\MR{0150320 (27 \#321)}},
%}

\bib{Tr92}%MR1200459}
{book}{
   author={Tr{e}ves, F.},
   title={Hypo-analytic structures, Local theory},
   series={Princeton Mathematical Series},
   volume={40},
  % note={Local theory},
   publisher={Princeton University Press, Princeton, NJ},
   date={1992},
%   pages={xviii+497},
%   isbn={0-691-08744-X},
%   review={\MR{1200459}},
}

\bib{We89}{article}{
   author={Webster, S. M.},
   title={A new proof of the Newlander-Nirenberg theorem},
   journal={Math. Z.},
   volume={201},
   date={1989},
   number={3},
   pages={303--316},
%   issn={0025-5874},
%   review={\MR{999729 (90f:32012)}},
%   doi={10.1007/BF01214897},
}

\bib{We89a}{article}{
   author={Webster, S. M.},
   title={On the proof of Kuranishi's embedding theorem},
  % language={English, with French summary},
   journal={Ann. Inst. H. Poincar\'e Anal. Non Lin\'eaire},
   volume={6},
   date={1989},
   number={3},
   pages={183--207},
%   issn={0294-1449},
%   review={\MR{995504}},
}

%\bib{We97}%MR1476254}
%{article}{
%   author={Webster, S. M.},
%   title={A complex Frobenius problem},
%   conference={
%      title={CR-geometry and overdetermined systems},
%      address={Osaka},
%      date={1994},
%   },
%   book={
%      series={Adv. Stud. Pure Math.},
%      volume={25},
%      publisher={Math. Soc. Japan, Tokyo},
%   },
%   date={1997},
%   pages={406--418},
%%   review={\MR{1476254}},
%}
%%QA649 C7 1997

\bib{We91}%MR1128608}
{article}{
   author={Webster, S. M.},
   title={The integrability problem for CR vector bundles},
   conference={
      title={Several complex variables and complex geometry, Part 3 (Santa
      Cruz, CA, 1989)},
   },
   book={
      series={Proc. Sympos. Pure Math.},
      volume={52},
      publisher={Amer. Math. Soc., Providence, RI},
   },
   date={1991},
   pages={355--368},
 %  review={\MR{1128608}},
}

\end{biblist}
\end{bibdiv}

%\end{thebibliography}

\end{document}